\title{Logarithmic cohomological field theories}
\numberwithin{equation}{subsection}
\let\oref\ref
\let\tilde\widetilde
\newcommand*{\doublerightarrow}[2]{\mathrel{
  \settowidth{\@tempdima}{$\scriptstyle#1$}
  \settowidth{\@tempdimb}{$\scriptstyle#2$}
  \ifdim\@tempdimb>\@tempdima \@tempdima=\@tempdimb\fi
  \mathop{\vcenter{
    \offinterlineskip\ialign{\hbox to\dimexpr\@tempdima+1em{##}\cr
    \rightarrowfill\cr\noalign{\kern.5ex}
    \rightarrowfill\cr}}}\limits^{\!#1}_{\!#2}}}
\newcommand*{\triplerightarrow}[1]{\mathrel{
  \settowidth{\@tempdima}{$\scriptstyle#1$}
  \mathop{\vcenter{
    \offinterlineskip\ialign{\hbox to\dimexpr\@tempdima+1em{##}\cr
    \rightarrowfill\cr\noalign{\kern.5ex}
    \rightarrowfill\cr\noalign{\kern.5ex}
    \rightarrowfill\cr}}}\limits^{\!#1}}}
\newcommand{\on}[1]{\operatorname{#1}}
\newcommand{\bb}[1]{{\mathbb{#1}}}
\newcommand{\ca}[1]{{\mathcal{#1}}}
\newcommand{\bd}[1]{{\mathbf{#1}}}
\newcommand{\ul}[1]{{\underline{#1}}}
\theoremstyle{definition}
\newtheorem{definition}{Definition}[section]
\theoremstyle{plain}
\newtheorem{proposition}[definition]{Proposition}
\newtheorem{lemma}[definition]{Lemma}
\newtheorem{theorem}[definition]{Theorem}
\newtheorem{corollary}[definition]{Corollary}
\newtheorem{intheorem}{Theorem}
\theoremstyle{remark}
\newtheorem{remark}[definition]{Remark}
\newtheorem{example}[definition]{Example}
\LetLtxMacro{\phiorig}{\phi}
\renewcommand{\phi}{\varphi}
\author{David Holmes}
\date{\today}
\address{Mathematisch Instituut\\ 
Universiteit Leiden\\
Postbus 9512\\
2300 RA Leiden\\
The Netherlands}
\email{holmesdst@math.leidenuniv.nl}
\thanks{The authors are both supported by NWO grant VI.Vidi.193.006}
\author{Pim Spelier}
\date{\today}
\address{Mathematisch Instituut\\ 
Universiteit Utrecht\\
Postbus 80010\\
3508 TA Utrecht\\
The Netherlands
}
\email{p.spelier@uu.nl}
\newcounter{nootje}
\newcommand{\beq}{\begin{equation}}
\newcommand{\eeq}{\end{equation}}
\newcommand{\beqs}{\begin{equation*}}
\newcommand{\eeqs}{\end{equation*}}
\tikzset{
  symbol/.style={
    draw=none,
    every to/.append style={
      edge node={node [sloped, allow upside down, auto=false]{$#1$}}}
  }
}
\newcommand{\ev}{\mathsf{ev}}
\DeclareMathOperator{\CH}{CH}
\DeclareMathOperator{\LogCH}{LogCH}
\DeclareMathOperator{\DR}{DR}
\DeclareMathOperator{\LogDR}{LogDR}
\DeclareMathOperator{\LLogDR}{\mathbb{L}ogDR}
\newcommand{\AJ}{\mathsf{AJ}}
\newcommand{\Picrel}{\mathsf{Pic}}
\renewcommand{\angle}[1]{\hspace{-2pt}\left\langle #1 \right\rangle}
\DeclareMathOperator{\id}{id}
\DeclareMathOperator{\Sym}{Sym}
\newcommand{\colim}{\operatornamewithlimits{colim}}
\begin{document}
\begin{abstract} 
We introduce a new logarithmic structure on the moduli stack of stable curves, admitting logarithmic gluing maps. Using this we define cohomological field theories taking values in the logarithmic Chow cohomology ring, a refinement of the usual notion of a cohomological field theory. We realise the double ramification cycle as a partial logarithmic cohomological field theory. 
\end{abstract}

\maketitle


\section*{Contents}
\contentsline {section}{\tocsection {}{1}{Introduction}}{2}{section.1}%
\contentsline {section}{\tocsection {}{2}{Conventions}}{5}{section.2}%
\contentsline {section}{\tocsection {}{3}{Log pointed curves}}{9}{section.3}%
\contentsline {section}{\tocsection {}{4}{Piecewise polynomial functions and the DR cycle}}{16}{section.4}%
\contentsline {section}{\tocsection {}{5}{Gluing log pointed curves}}{19}{section.5}%
\contentsline {section}{\tocsection {}{6}{Log CohFTs}}{22}{section.6}%
\contentsline {section}{\tocsection {}{7}{The log Double ramification cycle as a partial log CohFT}}{25}{section.7}%
\contentsline {section}{\tocsection {Appendix}{A}{Comparison of log Chow rings with log Chow rings of Barrott}}{33}{appendix.A}%
\contentsline {section}{\tocsection {Appendix}{B}{Tropical gluing}}{36}{appendix.B}%
\contentsline {section}{\tocsection {Appendix}{}{References}}{38}{section*.3}%


\newcommand{\Mtildes}{ \widetilde{\ca M}^\Sigma}
\newcommand{\sch}[1]{\textcolor{blue}{#1}}

\newcommand{\Mbar}{\overline{\ca M}}
\newcommand{\Mfrak}{\mathfrak{M}}
\newcommand{\Jac}{\mathfrak{Jac}}
\newcommand{\J}{\mathsf{J}}

\newcommand{\isom}{\stackrel{\sim}{\longrightarrow}}
\newcommand{\Ann}[1]{\on{Ann}(#1)}
\newcommand{\field}{K}
\newcommand{\cat}[1]{\bd{#1}}
\renewcommand{\log}{{\mathsf {log}}}
\newcommand{\trop}{{\mathsf {trop}}}
\newcommand{\op}{{\mathsf {op}}}
\newcommand{\M}{{\mathsf {M}}}
\newcommand{\gp}{{\mathsf {gp}}}
\newcommand{\punc}{{\mathsf {punc}}}
\newcommand{\st}{{\mathsf {st}}}
\newcommand{\gl}{{\mathsf{gl}}}
\newcommand{\glue}{{\mathsf{gl}}}
\newcommand{\et}{{\mathsf{et}}}
\newcommand{\vfc}{{\mathsf{vfc}}}
\newcommand{\CHop}{{\mathsf{CH}_\op}}
\newcommand{\sPP}{{\mathsf{sPP}}}
\newcommand{\ghost}{\overline{{\mathsf {M}}}}
\newcommand{\Gmlog}{\bb G_m^\log}

\newcommand{\Mpt}{\bb M}
\newcommand{\Mptst}{\Mpt^\st}
\newcommand{\Mst}{\Mbar}
\newcommand{\Mptststr}{\Mpt^{\st,\text{str}}}
\newcommand{\Cpt}{\bb C}
\newcommand{\Cptst}{\Cpt^\st}
\newcommand{\Divpt}{\bb{D}\mathsf{iv}}
\newcommand{\Div}{\mathsf{Div}}

\newcommand{\tensor}{\otimes}
\newcommand{\bun}{{\text{bun}}}
\newcommand{\Ad}{\operatorname{Ad}}
\newcommand{\Spec}{\operatorname{Spec}}
\newcommand{\Sch}{\operatorname{Sch}}
\newcommand{\LogSch}{\operatorname{LogSch}}

\newcommand{\GL}{\operatorname{GL}}
\newcommand{\SL}{\operatorname{SL}}

\newcommand{\Hom}{\operatorname{Hom}}
\newcommand{\Aut}{\operatorname{Aut}}

\newcommand{\RCH}{\operatorname{RCH}}
\newcommand{\RLogCH}{\operatorname{RLogCH}}
\newcommand{\PP}{\operatorname{PP}}
\newcommand{\im}{\operatorname{im}}

\newcommand{\RPC}{\operatorname{RPC}}
\newcommand{\RPCC}{\operatorname{RPCC}}

\newcommand{\A}{\mathbb{A}}
\newcommand{\B}{\mathsf{B}}
\newcommand{\C}{\mathbb{C}}
\newcommand{\D}{\mathbb{D}}
\newcommand{\E}{\mathbb{E}}
\newcommand{\F}{\mathbb{F}}
\newcommand{\G}{\mathbb{G}}
\newcommand{\I}{\mathbb{I}}
\newcommand{\K}{\mathbb{K}}
\newcommand{\N}{\mathbb{N}}
\renewcommand{\P}{\mathbb{P}}
\newcommand{\Q}{\mathbb{Q}}
\newcommand{\R}{\mathbb{R}}
\newcommand{\T}{\mathbb{T}}
\newcommand{\U}{\mathbb{U}}
\newcommand{\V}{\mathbb{V}}
\newcommand{\W}{\mathbb{W}}
\newcommand{\X}{\mathbb{X}}
\newcommand{\Y}{\mathbb{Y}}
\newcommand{\Z}{\mathbb{Z}}
\newcommand{\Acal}{\mathcal{A}}
\newcommand{\Bcal}{\mathcal{B}}
\newcommand{\Ccal}{\mathcal{C}}
\newcommand{\Dcal}{\mathcal{D}}
\newcommand{\Ecal}{\mathcal{E}}
\newcommand{\Fcal}{\mathcal{F}}
\newcommand{\Gcal}{\mathcal{G}}
\newcommand{\Hcal}{\mathcal{H}}
\newcommand{\Ical}{\mathcal{I}}
\newcommand{\Jcal}{\mathcal{J}}
\newcommand{\Kcal}{\mathcal{K}}
\newcommand{\Lcal}{\mathcal{L}}
\newcommand{\Mcal}{\mathcal{M}}
\newcommand{\Ncal}{\mathcal{N}}
\newcommand{\Ocal}{\mathcal{O}}
\newcommand{\Pcal}{\mathcal{P}}
\newcommand{\Qcal}{\mathcal{Q}}
\newcommand{\Rcal}{\mathcal{R}}
\newcommand{\Scal}{\mathcal{S}}
\newcommand{\Tcal}{\mathcal{T}}
\newcommand{\Ucal}{\mathcal{U}}
\newcommand{\Vcal}{\mathcal{V}}
\newcommand{\Wcal}{\mathcal{W}}
\newcommand{\Xcal}{\mathcal{X}}
\newcommand{\Ycal}{\mathcal{Y}}
\newcommand{\Zcal}{\mathcal{Z}}

\newcommand{\dual}{\wedge}

\newcommand{\ol}{\overline}

\newcommand{\str}{\text{str}}
\newcommand{\pt}{\{*\}}

\newcommand{\pie}{\mathring}

\section{Introduction}


\subsection{Background}
Cohomology classes on the moduli space of curves are the central object of study in Gromov-Witten theory. One of the most important structures such cohomology classes can carry is that of a \emph{cohomological field theory} (CohFT), introduced in the 1990s by Kontsevich and Manin \cite{Kontsevich1994Gromov-Witten-c} to capture the formal properties of the virtual fundamental class in Gromov-Witten theory.  A collection of classes forms a CohFT if they are compatible with pulling back along the \emph{gluing maps}
\begin{align*}
\Mbar_{g_1,n_1+1} \times \Mbar_{g_2,n_2+1} & \to \Mbar_{g_1 + g_2, n_1 + n_2}, \\
\Mbar_{g-1,n+2} & \to \Mbar_{g,n}. 
\end{align*}
(see \cite{Pandharipande2018CohFTCalculations} for a precise definition). CohFT structures have allowed for the computation of many interesting tautological classes (see e.g. \cite{Pandharipande2015RelationsOnMgnVia3spin,Pandharipande2018CohFTCalculations}), often via the Givental--Teleman reconstruction of semi\-simple CohFTs, and CohFTs form a bridge between algebraic geometry and integrable hierarchies (see e.g. \cite{Dubrovin2001NormalFormsOfHierarchies,Buryak2019Quadratic-doubl}). The main example of a cohomological field theory is the collection of Gromov--Witten invariants of a space $X$. For some simple spaces $X$ the Givental--Teleman reconstruction even recovers all invariants of $X$ from the $g = 0, n = 3$ invariants.

\subsection{Logarithmic classes and logarithmic gluing maps}
In recent years enhancing Gromov-Witten theory with additional data coming from logarithmic (log) geometry \cite{chen2014stable,Abramovich2014Stable-logaritmic-maps-II,Gross2013Logarithmic-gro} is becoming increasingly important; this allows one to capture \emph{tangency conditions} (recent examples include \cite{BarrottNabijou,Ranganathan2023Logarithmic-Gromov-Witten,van2023stable}), and plays a key role in \emph{degeneration arguments} \cite{abramovich2020decomposition,kim2018degeneration,ranganathan2019logarithmic}. Log Gromov-Witten invariants live most naturally in the \emph{log Chow ring} of the moduli space of (stable) log curves; this is the colimit over log blowups of the moduli space (equivalently, iterated blowups in boundary strata), see \ref{subsec:logchow} for a precise definition of this ring, and  \cite{Molcho2021The-Hodge-bundl,Holmes2022Logarithmic-double,Molcho2021A-case-study-of,Holmes2021Logarithmic-int} for examples and applications.

So far there is no notion of a cohomological field theory for classes in the log Chow ring, because there is no simple theory of log gluing maps between moduli spaces of log curves, and hence there exists no pullback along the gluing map on log Chow. We give a refinement of the moduli space of log curves, which admits gluing maps. This allows us to define a `logarithmic cohomological field theory' and we give some examples.

We show additionally that the gluing of two curves satisfies the expected universal property for maps to a space $X$ (\ref{thm:universalpropertygluing}). This raises the question on how logarithmic Gromov--Witten invariants pull back along the log gluing maps. We show that the log double ramification cycle, roughly counting maps to $\P^1$ with specified ramification over $0$ and $\infty$, is a \emph{partial} logarithmic cohomological field theory. The means that (among other axioms) the pullback along the separating gluing map $\Mbar_{g_1,n_1+1} \times \Mbar_{g_2,n_2+1}  \to \Mbar_{g_1 + g_2, n_1 + n_2}$ admits a recursive formula. However, it does not satisfy the loop axiom, the recursive formula for the pull-back along the loop gluing map $\Mbar_{g-1,n+2} \to \Mbar_{g,n}$ required for a log CohFT. In \cite{Spelier2025SplittingFormulaLogDR} the second author uses the results from this paper to prove a replacement of the loop axiom: this is still a recursive formula, but takes a slightly different form. In future work joint with Leo Herr we will compute the pullbacks along the gluing maps of any log Gromov--Witten invariant. 

To refine log curves in order to obtain gluing maps, we make a simple change: we require that the marked points of the log curves are \emph{logarithmic} sections, rather than just sections on the underlying schemes (see \ref{def:logpointedcurve} for details). We call these \emph{log pointed} curves, and write $\Mpt^\st_{g,n}$ for the stack of stable log pointed curves. 

\begin{intheorem}\label{inthm:gluing}
The stack $\Mpt^\st_{g,n}$ of stable log pointed curves is an algebraic stack with log structure. It admits a forgetful log map to the stack $\Mbar_{g,n}$ of stable curves. This map is an isomorphism on underlying algebraic stacks - it only changes the log structure. There are log gluing maps 
\begin{align*}
\gl\colon \Mptst_{g_1,n_1+1} \times \Mptst_{g_2,n_2+1} & \to \Mptst_{g_1 + g_2, n_1 + n_2}, \\
\gl\colon \Mptst_{g-1,n+2} & \to \Mptst_{g,n}.
\end{align*}
compatible with the usual gluing maps on $\Mbar_{g,n}$. 
\end{intheorem}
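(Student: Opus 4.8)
The plan is to deduce the whole statement from a single construction: realise $\Mptst_{g,n}$ as the algebraic stack $\Mbar_{g,n}$ equipped with a fine and saturated log structure $\M$ strictly larger than the standard one $\M^{\st}$, obtained from $\M^{\st}$ by adjoining, for each marking $i$, one ``leg-length'' generator $\ell_i$ lying over $0\in\Ocal$ (twisted by the $i$-th cotangent line $\psi_i$). Once this is in place, algebraicity and the forgetful map are immediate, and the gluing maps come almost for free: the generators $\ell_i$ are precisely what is needed to house the length of the node created by a gluing, a length which has no home in $\M^{\st}$ --- this being the classical reason the naive gluing maps fail to be logarithmic. The conceptual backbone is the tropical picture of the appendix on tropical gluing: the cone $\N^{E(\Gamma)}$ in the usual tropicalisation of $\Mbar_{g,n}$ is enlarged to $\N^{E(\Gamma)}\oplus\N^{L(\Gamma)}$ with $L(\Gamma)=\{1,\dots,n\}$ the legs; the new summand is a \emph{constant} cone, so it contributes a genuine nontrivial log structure (generically $\ghost\cong\N^{n}$) yet leaves the underlying stack untouched, and gluing a leg of $\Gamma_1$ to a leg of $\Gamma_2$ becomes the cone map identifying the two leg coordinates with a single new edge coordinate.

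For the log structure itself I would work over $(\Mbar_{g,n},\M^{\st})$ with its universal log curve $\pi\colon\mathcal C\to\Mbar_{g,n}$ and markings $p_1,\dots,p_n$, and first run the local analysis of \ref{def:logpointedcurve} near a marking: \'etale-locally $\mathcal C\cong\A^1$ over the base with $\M_{\mathcal C}$ the divisorial log structure of $\{p_i\}$, and a log section $\sigma_i$ refining $p_i$ over a log base $(S,\M_S)$ amounts to exactly the choice of an element $\ell_i\in\M_S$ with $\alpha_S(\ell_i)=0$ (the image of the local generator of $\M_{\mathcal C}$ at $p_i$, which vanishes there). Packaging this, set $\M\defeq\M^{\st}_{\Mbar_{g,n}}\oplus_{\Ocal^{*}}\bigoplus_{i=1}^{n}\M_i$, where $\M_i$ is the log structure with constant ghost $\N$ pulled back along the morphism $\Mbar_{g,n}\to[\A^1/\G_m]$ classifying $(\psi_i,0)$ (the exact twisting line bundle being determined by the computation above). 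Since $\M^{\st}$ and each $\M_i$ are fine and saturated, so is $\M$, and its underlying stack is $\Mbar_{g,n}$, hence algebraic. That $(\Mbar_{g,n},\M)$ represents the moduli of stable log pointed curves (for strict morphisms, as a log algebraic stack) is then the natural-transformation check that a strict morphism $(S,\M_S)\to(\Mbar_{g,n},\M)$ is the same datum as a stable curve over $\underline S$ together with the $\ell_i\in\M_S$ over $0$. Setting $\Mptst_{g,n}\defeq(\Mbar_{g,n},\M)$, the inclusion of the first summand $\M^{\st}\hra\M$ is a morphism of log structures over the identity of $\Mbar_{g,n}$, giving the forgetful log map, which is by construction an isomorphism on underlying stacks.

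For the gluing maps, take the non-separating case first: let $\underline{\gl}\colon\Mbar_{g-1,n+2}\to\Mbar_{g,n}$ be the classical map identifying $p_{n+1}$ with $p_{n+2}$ into a node $e_0$, and define $\gl^{\flat}\colon\underline{\gl}^{*}\M_{\Mbar_{g,n}}\to\M_{\Mbar_{g-1,n+2}}$ chart-locally by sending each node generator of the target to the corresponding one of the source, sending the new generator $e_0$ to $\ell_{n+1}+\ell_{n+2}$ (legitimate, since this lies over $0\in\Ocal$ because both $\ell_{n+1}$ and $\ell_{n+2}$ do --- exactly the compatibility that is unavailable for $\M^{\st}$), and sending the leg generators $\ell_1,\dots,\ell_n$ of the target to those of the source. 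Checking that this is a well-defined morphism of fine saturated log structures over $\Ocal^{*}$ yields the log gluing map $\gl\colon\Mptst_{g-1,n+2}\to\Mptst_{g,n}$, whose underlying morphism is, by construction, the classical gluing map --- which is the asserted compatibility (the classical gluing map itself not being logarithmic for $\M^{\st}$, this is the best one can hope for). The separating case is identical on the product log stack $\Mptst_{g_1,n_1+1}\times\Mptst_{g_2,n_2+1}$, whose log structure is the external sum so that both families of leg and node generators are at hand, gluing the two distinguished markings into a node $e_0\mapsto\ell^{(1)}_{n_1+1}+\ell^{(2)}_{n_2+1}$. Under the representability above one then verifies that these maps carry out the expected operation of gluing the two marked log sections, with the new node having length the sum of the two leg lengths.

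The only genuinely delicate point is the first step, and within it the precise identification of the moduli of log-section refinements of a single marking: the correct twisting line bundle, and the verification that adjoining the $\M_i$ keeps $\M$ fine and saturated and genuinely represents log pointed curves. The chart bookkeeping near the markings is where any hidden constraint on the $\ell_i$ would surface. Everything downstream is formal: algebraicity and the forgetful map are immediate once $\Mptst_{g,n}=(\Mbar_{g,n},\M)$ is set up, and the gluing morphisms --- the entire purpose of introducing $\M$ --- drop out as soon as the leg generators $\ell_i$ (mapping to $0$) are present, since they supply exactly the node-length section that $\M^{\st}$ lacks. A minor remaining check is that the node generators of $\M^{\st}$ (supported on the boundary) and the new leg generators (constant) do not interfere, and the routine compatibility of the gluing morphisms with one another under iteration.
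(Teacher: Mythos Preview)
Your approach is sound and takes a genuinely different route from the paper.

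For representability, the paper argues abstractly via Gillam's framework: it identifies the \emph{basic} log pointed curves (those with $\ghost_S=\N^{E(\Gamma)}\oplus\N^n$) as weakly terminal in each fibre $(\Mpt_{g,n})_{\ul S}$ (\ref{prop:basicpseudoterminal}), deduces basic $=$ minimal, and invokes the Descent Lemma to conclude that $\ul{\Mpt_{g,n}}\cong\ul{\Mfrak_{g,n}}$. Your route---writing down $\M = \M^{\st}\oplus_{\Ocal^*}\bigoplus_i\M_i$ with each $\M_i$ twisted by the cotangent line---is more concrete and reaches the same endpoint; indeed the paper recovers this $\psi$-twisting only a posteriori in \ref{lem:psiclasscombinatorial}, and explicitly remarks that $\M_{\Mpt}\neq\M_{\Mbar}\oplus\N^n$ on the nose. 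Your construction makes the log structure transparent from the start, at the cost of the bookkeeping you correctly flag as delicate: that a log lift of the schematic marking $p_i$ is exactly the datum of a section of $\M_S$ over $0$ living in the pulled-back $\psi_i$-torsor.

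For gluing the contrast is sharper. The paper builds the glued curve through \emph{pierced} log structures and a pushout in quasi-fine log spaces (\ref{sec:local_pushout}), which buys a universal property (\ref{thm:universalpropertygluing}) essential for the later applications to $\LLogDR$ and to \cite{Spelier2025SplittingFormulaLogDR}. Your prescription $e_0\mapsto\ell_{n+1}+\ell_{n+2}$ on generators is much faster for the bare statement of \ref{inthm:gluing}, but does not deliver that universal property. One point you should make explicit rather than absorb into ``checking this is well-defined over $\Ocal^*$'': for $\gl^\flat$ to be a morphism of log structures and not merely of ghost sheaves, the $\G_m$-torsor underlying the new-node generator $e_0$ must pull back along $\ul\gl$ to the torsor underlying $\ell_{n+1}+\ell_{n+2}$. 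In your setup this is exactly the classical identification of the conormal bundle of the boundary image with $\psi_{n+1}\otimes\psi_{n+2}$; without it the chart-local assignment does not globalise.
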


We define these gluing maps through defining \emph{pierced} curves associated to pointed curves. Like pointed curves, these have logarithmic sections, but slightly different log structure at the marked nodes. Tropically, these correspond to metrised graphs where the legs are of finite length, with the section being the endpoint of the leg. Gluing then corresponds to adding the lengths of the two legs that are being glued. Logarithmically, the gluing of two pierced curves can be constructed as a coproduct. This coproduct construction leads to the glued curve having the correct universal property (\ref{thm:universalpropertygluing}), and hence one also obtains gluing maps for moduli spaces of maps.

\begin{remark}
The theory of punctured log curves \cite{Abramovich2020Punctured-logar}, \cite{gross2023remarks} also deals with curves with different log structures at the nodes, in order to study negative tangencies and splitting morphisms. Our log pointed curves admit canonically the structure of punctured log curves (see \ref{subsec:punctured}). However, the stack of punctured log curves is larger and more complex than ours, and the analogue of a CohFT in this context is not clear to us. Our space $\Mptst_{g,n}$ does appear in \cite[\S5.2.2]{Abramovich2020Punctured-logar} (with notation $\widetilde{\frak M'}(\ca X/B, \tau)$), by specialising the construction there to the case where the target $\ca X$ is a point with trivial log structure. But the moduli space of maps from \emph{pierced} curves to a target $X$ is different from any moduli space of punctured maps. It is a refinement of a certain space of punctured maps, but our structures such as logarithmic sections, evaluation maps and gluing maps do not descend to the space of punctured maps (even in cases where all tangencies are non-negative). 
\end{remark}

\begin{remark}
The moduli spaces $\Mptst_{g,n}$ can be viewed as \emph{spaces of framed curves} in the sense of \cite{vaintrob2019moduli}, see in particular \cite[\S8.6.1]{bergstrom2023hyperelliptic}. In those works these spaces are constructed directly by adding log structure to $\Mbar_{g,n}$; the present work provides a modular interpretation for these spaces. 
\end{remark}

\subsection{The log double ramification cycle and gluing maps}
Given a vector of integers $\ul a = (a_1, \dots, a_n)$ summing to zero, the double ramification cycle $\DR_g(\ul a)$ on $\Mbar_{g,n}$ measures the locus where the divisor $\sum_i a_i p_i$ is linearly equivalent to $0$ (here $p_1,\dots,p_n$ are the $n$ sections of the curve). Formal definitions were given in \cite{Graber2003Hodge-integrals}, \cite{Holmes2017Extending-the-d}, \cite{Marcus2017Logarithmic-com}, via the Gromov-Witten theory of rubber maps, using birational geometry, and using log geometry, respectively. These equivalent constructions in fact naturally yield a slightly more refined object, the log double ramification cycle, which lies on a blowup of $\Mbar_{g,n}$ (the usual double ramification cycle is obtained by pushing forward to $\Mbar_{g,n}$). This blowup is not canonical, but the resulting class is independent of the choice in the sense that the pullbacks to any common refinement will coincide. In \cite[\S 9]{Holmes2017Multiplicativit} this was used to define the log double ramification cycle $\LogDR_g(\ul a)$ in the \emph{log Chow ring} 
\begin{equation}
\LogCH_{\text{HPS}}(\Mbar_{g,n}) = \colim_{Y \in \ca Bl(\Mbar_{g,n}) } \CH^*(Y), 
\end{equation}
where the colimit runs over smooth log blowups of $\Mbar_{g,n}$. The log double ramification cycle is shown to be tautological in \cite{Molcho2021A-case-study-of} and \cite{Holmes2021Logarithmic-int}, and an explicit formula is given in \cite{Holmes2022Logarithmic-double}, inspired by Pixton's formula as proven in \cite{Janda2016Double-ramifica}. The logarithmic double ramification cycle captures much more enumerative information than the ordinary double ramification cycle. For example, the log Gromov--Witten invariants of any toric variety can be expressed in terms of the log double ramification cycle \cite{Ranganathan2023Logarithmic-Gromov-Witten}, and Hurwitz numbers can be expressed in terms of the log double ramification cycle \cite{DoubleHurwitz2025CMR,cavalieri2024kleakydoublehurwitzdescendants}, and the formula is even used to study $\lambda_g$ \cite{Molcho2021The-Hodge-bundl}.

In \cite{Buryak2019Quadratic-doubl} it is proven that the double ramification cycle forms a partial cohomological field theory (a cohomological field theory that fails to satisfy the loop axiom). Our second main theorem is a generalisation of the result to the log double ramification cycle. In \ref{def:logDR_via_pullback} we construct a further refinement of $\LogDR_g(\ul a)$, to the log Chow ring of the stack of log pointed curves. Constructing the latter ring is actually slightly subtle. The stacks $\Mbar_{g,n}$ have a simple log structure; it is the divisorial log structure coming from the boundary. In particular, all their log blowups are birational, and can be dominated by smooth log blowups. In contrast, the stacks $\Mpt^\st_{g,n}$ have generically non-trivial log structure (in fact, the log structure has generic rank $n$, and the associated line bundles are obtained by pulling back the relative dualising sheaf of the curve along the corresponding marking). This means that log blowups of the spaces $\Mpt^\st_{g,n}$ can have quite complicated shapes; in particular, they need not be birational maps. A refined theory of log Chow rings that applies to such stacks is under development by Barrott \cite{Barrott2019Logarithmic-Cho}, but for the convenience of the reader we give in \ref{subsec:logchow} a simplified definition, and in \ref{sec:logchow} a comparison to Barrott's construction. 

The gluing maps of \ref{inthm:gluing} now allow us to define the notion of (partial) log CohFT: a system of classes in $\LogCH(\Mptst_{g,n})$ for every $g,n$ with $2g - 2 + n >0$, compatible with the gluing maps and the forgetful maps $\Mptst_{g,n+1} \to \Mptst_{g,n}$; see \ref{sec:log_cohFT} for the precise definition. With this theory in hand, in \ref{sec:logDR_as_CohFT} we prove that this lift of $\LogDR_g(\ul a)$ to $\LogCH(\Mptst_{g,n})$ forms a partial log CohFT. 
\begin{intheorem}[\Cref{thm:drpartialcohft}]
For $g,n$ with $2g-2+n > 0$, let $\pi\colon \Mptst_{g,n} \to \Mbar_{g,n}$ denote the forgetful map. For a sequence of integers $(a_1, \dots, a_n)$, let $\LLogDR(\ul a) \in \LogCH(\Mptst_{g,n})$ denote $\pi^* \LogDR(\ul a)$ with the convention that $\LogDR(\ul a) = 0$ if $\sum_i a_i \ne 0$. Then the collection \[(\LLogDR_{g,n})_{2g - 2 + n > 0}\] forms a partial log CohFT.
\end{intheorem}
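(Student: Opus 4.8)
The plan is to verify in turn the axioms that make up a partial log CohFT (see \ref{sec:log_cohFT}): $S_n$-equivariance, the forgetful/string axiom along $\Mptst_{g,n+1}\to\Mptst_{g,n}$, the flat unit axiom, and --- the one carrying real content --- the splitting axiom along the separating log gluing map. Since $\LLogDR_{g,n}(\ul a)=\pi^*\LogDR_{g,n}(\ul a)$ by \ref{def:logDR_via_pullback}, and $\pi$ is an isomorphism on underlying stacks compatible with all the structure maps, the first three axioms reduce to the corresponding known statements for $\LogDR$ on $\Mbar_{g,n}$. Equivariance is formal. The forgetful axiom follows from $\LogDR_{g,n+1}(\ul a,0)=f^*\LogDR_{g,n}(\ul a)$ for the forgetful map $f$ on $\Mbar$, pulled back along $\pi$ and combined with the commuting square relating $f$, $\pi$ and the forgetful map on $\Mptst$. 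The flat unit axiom follows because $\Mbar_{0,3}$ is a point with $\LogDR_{0,3}(a,-a,0)=[\Mbar_{0,3}]$, so $\LLogDR_{0,3}(a,-a,0)=[\Mptst_{0,3}]$; together with the convention $\LogDR(\ul a)=0$ for $\sum_i a_i\neq 0$ (consistent throughout by degree bookkeeping) this is exactly $\LLogDR_{0,3}(a,b,0)=\delta_{a+b,0}\,[\mathrm{pt}]$.

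The heart of the argument is the splitting axiom for the log gluing map $\gl\colon\Mptst_{g_1,n_1+1}\times\Mptst_{g_2,n_2+1}\to\Mptst_{g_1+g_2,n_1+n_2}$ of \ref{inthm:gluing}: writing $I_1\sqcup I_2$ for the induced partition of the markings and $a_{I_j}=\sum_{i\in I_j}a_i$, one must prove
\[
\gl^*\,\LLogDR_{g_1+g_2}(\ul a)\;=\;\LLogDR_{g_1}(\ul a_{I_1},-a_{I_1})\;\boxtimes\;\LLogDR_{g_2}(\ul a_{I_2},a_{I_1})
\]
in $\LogCH(\Mptst_{g_1,n_1+1}\times\Mptst_{g_2,n_2+1})$. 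Crucially this cannot be deduced by pulling back an identity on $\Mbar_{g,n}$: the separating gluing map on $\Mbar_{g,n}$ is not a log morphism for the boundary log structures, so there is no pullback on $\LogCH(\Mbar)$ --- supplying the missing log coordinate at the new node is precisely the point of the log-pointed refinement --- and the computation must happen on $\Mptst$. I would use a modular description of $\LogDR$: either $\LogDR_g(\ul a)$ as the pullback of the unit section of the universal (log) Jacobian along the Abel--Jacobi section attached to $\sum_i a_i p_i$, or equivalently the piecewise-polynomial/rubber formulation recalled in Section~4. Restricting to the image of $\gl$, the universal curve is the coproduct of the two pierced curves along the marked node (the universal property \ref{thm:universalpropertygluing}); analysing the Abel--Jacobi datum, equivalently the log rubber space, over this locus, the absence of a loop at a separating node makes the twist at the node \emph{rigid}, equal to $\pm a_{I_1}$, rather than varying, so the glued curve's Abel--Jacobi section decomposes as the pair of Abel--Jacobi sections of $\sum_{i\in I_1}a_i p_i - a_{I_1}q$ and $\sum_{i\in I_2}a_i p_i + a_{I_1}q$ --- the node $q$ carrying weight $\mp a_{I_1}$, which is meaningful precisely because $q$ is now a logarithmic section and which makes both restricted bundles have relative degree zero. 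Consequently the unit section pulls back to the exterior product of the two unit sections, and passing to Chow classes gives the asserted identity on any single log blowup resolving the relevant Abel--Jacobi sections.

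The real obstacle is promoting this to an identity in $\LogCH(\Mptst)$, that is, compatibly with the colimit over log blowups defining the log Chow ring in \ref{subsec:logchow}. For $\Mbar_{g,n}$ all log blowups are birational and dominated by smooth ones, but log blowups of $\Mptst_{g,n}$ need not be birational and may be geometrically intricate, so one must check that the log modifications of $\Mptst_{g_1+g_2}$ needed to resolve the Abel--Jacobi section pull back under $\gl$ to products of log modifications of the two factors, and that the decomposition of the resolved sections is stable under further refinement --- producing a cofinal system of log blowups on which the cycle-level splitting holds. This is where the coproduct construction of $\gl$ is essential: a log modification of the glued pierced curve corresponds tropically to a subdivision of the glued cone complex, which splits along the two pieces by the tropical gluing of Appendix~B, yielding the required cofinal system; passing to the colimit then gives the identity in $\LogCH$. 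The other axioms being formal, this completes the proof that $(\LLogDR_{g,n})_{2g-2+n>0}$ is a partial log CohFT. It is genuinely only partial: the loop axiom would read $\gl_{\mathrm{loop}}^*\LLogDR_{g,n}(\ul a)=\sum_{k\in\Z}\LLogDR_{g-1,n+2}(\ul a,k,-k)$ --- an infinite sum carrying no meaning in $\LogCH$, reflecting that at a non-separating node the twist genuinely ranges over $\Z$ --- and a corrected finite replacement requires substantially more work (carried out in the cited paper of the second author).
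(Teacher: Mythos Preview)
Your outline matches the paper's proof closely: the non-trivial axiom is the separating splitting, and the argument proceeds via the Abel--Jacobi/$\Divpt$ description of $\LLogDR$, showing that over a separating node the data decomposes (the paper packages this as the pullback squares of \ref{lem:bigdrpbsquare}), then finding compatible log blowups on which to run the Chow computation.

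One point deserves sharpening. You propose to obtain compatible blowups by starting from a log modification of the target $\Mptst_{g_1+g_2,n_1+n_2}$ and arguing that its pullback along $\gl$ ``splits along the two pieces'' into a product. This is not true in general: the pullback of a subdivision of $\Mpt^\trop$ along the tropical gluing map is a subdivision of $\Mpt^\trop_1\times\Mpt^\trop_2$, but not usually a product subdivision (already for $\R_{\ge 0}\times\R_{\ge 0}$ the diagonal cut is not a product). The paper's \ref{prop:compatiblesubdivions} runs the construction in the opposite direction: start with smooth subdivisions $\Sigma_1,\Sigma_2$ of the factors resolving $\Divpt_1^0,\Divpt_2^0$, descend the $\Aut(\Gamma)$-invariant product $\Sigma_1\times\Sigma_2$ to the image stratum $\Mpt_\Gamma^\trop\subset\Mpt^\trop$, extend to all of $\Mpt^\trop$ (using that the stratum is maximal), and then refine by star subdivisions \emph{away from} $\Mpt_\Gamma^\trop$ until $\Divpt^\trop$ is also resolved. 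This produces a single triple of blowups with $\gl^*\widetilde{\Mpt}^\st=\widetilde{\Mpt}^\st_1\times\widetilde{\Mpt}^\st_2$, which is all that is needed: the log Chow pullback of \ref{subsubsec:logpullback} is computed on any one representative, so no cofinal system is required. With that adjustment your argument goes through, and the remaining Chow manipulation is exactly the computation at the end of the proof of \ref{thm:drpartialcohft}.
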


In particular, there is a recursive formula for the pullback of the logarithmic double ramification cycle along the separating gluing map. It forms a \emph{partial} CohFT, which means that the loop axiom, a specific recursive formula for the pullback along the non-separating gluing map, does not hold. A replacement for the loop axiom is given by the second author in \cite[Theorem A]{Spelier2025SplittingFormulaLogDR}, where they give a recursive formula for the pullback of $\LLogDR_g(\ul a)$ using the universal property for gluing pierced curves.


\subsubsection{Minimal log CohFTs}
Every CohFT trivially gives rise to a log CohFT, but we give an example of a log CohFT that does not arise in this way.

Pandharipande and Zvon\-kine \cite{Pandharipande2019Cohomological-field} give examples of CohFTs coming from so-called \emph{minimal} classes in $\CH^*(\Mbar_{g,n})$; a minimal class is a class that vanishes under pullback along all gluing maps with target $\Mbar_{g,n}$. This CohFT is constructed so that $\gamma$ is one of the values it takes. Their construction immediately generalises to our setting, and given a minimal class $\gamma$ in $\LogCH^*(\Mptst_{g,n})$, one obtains a log CohFT taking the value $\gamma$ for a certain input. In \ref{subsub:firstminimal} and \ref{subsub:secondminimal} we give examples of minimal classes in $\LogCH^*(\Mptst_{g,n}) \setminus \CH^*(\Mptst_{g,n})$, and hence of log CohFTs that are not CohFTs.



\subsection{Tropical analogue}
In \ref{sec:tropical} we give a tropical version of this story. In particular we give in \ref{def:tropicalpointed} a tropical analogue of log pointed curves. In \ref{sec:tropical} we define tropical gluing maps between moduli spaces of tropical pointed curves, and tropical evaluation maps. This appendix can be read independently from the rest of the paper.

\subsection{Acknowledgements}
We are grateful to Leo Herr, Alessandro Giacchetto and Nitin Chidambaram for helpful discussions, to Mark Gross for sharing an early version of \cite{Abramovich2020Punctured-logar}, to Dan Petersen for pointing out the connection to framed curves, and to Dan Abramovich for suggesting the pierced log structures introduced in \ref{sec:sewing}, which significantly streamlined the glueing procedure. We thank the referees for their feedback. We are especially thankful to Johannes Schmitt for very careful feedback on an earlier draft.

\section{Conventions}
\label{sec:conventions}
Everywhere except \ref{sec:sewing} we work in the category $\LogSch$ of fs log schemes over a fixed base log scheme. In \ref{sec:PP_and_DR}, \ref{sec:log_cohFT}, \ref{sec:logDR_as_CohFT} and \ref{sec:logchow} we take the base log scheme to be a point with trivial log structure, in order to have well-behaved Chow rings. In \ref{prop:LogDRinLogS} and \ref{subsub:secondminimal} we furthermore assume we are in characteristic $0$.

\subsection{Log stacks}
\begin{definition}
A \emph{log algebraic space (or log algebraic stack)} is an algebraic space (or algebraic stack) equipped with an (fs) log structure. If a fibred category $X/\LogSch$ is represented by a log algebraic stack, we write $\ul{X}$ for the underlying algebraic stack.
\end{definition}

\begin{definition}
A \emph{log stack} is a stack in groupoids over $\cat{LogSch}_k$ for the strict \'etale topology, admitting a log \'etale cover by an algebraic stack with log structure, and with diagonal representable by algebraic spaces with log structure. 
\end{definition}
\begin{remark}
Examples of log stacks include:
\begin{itemize}
\item
any algebraic stack with log structure (in which case the cover can be taken strict);
\item the log and tropical multiplicative groups $\bb G_\log$ and $\bb G_\trop$;
\item the log and tropical Picard groups of Molcho and Wise \cite{Molcho2018The-logarithmic}. 
\end{itemize}
\end{remark}


\subsection{Log Curves}
Following \cite{Kato2000Log-smooth-defo}, a \emph{log curve} is a morphism of log algebraic spaces $\pi\colon C \to S$ that is proper, integral, saturated, log smooth, and has geometric fibers which are reduced, connected and of pure dimension 1. Kato proves that the locus where $C \to S$ is not vertical is (Zariski-locally on $S$) a disjoint union of \emph{schematic} sections; however, these do not in general admit lifts to logarithmic sections. Our notion of a pointed log curve imposes such lifts. 

More formally, an \emph{$n$-pointed} log curve is a tuple 
\begin{equation}
    (\pi\colon C \to S, p_1, \dots, p_n)
\end{equation}

where $\pi\colon C \to S$ is a log curve, and $p_i\colon \ul S \to \ul C$ is a map of algebraic spaces, such that $C/S$ is vertical at a point $c \in C$ if and only if $c$ does not lie in the image of some $p_i$. 

We denote the moduli space of $n$-pointed genus $g$ log curves by $\Mfrak_{g,n}$, and the substack of stable log curves by $\Mbar_{g,n}$. The underlying algebraic stacks $\ul{\Mfrak_{g,n}}$ and $\ul{\Mbar_{g,n}}$ are respectively the moduli space of $n$-pointed genus $g$ curves and the moduli space of stable $n$-pointed genus $g$ curves, with log structure given by the normal crossings boundary divisor. 


\subsection{Log Chow rings}
\label{subsec:logchow}

Throughout this subsection we work over a point with trivial log structure. For a smooth DM stack $X$ locally of finite type, we write $\CH^*(X)$ for the Chow ring of algebraic cycles modulo rational equivalence, in the sense of \cite{Vistoli1989Intersection-th}. 

The log Chow ring of a smooth log smooth DM stack $X$ was introduced in \cite[\S9]{Holmes2017Multiplicativit}. We considered the category of smooth log blowups $Y \to X$, and given two such models $f\colon Y_1 \to Y_2$ we have (by smoothness) a Gysin pullback $f^!\colon \CH^*(Y_2) \to \CH^*(Y_1)$. We defined the log Chow ring of $X$ as the corresponding (filtered) colimit in the category of rings:
\begin{equation}
\LogCH_{\text{HPS}}(X) = \colim_{Y \in \ca Bl(X) } \CH^*(Y). 
\end{equation}

For the present paper we need to work with the log Chow rings of DM stacks which are not log smooth; the standard example is a stratum of a log smooth scheme, which is itself almost never log smooth. A very general theory is under development by Barrott; see \cite{Barrott2019Logarithmic-Cho}. However, for the present paper we can make do with a much simpler definition, which we hope will make life easier for the reader. Below we present our definition, and then in \ref{sec:logchow} we explain why (under mild assumptions) our naive log Chow ring has a map to that of Barrott, so that our results will transfer automatically to his framework once the details of the latter are complete. 

We start by setting up our terminology.

\subsubsection{Chow cohomology}
\label{sec:chow_cohomology}
We work always with rational coefficients. 
\newcommand{\CHOP}{\on{CH}_{\sf{OP}}}

An Artin stack $X$ locally of finite type over $k$ admits various Chow theories; to simplify we assume $X$ equidimensional. 
\begin{enumerate}
\item
If $X$ is quasi-compact and stratified by global quotient stacks, Kresch \cite{Kresch1999Cycle-groups-fo} defines a Chow group $\CH(X)$, and if $X$ is smooth it comes with a ring structure from the intersection pairing. 
\item The operational Chow ring $\CHop(X)$ defined in \cite[\S2.2]{Bae2020Pixtons-formula} is the collection of morphisms $\CH(B) \to \CH(B)$ for $B \to X$ maps from finite-type schemes, compatible with proper pushforward, flat pullback, and refined Gysin pullback along lci morphisms. If schemes are replaced by algebraic spaces or DM stacks, we obtain canonically isomorphic rings. 
\item Bae and Schmitt \cite{Bae2022Chow-rings-I} consider a variant of (2) where the test objects $B$ are taken to be algebraic stacks of finite type stratified by global quotient stacks, and the compatibilities are with representable proper pushforward, flat pullback, and refined Gysin pullback along representable lci morphisms. We denote this ring by $\CHOP(X)$. 
\end{enumerate}
In this paper we will work mainly with $\CHOP$, but to help the reader relate this work to others in the literature we summarise known comparisons. 
\begin{enumerate}
\item Since every scheme is a stack, there is a natural restriction map $\CHOP(X) \to \CHop(X)$. 
\item If $X$ is smooth, quasi-compact, and stratified by global quotient stacks, then the intersection product defines a map $\CH(X) \to \CHop(X)$. 
\item 
If $X$ is smooth, quasi-compact, and stratified by global quotient stacks, then the map $\CH(X) \to \CHOP(X)$ furnished by the intersection product is an \emph{isomorphism}, see \cite[Theorem C.6]{Bae2022Chow-rings-I}
\item If $X$ is smooth, quasi-compact, and DM, then all these maps are isomorphisms, see \cite[Proposition 5.6]{Vistoli1989Intersection-th}. 
\item Even for $X$ smooth and quasi-compact, we do not know whether $\CHOP(X) \to \CHop(X)$ is an isomorphism. 
\end{enumerate}


\subsubsection{Log modifications}

\begin{definition}
A morphism $f\colon \tilde X \to X$ of log algebraic spaces is a \emph{log modification} if there exists a log scheme $Y$ and log blowups $Y \to \tilde X$, $Y \to X$ making the obvious triangle commute. 
\end{definition}
\begin{definition}
A morphism $f\colon \tilde X \to X$ of log stacks is a \emph{log modification} if for every log scheme $T$ and map $T \to X$, the natural map 
\begin{equation}
T \times_X \tilde X \to T
\end{equation}
is a log modification of log algebraic spaces. 
\end{definition}

\subsubsection{Log Chow}

\begin{definition}
Let $X$ be a log stack. We say $X$ is \emph{dominable} if there exists a log modification $Y \to X$ with $Y$ a finite-type algebraic stack with log structure. 
\end{definition}

\begin{definition}
\label{def:logchow}
Let $X/\LogSch$ be a dominable log stack. We define the log Chow ring of $X$ as the colimit in rings
\[
\LogCH(X) \coloneqq \colim_{\widetilde{X} \to X} \CHOP \widetilde{X}
\]
where $\widetilde{X} \to X$ ranges over all log modifications with $\widetilde{X}$ representable by finite-type algebraic stacks with log structure. 
\end{definition}

The definition also makes sense for a non-dominable log stack, but is not morally correct (it gives $\bb Q$). 


\begin{example}
\label{ex:logchgmlog}
Consider $\G_m^\log$ (over the point with trivial log structure). This has exactly one non-trivial log blowup, namely $\P^1$. We see that $\LogCH(\G_m^\log) = \CH_*(\P^1) = \Q[h]/(h^2)$ where $h = [0]$ is the class of $0 \in \P^1$.
\end{example}


\begin{example}
If $X$ is a log algebraic stack with Artin fan $A$ then $\CHOP(A)$ is the ring of strict piecewise polynomials on $X$, and $\LogCH(A)$ is the ring of piecewise polynomials on $X$; this can be taken as a definition, or see \cite[Theorem B]{Molcho2021A-case-study-of} for comparison to other definitions in the literature (\cite{Molcho2021The-Hodge-bundl}, \cite{Holmes2021Logarithmic-int}, \cite{Holmes2022Logarithmic-double}). 
\end{example}

\begin{remark}
Let $X$ be a log smooth stack of finite type. Then \cite[Definition~2.4]{Holmes2021Logarithmic-int} provides an alternative proposal $\LogCH_{\text{HS}}(X)$ for the log Chow ring. We do not know whether this is equivalent to the one we use here, because \cite{Holmes2021Logarithmic-int} work with $\CHop$ in place of $\CHOP$ (see \ref{sec:chow_cohomology}), but there is a natural map $\LogCH(X) \to \LogCH_{\text{HS}}(X)$, and all constructions of \cite{Holmes2021Logarithmic-int} can naturally be lifted to $\LogCH(X)$. 
\end{remark}

\subsubsection{Pullbacks}
\label{subsubsec:logpullback}

Recall that for any map $f\colon X \to Y$ of schemes, there is a pullback $f^*\colon \CHOP(Y) \to \CHOP(X)$. This notion extends to pullbacks for $\LogCH$, as we will define now.
\begin{definition}
Let $f\colon X \to Y$ be a morphism of dominable log stacks. Let $z \in \LogCH(Y)$, and let $\tilde{Y}/Y$ be a log blowup with $z \in \CH(\tilde{Y})$. Consider the fiber square
\[
\begin{tikzcd}
\tilde{X} \arrow[r, "\tilde{f}"]\arrow[d] \arrow[dr, phantom, "\lrcorner", very near start] &  \tilde{Y} \arrow[d]\\
X \arrow[r, "f"] & Y
\end{tikzcd}
\]
Then we define $f^* z \in \LogCH(X)$ to be $\tilde{f}^* z \in \CH(\tilde{X})$. As this is independent of the choice of $\tilde{Y}$, this defines a map
\[
f^*\colon \LogCH(Y) \to \LogCH(X).
\]
\end{definition}

\section{Log pointed curves}

\begin{definition}
\label{def:logpointedcurve}
Fix non-negative integers $g$ and $n$. A \emph{log $n$-pointed curve of genus $g$} is a tuple $(C/S, p_1, \dots, p_n)$ where 
\begin{enumerate}
\item
$C/S$ is a log curve;
\item the $p_i\colon S \to C$ are $S$-maps of log schemes, landing in the locus where $C$ is classically smooth over $S$, with disjoint images;
\item $C \to S$ is vertical precisely outside the images of the $p_i$. 
\end{enumerate}
\end{definition}

\begin{remark}
There is an important difference between a log pointed curve $(C/S, p_1, \dots, p_n)$ and a marked log curve $(C/S, \ul p_1, \dots, \ul p_n)$. For a log pointed curve, the sections $p_i$ are maps of log algebraic spaces, while for a marked log curve the sections $p_i$ are just maps of the underlying algebraic spaces.
\end{remark}

\begin{definition}
Let $(C/S, p_1, \dots, p_n)$ and $(C'/S', p'_1, \dots, p'_n)$ be two log pointed curves. A \emph{morphism} from the curve $(C'/S', p'_1, \dots, p'_n)$ to the curve $(C/S, p_1, \dots, p_n)$ consists of a morphism $f\colon S' \to S$ and an $S'$-isomorphism of log schemes 
\begin{equation}
\phi \colon C' \to C \times_S S'
\end{equation}
such that for every $i$, writing $(p_i)'\colon S' \to C \times_S S'$ for the map induced by $p_i$, we have $p_i' = (p_i)'$. 
\end{definition}

These objects and morphisms form the category $\Mpt_{g,n}$ of log $n$-pointed curves, and the forgetful morphism taking $(C/S, p_1, \dots, p_n)$ to $S$ (and $(f, \phi)$ to $f$) gives a functor to $\cat{LogSch}$. Straightforward checking yields the following lemma. 

\begin{lemma}
The forgetful functor $\Mpt_{g,n} \to \cat{LogSch}$ is a CFG, and the forgetful map $\Mpt_{g,n}/\LogSch \to \Mbar_{g,n}/\LogSch$ is a map of CFG's. \qed
\end{lemma}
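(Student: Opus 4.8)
The plan is to unwind the definition of a category fibred in groupoids (CFG) over $\cat{LogSch}$ and verify the two standard axioms directly for $\Mpt_{g,n}$, then observe that the forgetful functor to $\Mbar_{g,n}$ respects this structure. First I would check that $\Mpt_{g,n} \to \cat{LogSch}$ is a category fibred in groupoids: given a log pointed curve $(C/S, p_1,\dots,p_n)$ and a morphism $f\colon S' \to S$ in $\cat{LogSch}$, the base change $C' \coloneqq C \times_S S'$ is again a log curve (log smoothness, properness, integrality and saturatedness are all stable under base change of fs log schemes, and the geometric fibres are unchanged), the sections $p_i$ pull back to log sections $p_i'\colon S' \to C'$ landing in the classically smooth locus with disjoint images, and verticality away from the $p_i$ is preserved because it can be checked fibrewise. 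This produces a cartesian arrow $(C'/S',p_i') \to (C/S,p_i)$ over $f$, giving existence of pullbacks. For the groupoid condition on fibres, one checks that for a fixed $S$ any morphism of log pointed curves over $\id_S$ is an isomorphism: by definition such a morphism is already an $S$-isomorphism $\phi\colon C' \iso C \times_S S = C$ of log schemes compatible with the sections, so it is invertible, and the inverse is again a morphism of log pointed curves.

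Next I would verify the (pre)sheaf/descent-type bookkeeping that is implicit in calling this a CFG: compositions of cartesian arrows are cartesian, the identity arrow is cartesian, and the chosen pullbacks compose up to canonical isomorphism — all of which reduce to the corresponding well-known facts for fibre products of log schemes and the fact that $(C\times_S S')\times_{S'} S'' \cong C\times_S S''$ canonically, carrying the sections along. None of this is difficult; it is the routine verification that "a family over $S$ plus base change" assembles into a fibred category, exactly as for $\Mbar_{g,n}$ itself.

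Finally, for the second assertion, I would define the forgetful map on $\Mpt_{g,n}$ by sending $(C/S, p_1,\dots,p_n)$ to the underlying marked log curve $(C/S, \ul p_1,\dots,\ul p_n)$ — i.e. remembering the log curve and the sections only as maps of underlying algebraic spaces — and sending a morphism $(f,\phi)$ to $(f, \ul\phi)$. One checks this is a functor commuting with the two forgetful functors to $\cat{LogSch}$ (it does nothing to $f$), hence a map of CFG's over $\cat{LogSch}$; the only thing to see is that a stable marked log curve in the image is indeed stable and $n$-pointed in the sense used to define $\Mbar_{g,n}$, which is immediate since the conditions on $(C/S, p_1,\dots,p_n)$ in \ref{def:logpointedcurve} are strictly stronger than those defining marked log curves.

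The construction has essentially no obstacle — every clause is a known stability property of the relevant notions under base change — so the only mild subtlety worth spelling out is condition (2) of \ref{def:logpointedcurve}: one must confirm that a \emph{log} section pulls back to a \emph{log} section (not merely that the underlying schematic section does), which holds because $p_i'$ is defined as the unique arrow $S' \to C'$ induced by $p_i\colon S\to C$ and $f\colon S'\to S$ via the universal property of the fibre product $C' = C\times_S S'$ in $\cat{LogSch}$, and this arrow lands in the classically smooth locus and has image disjoint from the others because these are closed/open conditions preserved by base change. Hence the straightforward checking referenced in the statement goes through, and the lemma follows.
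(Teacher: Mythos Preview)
Your proposal is correct and is exactly the ``straightforward checking'' the paper alludes to: the paper omits the proof entirely (the \qed is attached to the statement), and you have simply spelled out the standard verification that families with base change form a CFG and that forgetting the log structure on the sections is a morphism of CFGs. One minor wrinkle: $\Mpt_{g,n}$ as defined does not impose stability, so the target of the forgetful map is really $\Mfrak_{g,n}$ rather than $\Mbar_{g,n}$ (the paper itself silently switches to $\Mfrak_{g,n}$ in the subsequent corollary); your remark about stability being preserved is therefore unnecessary, but harmless.
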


\begin{remark}
\label{rem:hmupointed}
A notion of `pointed log curve' is defined in \cite{Huszar2019Clutching-and-gluing}, but this is different from both the marked log curves and the log pointed curves in the present work; in [loc.cit.] the word `pointed' refers to \emph{pointed monoids}: monoids containing an absorbing element $\infty$. They define a pointed log curve as a log curve where the edge length of inner edges of the graph is allowed to be $\infty$. This allows for the gluing of log curves by setting the length of the new edge to be $\infty$.

Note that there is no natural map from the space of pointed log curves in the sense of \cite{Huszar2019Clutching-and-gluing} to the space of log curves. In addition, their theory does not seem to furnish evaluation maps in the sense of \ref{subsec:evaluation}. 
\end{remark}

\begin{definition}
\label{def:ellpi}
Let $C/S$ be a log pointed curve, and let $p_i: S \to C$ be one of the sections. Let $P_i = \im p_i$, with the strict log structure coming from the closed embedding $P_i \to C$. Then $\ghost_{P_i} = \ghost_{S} \oplus \ul{\N}$, with the natural map $\ghost_S \to \ghost_{P_i}$ induced from $P_i \to S$ being the inclusion. 

We also have a map $\ol{p_i^*}: \ghost_{P_i} = \ghost_S \oplus \ul{\N} \to \ghost_{S}$. We define $\ell(p_i) \in \ghost_S(S)$, the \emph{length of $p_i$}, to be the image of $1 \in \ul{\N}$ in $\ghost_{S}$.
\end{definition}

\begin{lemma}
\label{lem:ellpizero}
Let $(C/S,p_1,\dots,p_n)$ be a log pointed curve, and let the map $\alpha\colon \M_S \to \ca O_S$ denote the structure morphism. Then $\bar{\alpha}: \ghost_S \to \ca O_S/\ca O_S^\times$ sends $\ell(p_i)$ to $0$.
\end{lemma}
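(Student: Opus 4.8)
The plan is to reduce the claim to a local computation on $C$ along the marked divisor $P_i = \im p_i$, using Kato's local description of log smooth curves at a marking. Since the equality $\bar\alpha(\ell(p_i)) = 0$ can be checked étale-locally on $S$, I would first pass to a chart over which $\ell(p_i)$ lifts to a section $\tilde\ell \in \M_S$; it then suffices to prove $\alpha_S(\tilde\ell) = 0 \in \ca O_S$, since the image of $0$ is the element of $\ca O_S/\ca O_S^\times$ written $0$ in the statement.

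Next I would unwind \ref{def:ellpi}. As $p_i$ is a section through the classically smooth locus, $P_i \ra C$ is a relative effective Cartier divisor, and by Kato's structure theorem \cite{Kato2000Log-smooth-defo}, étale-locally near $P_i$ the log structure $\M_C$ is obtained from $\pi^*\M_S$ by gluing on the divisorial log structure of $P_i \sub C$; in particular the $\ul\N$-summand of $\ghost_{P_i} = \ghost_S \oplus \ul\N$ is generated by the class of a local equation $t$ of $P_i$. Restricting $t$ to $P_i$ gives a section $t|_{P_i} \in \M_{P_i}$ lifting the generator $1 \in \ul\N$, and $\alpha_{P_i}(t|_{P_i}) = t|_{P_i} = 0$ in $\ca O_{P_i}$ because $t$ cuts out $P_i$.

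Finally I would pull back along $p_i$. Since $p_i$ identifies $S$ with $P_i$ on underlying schemes, pulling back $t|_{P_i}$ along $p_i$ produces a section $\tilde\ell \in \M_S$ which, by construction, represents $\ell(p_i) = \overline{p_i^*}(1)$; compatibility of structure maps with log pullback then gives $\alpha_S(\tilde\ell) = p_i^\#(\alpha_{P_i}(t|_{P_i})) = p_i^\#(0) = 0$, as desired. The main subtlety is the middle step: one must know that the chosen lift represents exactly the generator of the new $\ul\N$-factor of $\ghost_{P_i}$, and not that generator modified by a class pulled back from $\ghost_S$; this forces $t$ to be taken as a generator of the divisorial log structure of $P_i$ (rather than an arbitrary section of $\M_C$ vanishing on $P_i$), and is precisely the local normal form of a log smooth curve at a marked point.
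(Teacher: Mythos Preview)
Your argument is correct and is essentially the same as the paper's: both reduce to a local picture via Kato's description of the log structure at a marking, identify a local equation $t$ of $P_i$ as a generator of the $\ul\N$-summand, and then read off $\alpha_S(p_i^*t)=0$ from commutativity of the structure-map square (the paper phrases this as commutativity of the chart diagram $\M_A\oplus t^{\bb N}\to A[t]$ over $\M_A\to A$ on a strict henselian base, while you trace the same computation through $\M_{P_i}$). The only cosmetic difference is your \'etale-local reduction versus the paper's strict henselian reduction, which are interchangeable here.
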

\begin{proof}
It is enough to prove this on strict henselian points, so assume $S = \Spec A$ for a strict henselian ring $A$. Then by \cite[Table~1.8]{Kato2000Log-smooth-defo}, locally the section $p_i$ of $C/\Spec k$ looks like
\[
\begin{tikzcd}
\M_A \arrow[d,"p_i" left, shift right]\oplus t^\N \arrow[r] & \arrow[d,"p_i" left, shift right] A[t] \\
\M_A \arrow[r]\arrow[u, shift right] & A \arrow[u, shift right]
\end{tikzcd}
\]
with the map $A[t] \to A$ being $t \mapsto 0$, and the map $\M_A \oplus t^\N \to \M_A$ being $t \mapsto \ell(p_i)$. The commutativity of this diagram the implies that the map $\M_A \to \Spec A$ sends $\ell(p_i)$ to $0$.
\end{proof}

We will show in \ref{sec:basic} that the moduli space $\Mpt_{g,n}$ is represented by the moduli space ${\mathfrak{M}}_{g,n}$ of pre-stable curves, but with a different log structure from the usual one. We do this by considering the minimal objects of the category $\Mpt_{g,n}$, in the sense of \cite{Gillam2012Logarithmic-sta}. However, the resulting log structure is fairly simple; it can be seen by embedding $\Mpt_{g,n}$ as a stratum of $\mathfrak M_{g, 2n}$, as we now sketch for the benefit of the reader who prefers to skip the details of the proof. 

\begin{remark}
Let $C/S$ be a genus $g$ log $n$-pointed curve. Then one can glue rational tails to the $n$ markings, and put two markings on each of the newly created tails. Furthermore, one can assign the newly created node at marking $i$ the length $\ell(p_i)$. This gives a strict closed embedding $\Mpt_{g,n} \to \frak M_{g,2n}$ of CFGs over $\LogSch$, identifying $\Mpt_{g,n}$ with the stratum corresponding to the dual graph shown in \ref{fig:dualgraph2n}.
\begin{figure}[ht]
\begin{center}
\begin{tikzpicture}[thick,scale=0.5]
    \begin{scope}[every node/.style={circle, draw,fill=black!50,inner sep=0pt, minimum width=4pt}]
    \node (C) at (0,0) {};
    \node (A1) at (0,4) {};
    \node (A0) at (-4,0) {};
    \node (A3) at (4,0) {};
    \node (A4) at (0,-4) {};
    \node (A5) at (4,-4) {};
    \end{scope}

    \node[label={[shift={(0,-0.3)}]$n+1$}] (B1) at (1,5) {};
    \node[label={[shift={(0,-0.3)}]$1$}] (B2) at (-1,5) {};
    \node[label={[shift={(-.2,-0.4)}]$2n$}] (B0) at (-5,1) {};
    \node[label={[shift={(-.2,-0.4)}]$n$}] (Bm1) at (-5,-1) {};
    
    \node[label={[shift={(0.05,-0.5)}]$2$}] (B3) at (5,1) {};
    \node[label={[shift={(.45,-0.3)}]$n+2$}] (B4) at (5,-1) {};
    \node[label={[shift={(0.1,-0.6)}]$n-1$}] (B5) at (1,-5) {};
    \node[label={[shift={(-0.1,-0.6)}]$2n-1$}] (B6) at (-1,-5) {};

    \node[label={[shift={(0,-.4)}]$i$}] (B7) at (5,-4) {};
    \node[label={[shift={(-0.0,-0.55)}]$n+i$}] (B8) at (4,-5) {};

    \draw (B1) -- (A1) -- (C) -- (A0) -- (B0);
    \draw (B2) -- (A1);
    \draw (Bm1) -- (A0);
    \draw (C) -- (A3);
    \draw (C) -- (A4);
    \draw (B3) -- (A3) -- (B4);
    \draw (B5) -- (A4) -- (B6);
    \draw (C) -- (A5) -- (B7);
    \draw (A5) -- (B8);

    \path (A3) -- node[auto=false,rotate=90]{\huge{\ldots}} (A5);
    \path (A4) -- node[auto=false,rotate=0]{\huge{\ldots}} (A5);

  \end{tikzpicture}
  \caption{The dual graph of $C$}
  \label{fig:dualgraph2n}
\end{center}
\end{figure}
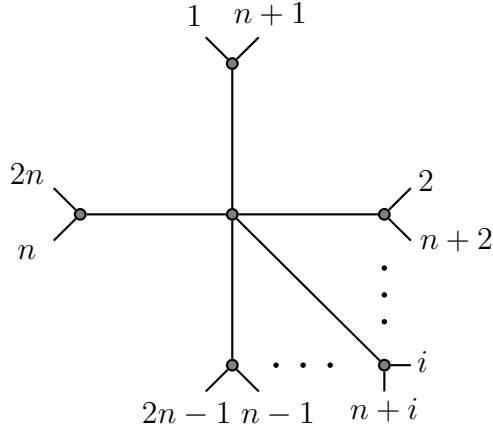
\end{remark}

\subsection{Basic objects}\label{sec:basic}
Let $C/S \in \Mpt_{g,n}$ with $S$ a geometric log point, with corresponding graph $\Gamma$ and length map $\bb N^{E(\Gamma)} \to \ghost_{S}$. We also have a natural map $\N^n \to \ghost_S$, sending $i$ to $\ell(p_i)$ as in \ref{def:ellpi}. In this way we have a natural map $\bb N^{E(\Gamma)} \oplus \bb N^n \to \ghost_S$. 
\begin{definition}
We say $C/S$ is \emph{basic} if the natural map  $\bb N^{E(\Gamma)} \oplus \bb N^n \to \ghost_S$ is an isomorphism. For a general log pointed curve, we say it is \emph{basic} if it is basic at every strict geometric point.
\end{definition}

We repeat the definition of weakly terminal from \cite{Gillam2012Logarithmic-sta}.
\begin{definition}
\label{def:weaklyterminal}
Let $W$ be a category. We call a subset $P$ of the objects of $W$ \emph{weakly terminal} if
\begin{enumerate}
  \item for every object $w \in W$ there is a map $f\colon w \to p$ with $p \in P$, and
  \item \label{def:weaklyterminal:unicity} for every object $w \in W$ and every two maps $f_i \colon w \to p_i, i \in \{1,2\}$ with $p_1, p_2\in P$ there is a unique isomorphism $g: p_1 \to p_2$ with $f_2 = g \circ f_1$.
\end{enumerate}

\end{definition}

To show that the basic curves form a weakly terminal subset of $\Mpt_{g,n}$, we study the fiber of $\Mpt_{g,n}$ over a scheme.
\begin{definition}
Let $X$ be a CFG over $\LogSch$. For $\ul{S} \in \Sch$, write $X_{\ul{S}}$ for the fiber product $X \times_{\Sch} \ul{S}$. This consists of objects $(S,Y)$ where $S$ is a log scheme with underlying scheme $\ul{S}$ and $Y$ is an object in $X(S)$, and maps $(S,Y) \to (S',Y')$ that lie over $\id: \ul{S} \to \ul{S}$. 
\end{definition}

\begin{proposition}
\label{prop:basicpseudoterminal}
Let $\ul{S} \in \Sch$. Then the basic log pointed curves form a weakly terminal set inside $(\Mpt_{g,n})_{\ul{S}}$. Furthermore, the map sending a basic log pointed curve in $(\Mpt_{g,n})_{\ul{S}}$ to the unique basic marked log curve $(\Mfrak_{g,n})_{\ul{S}}$ under it is an equivalence of setoids.
\end{proposition}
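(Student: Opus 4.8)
The plan is to reduce the entire statement to a local combinatorial analysis on strict geometric points, using the fact that a pointed log curve's log structure is controlled by its dual graph together with the lengths $\ell(p_i)$. First I would unwind what a minimal (basic) object must look like over a geometric log point: by \ref{def:ellpi} and \ref{lem:ellpizero}, the characteristic monoid $\ghost_S$ of any log pointed curve over a log point receives a canonical map $\bb N^{E(\Gamma)} \oplus \bb N^n \to \ghost_S$, the first summand coming from Kato's deformation theory of log curves (edge lengths) and the second from the lengths of the marked points. The basic condition asks precisely that this map be an isomorphism, so there is a unique candidate for the minimal object over each scheme point with a fixed underlying pointed curve; the task is to show it is weakly terminal in the sense of \ref{def:weaklyterminal}.

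Next I would verify condition (1) of \ref{def:weaklyterminal}: given an arbitrary object $(S, Y) \in (\Mpt_{g,n})_{\ul S}$, I need a map to a basic object over the same underlying scheme. Working strict-étale-locally on $\ul S$ I may assume the dual graph is constant, so I can form the basic log pointed curve $Y^{\bun}$ whose characteristic monoid sheaf is the free monoid $\bb N^{E(\Gamma)} \oplus \bb N^n$ and whose log structure on $C$ is pulled back accordingly; the universal map $\bb N^{E(\Gamma)} \oplus \bb N^n \to \ghost_S$ then induces the required morphism $(S,Y) \to Y^{\bun}$ after checking this is compatible with the log structure on $C$ and carries the logarithmic sections to the logarithmic sections. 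Here the key point is that the log structure of $C$ over $S$ (including at the marked points, where by \ref{lem:ellpizero} and Kato's local model the section contributes exactly one new generator mapping to $\ell(p_i)$) is entirely the pullback of the corresponding structure over the basic base; this is the same mechanism as in Kato's and Gillam's treatment of ordinary log curves, enlarged by the $\bb N^n$ coming from the $n$ logarithmic sections.

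Then condition (2): given two maps $(S,Y) \to p_1$, $(S,Y) \to p_2$ with $p_1, p_2$ basic, both characteristic monoids are canonically $\bb N^{E(\Gamma)} \oplus \bb N^n$ and both structure maps to $\ghost_S$ are the canonical one, so there is a unique isomorphism of monoid sheaves compatible with the maps from $Y$; I then upgrade this to a unique isomorphism of log pointed curves by noting that the log structure and the curve are determined by the characteristic data together with the underlying scheme-theoretic curve, and that the sections $p_i$ are forced to match because the $\bb N$-generators are labelled by the markings. Finally, for the last sentence, sending a basic log pointed curve to the marked log curve beneath it and then to its (unique) basic model in $(\Mfrak_{g,n})_{\ul S}$: both sides are rigid (no nontrivial automorphisms, by the uniqueness in (2) applied to $Y$ itself), the functor is essentially surjective because any basic marked log curve lifts — one simply adds the $\bb N^n$ summand and the logarithmic sections with lengths given by the new generators — and it is fully faithful since a morphism on either side is determined by its effect on underlying schemes plus the (canonically identified) characteristic data.

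The main obstacle I expect is the bookkeeping in condition (1): one must check carefully that the morphism to the basic object really is a morphism of \emph{log pointed} curves, i.e. that after pulling back the basic log structure along $\bb N^{E(\Gamma)} \oplus \bb N^n \to \ghost_S$ one recovers the given log structure on $C$ \emph{including} the logarithmic section data at each $p_i$. This is where \ref{lem:ellpizero} does real work — it guarantees that the extra $\bb N$ at $p_i$ genuinely maps to a characteristic element and is not accidentally trivialised in $\ca O_S$ — and where one uses Kato's explicit local charts (\cite{Kato2000Log-smooth-defo}, Table 1.8) to see that no further generators are needed. Everything else is formal manipulation of colimits of monoids and the standard equivalence between log structures and charts.
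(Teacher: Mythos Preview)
Your proposal is correct and follows essentially the same approach as the paper: work locally, build the basic object by adjoining $\bb N^n$ to the minimal marked log structure with the $i$th generator recording $\ell(p_i)$, and then argue uniqueness via the characteristic data. The paper's only streamlining is in condition~(2): rather than matching generators by hand, it invokes once the equivalence between sharp fs monoid sheaves over $\ghost_Y$ and log structures on $Y$ lying over $\M_Y$ (via $\ghost \mapsto \M_Y \times_{\ghost_Y} \ghost$), which packages your ``determined by characteristic data plus underlying scheme'' step into a single categorical fact.
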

\begin{proof}
First we note that it suffices to prove this after shrinking $\ul{S}$; the uniqueness will allow us to glue. 

Let $C/S$ be a log pointed curve with $S$ a log scheme with underlying scheme $\ul S$. Let $C_1/S_1$ be the unique minimal marked log curve under it. Then we can construct a different log curve structure $C_2/S_2$ on $\ul{C}/\ul{S}$ with sheaf of monoids $\M_{S_2} = \M_{S_1} \oplus \N^n$. To give this the structure of a log pointed curve we additionally specify that the map $p_i\colon S_2 \to \im p_i$ on the level of characteristic monoids is given by $\ghost_{S_2} \oplus \N \to \ghost_{S_2}$ sending $1 \in \N$ to $(0,e_i) \in \ghost_{S_1} \oplus \N^n = \ghost_{S_2}$. Shrinking $\ul S$, we can lift this to a map of log schemes. 

So, perhaps after shrinking $\ul S$, we get a map of log pointed curves $C/S \to C_2/S_2$, hence the log pointed curve $C/S$ has a map to a basic log pointed curve locally on the base. 

It remains to show that a map to a basic log pointed curve is unique up to unique isomorphism, as in \ref{def:weaklyterminal:unicity} of \Cref{def:weaklyterminal}. This follows immediately from the fact that, for a log scheme $(Y,\M_Y)$, the functor from the category of sharp fs monoid sheafs over $\ghost_Y$ to the category of log structures on $Y$ lying over $\M_Y$ given by $\ghost \mapsto \M_Y \times_{\ghost_Y} \ghost$ is an equivalence. 
\end{proof}

By \cite[Lemma~A.3]{Spelier2022The-moduli-stac}, a log pointed curve $C/S$ is minimal (see \cite{Gillam2012Logarithmic-sta} for the definition) if and only if lies in this weakly terminal set of basic log pointed curves inside $(\Mpt_{g,n})_{\ul{S}}$. 

\begin{corollary}
\label{cor:basiceqminimal}
An object in $\Mpt_{g,n}$ is basic if and only if it is minimal.
\end{corollary}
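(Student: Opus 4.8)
The plan is to read this off immediately from \ref{prop:basicpseudoterminal} and the characterisation of minimality recalled just above it. First I would recall that, in the sense of \cite{Gillam2012Logarithmic-sta}, an object of a CFG over $\LogSch$ is \emph{minimal} exactly when, for each choice of underlying scheme $\ul S$, it lies in a weakly terminal subset of the fiber category $(\Mpt_{g,n})_{\ul S}$ in the sense of \ref{def:weaklyterminal}; for the stack of log pointed curves this equivalence is \cite[Lemma~A.3]{Spelier2022The-moduli-stac}. Granting this, it suffices to know that the basic log pointed curves \emph{form} such a weakly terminal subset, and that is precisely the statement of \ref{prop:basicpseudoterminal}.

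The one bookkeeping point is that ``basic'' was defined pointwise (a log pointed curve over an arbitrary base is basic iff it is basic at every strict geometric point), whereas the weakly terminal condition in \ref{prop:basicpseudoterminal} is phrased over an arbitrary base scheme $\ul S$; but reconciling the two is exactly what \ref{prop:basicpseudoterminal} does, so nothing further is needed. To conclude, I would invoke the fact that a category has at most one weakly terminal subset up to the evident equivalence: condition~(1) of \ref{def:weaklyterminal} gives, for any object, a map to each of two such subsets, and condition~(2) then forces the two targets to be uniquely isomorphic. Hence ``basic'' and ``minimal'' single out the same objects of $\Mpt_{g,n}$.

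I do not anticipate a real obstacle here: all of the substance lies in \ref{prop:basicpseudoterminal} (and the monoid combinatorics in its proof) together with the cited lemma of \cite{Spelier2022The-moduli-stac}. The only thing I would be careful about is checking that the notion of minimality of \cite{Gillam2012Logarithmic-sta}, as used in \cite{Spelier2022The-moduli-stac}, is the same one we want when later treating $\Mptst_{g,n}$ as an algebraic stack with log structure; once that is confirmed the corollary is purely formal.
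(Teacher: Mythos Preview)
Your proposal is correct and matches the paper's approach exactly: the paper derives the corollary from the sentence immediately preceding it, which invokes \cite[Lemma~A.3]{Spelier2022The-moduli-stac} to identify minimal objects with those in the weakly terminal set, and then \ref{prop:basicpseudoterminal} identifies that set with the basic objects. Your additional remarks on uniqueness of weakly terminal subsets and the pointwise-versus-global reconciliation are sound but already absorbed into the cited results.
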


\begin{corollary}
The CFG $\Mpt_{g,n}/\LogSch$ satisfies the conditions of the Descent Lemma of \cite{Gillam2012Logarithmic-sta} and therefore is represented by an algebraick stack $\ul{\Mpt_{g,n}}$ with log structure. The forgetful map $\Mpt_{g,n} \to \frak M_{g,n}$ induces an isomorphism $\ul{\Mpt_{g,n}} \to \ul{\frak M_{g,n}}$ on underlying algebraic stacks.
\end{corollary}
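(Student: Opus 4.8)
The plan is to verify the hypotheses of the Descent Lemma of \cite{Gillam2012Logarithmic-sta} for the CFG $\Mpt_{g,n}/\LogSch$, and then to read off the statement about underlying algebraic stacks from the explicit description of minimal objects obtained in \ref{prop:basicpseudoterminal}.

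We have already recorded that $\Mpt_{g,n}/\LogSch$ is a CFG. By \ref{prop:basicpseudoterminal} together with \ref{cor:basiceqminimal}, for every scheme $\ul S$ the minimal (equivalently, basic) objects form a weakly terminal set inside the fibre $(\Mpt_{g,n})_{\ul S}$, which supplies the existence-of-minimal-objects hypothesis. Next I would check that minimality is stable under strict base change: this is immediate from the construction in the proof of \ref{prop:basicpseudoterminal}, where the minimal log structure of a basic log pointed curve over $\ul S$ is exhibited as $\M_{S_1} \oplus \N^n$, with $\M_{S_1}$ the minimal log structure of the underlying minimal marked curve and the $\N^n$-factor recording the lengths $\ell(p_i)$; both summands are visibly compatible with arbitrary, in particular strict, base change in $\ul S$.

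The remaining input is that the CFG $\ca P$ of minimal log pointed curves over $\Sch$, with strict morphisms, is an algebraic stack. For this I would use the second assertion of \ref{prop:basicpseudoterminal}: the functor taking a basic log pointed curve over $\ul S$ to the unique minimal marked log curve beneath it is an equivalence of setoids, and this is natural in $\ul S$, hence assembles into an equivalence of CFGs $\ca P \to \ca P'$ over $\Sch$, where $\ca P'$ is the CFG of minimal marked log curves. But applying the same Descent Lemma to $\frak M_{g,n}$ identifies $\ca P'$ with the underlying algebraic stack $\ul{\frak M_{g,n}}$, the stack of prestable $n$-pointed genus $g$ curves; hence $\ca P$ is an algebraic stack. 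The Descent Lemma then produces $\ul{\Mpt_{g,n}}$ as the algebraic stack underlying $\ca P$, equipped with its tautological log structure, and the equivalence $\ca P \simeq \ca P' \simeq \ul{\frak M_{g,n}}$ gives the asserted isomorphism of underlying algebraic stacks, compatibly with the forgetful functor $\Mpt_{g,n} \to \frak M_{g,n}$ by construction.

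The main obstacle is the bookkeeping in this last step: one must check that the fibrewise ``equivalence of setoids'' of \ref{prop:basicpseudoterminal} is genuinely natural in $\ul S$, so that it upgrades to an equivalence of fibered categories, and that the Descent Lemma delivers $\ul{\Mpt_{g,n}}$ precisely as the algebraic stack underlying $\ca P$. Once these compatibilities are pinned down, identifying $\ul{\Mpt_{g,n}}$ with $\ul{\frak M_{g,n}}$ and verifying compatibility with the forgetful map are formal; the passage between ``basic'' and ``minimal'' via \cite[Lemma~A.3]{Spelier2022The-moduli-stac} has already been carried out in the lead-up to \ref{cor:basiceqminimal}.
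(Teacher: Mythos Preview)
Your approach is essentially the same as the paper's: verify Gillam's hypotheses via \ref{prop:basicpseudoterminal} and \ref{cor:basiceqminimal}, then read off the identification of underlying stacks from the equivalence of setoids in \ref{prop:basicpseudoterminal}. You are in fact more explicit than the paper about how the isomorphism $\ul{\Mpt_{g,n}} \cong \ul{\frak M_{g,n}}$ arises, which is a virtue.

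There is one small gap. You verify that minimality is \emph{stable under strict base change}, but Gillam's Descent Lemma (as the paper states it) needs the full biconditional: if $C/S$ is minimal and $f\colon S' \to S$ is any morphism of log schemes, then $C_{S'}/S'$ is minimal \emph{if and only if} $f$ is strict. The ``only if'' direction is what guarantees that all morphisms between minimal objects are strict, so that the tautological log structure on the stack of minimal objects is well-defined. The paper handles this by the standard decomposition: any $f\colon S' \to S$ factors as a strict morphism followed by one lying over $\id_{\ul{S'}}$, and then the weakly terminal property from \ref{prop:basicpseudoterminal} forces the second factor to be an isomorphism whenever both source and target are basic. You should add this half-line; once you do, the argument is complete and matches the paper's.
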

\begin{proof}
In order to satisfy the Descent Lemma of \cite{Gillam2012Logarithmic-sta} we need the following two conditions to hold.
\begin{enumerate}
\item Every log pointed curve $C/S$ has a map to a minimal object $C'/S'$ with the induced map $\ul{S} \to \ul{S'}$ being $\id_{\ul{S}}$.
\item Let $C/S$ be a minimal log pointed curve, and let $f: S' \to S$ be a map of log schemes. Then $C_{S'}/S'$ is minimal if and only if $f$ is strict.
\end{enumerate}
By \Cref{cor:basiceqminimal}, we can instead prove these claims for the basic objects of our category.
The first condition holds by \Cref{prop:basicpseudoterminal}. To prove the second condition, we first note that being basic is a condition on strict geometric points, and hence is retained under strict pullback. Then by \Cref{prop:basicpseudoterminal} and the fact that every morphism decomposes as a strict morphism and a morphism lying over the identity of the source, the second condition holds as well.

Then by the Descent Lemma $\Mpt_{g,n}$ is represented by the stack of minimal objects $\ul{\Mpt_{g,n}}$ in $\Mpt_{g,n}$ together with its canonical log structure $\ul{\Mpt_{g,n}} \to \LogSch$ factoring through the inclusion in $\Mpt_{g,n}$.
\end{proof}

\begin{corollary}
The stack $\ul{\Mpt}_{g,n}$ is of dimension $3g-3 + n$. The stratum corresponding to a dual graph $\Gamma = (V,E)$ has generic characteristic log structure $\N^E \oplus \N^n$. 
\end{corollary}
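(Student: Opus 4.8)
The plan is to read off both assertions from the preceding corollary, which identifies $\ul{\Mpt}_{g,n}$ with the stack $\ul{\Mfrak}_{g,n}$ of $n$-pointed genus $g$ prestable curves and exhibits its log structure as the canonical (minimal) one furnished by the Descent Lemma of \cite{Gillam2012Logarithmic-sta}.

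For the dimension, I would simply invoke the isomorphism $\ul{\Mpt}_{g,n} \iso \ul{\Mfrak}_{g,n}$ of the preceding corollary: the stack $\Mfrak_{g,n}$ of $n$-pointed genus $g$ prestable curves is smooth of dimension $3g-3+n$, as it is obtained from $\Mfrak_{g}$ (smooth of dimension $3g-3$) by $n$ successive passages to the universal curve, each raising the dimension by one. Hence $\ul{\Mpt}_{g,n}$ has dimension $3g-3+n$.

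For the characteristic log structure, I would argue as follows. By the preceding corollary the log structure on $\ul{\Mpt}_{g,n}$ is the one whose restriction to any strict geometric point $S$ with fibre $C/S$ is the \emph{basic} one, using \ref{cor:basiceqminimal} to pass from \emph{minimal} to \emph{basic}. By the very definition of basic, if $C$ has dual graph $\Gamma=(V,E)$ then the characteristic monoid $\ghost_S$ at that point is $\N^{E(\Gamma)}\oplus\N^n$. Now fix a dual graph $\Gamma$ and take the generic point of the locally closed stratum of $\ul{\Mfrak}_{g,n}\cong\ul{\Mpt}_{g,n}$ parametrising curves with dual graph $\Gamma$; a curve over that generic point has dual graph exactly $\Gamma$, so the generic characteristic monoid on the stratum is $\N^{E(\Gamma)}\oplus\N^n$, as claimed. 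In particular the open dense stratum, where $\Gamma$ has no edges, has characteristic $\N^n$, matching the remark in the introduction that the log structure has generic rank $n$.

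The only point needing any care --- and it is minor rather than a genuine obstacle --- is to check that the dual-graph stratification of $\ul{\Mfrak}_{g,n}$ transports along the isomorphism to a stratification of $\ul{\Mpt}_{g,n}$ on each piece of which the characteristic sheaf $\ghost$ is locally constant with the stated value. This is immediate once one knows the underlying stacks agree and the log structure is the basic one: being basic is a pointwise condition on strict geometric points, the dual graph is constant along each stratum, and so $\ghost$ is locally constant there, equal to $\N^{E(\Gamma)}\oplus\N^n$ at the generic point. No computation beyond this bookkeeping is required, everything else having been prepared by \ref{prop:basicpseudoterminal}, \ref{cor:basiceqminimal} and the preceding corollary.
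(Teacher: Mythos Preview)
Your proposal is correct and matches the paper's intent: the paper states this corollary without proof, treating both claims as immediate consequences of the preceding corollary (the isomorphism $\ul{\Mpt}_{g,n}\cong\ul{\Mfrak}_{g,n}$ and the identification minimal $=$ basic). Your write-up simply spells out the bookkeeping the paper leaves implicit, and there is nothing to add.
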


\begin{remark}
\label{rem:mpt03}
The moduli space $\Mptst_{0,3}$ is a point with log structure $\N^3$. This scheme has log blowups of dimension $2$, and hence $\LogCH^*(\Mptst_{0,3})$ has non-trivial graded pieces of degree $1$ and $2$. In fact, $\LogCH^1(\Mptst_{0,3})$ and $\LogCH^2(\Mptst_{0,3})$ are infinite dimensional. In degree $0$, we still have $\LogCH^0(\Mptst_{0,3}) = \Q$.
\end{remark}

\begin{remark}
The stack $\Mpt_{g,n}$ is idealised log smooth over a point with trivial idealised log structure. On the level of characteristic monoids we have $\ghost_{\Mpt_{g,n}} = \ghost_{\Mbar_{g,n}} \oplus \N^n$. However, it is not true that $\M_{\Mpt_{g,n}} = \M_{\Mbar_{g,n}} \oplus \N^n$ (cf. \Cref{lem:psiclasscombinatorial}).
\end{remark}

\subsection{Comparison to punctured log curves}
\label{subsec:punctured}
In this section, we will show that a log pointed curve can naturally be given the structure of a \emph{punctured} log curve. A puncturing of a log scheme $Y = (\ul{Y},\M_Y)$ is a different log scheme $Y^\punc = (\ul{Y},\M_Y^\punc)$ with the same underlying scheme, and where $\M_Y \subset \M_Y^\punc \subset \M_Y^\gp$ and $\M_Y^\punc$ satisfies certain extra conditions, see \cite[Definition~2.1]{Abramovich2020Punctured-logar}. The stack of punctured log curves $\breve{M}_{g,n}$ is constructed in \cite[2.10]{Abramovich2020Punctured-logar}.

\begin{definition}
\label{def:puncturedcurve}
Let $(C/S,p_1,\dots,p_n)$ be a log pointed curve. Let $\M$ be the verticalisation of the log structure on $C$, and let ${\sf P}$ be the log structure on $\ul{C}$ with respect to the divisor $\sum_i p_i$, so that $\M_C = \M \oplus_{\ca O^\times} {\sf P}$. Write $1_i$ for the section in ${\sf P}$ corresponding to $p_i$. Then we define a puncturing $C^{\punc}$ of $C$ along ${\sf P}$ to be the submonoid of $\M \oplus_{\ca O^\times} {\sf P}^\gp$ generated by $\M \oplus {\sf P}$ and $(\ell_i, -1_i)$. We define a map $\alpha\colon \M_{C^\punc} \to \ca O_C$ by sending $(\ell_i, -1_i)$ to $0$. 
\end{definition}
\begin{proof}
To check that this is a puncturing, we need to check that for every geometric point $x \in C$ we have that if $(\ell_i, -1_i)_x \in (\M_{C^\punc})_x$ is not in $(\M \oplus {\sf P})_x$ then $\alpha(\ell_i)$ is zero. This follows from \ref{lem:ellpizero}.
\end{proof}

From the definition, we immediately get the following lemma.
\begin{lemma}
\label{lem:puncgroupification}
The map $\M_{C} \to \M_{C^\punc}$ is an injective map of sheaves, and an isomorphism after groupification.
\end{lemma}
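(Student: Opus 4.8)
The statement to prove is \ref{lem:puncgroupification}: that the natural map $\M_C \to \M_{C^\punc}$ is an injective map of sheaves of monoids which becomes an isomorphism after groupification. The approach is entirely local and follows directly from the explicit description of $C^\punc$ given in \ref{def:puncturedcurve}. The plan is to work on geometric stalks, where all the monoids involved become explicit finitely generated monoids, and read off both claims from the generators.

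\textbf{Key steps.} First I would reduce to geometric stalks: a map of sheaves of monoids is injective (resp. an isomorphism after groupification) if and only if this holds on every geometric stalk, so fix a geometric point $x \in C$. Second, recall the setup: $\M_C = \M \oplus_{\ca O^\times} \sf P$, and by construction $\M_{C^\punc}$ is the submonoid of $\M \oplus_{\ca O^\times} \sf P^\gp$ generated by $\M \oplus \sf P$ together with the elements $(\ell_i, -1_i)$ for those $i$ with $x \in p_i(S)$. For injectivity: the map $\M_C \to \M_{C^\punc}$ is simply the inclusion of $\M \oplus_{\ca O^\times} \sf P$ into the larger submonoid of $\M \oplus_{\ca O^\times} \sf P^\gp$; since $\sf P \hookrightarrow \sf P^\gp$ is injective (as $\sf P$ is an integral — indeed fs — log structure, being the divisorial log structure of $\sum_i p_i$), the inclusion $\M \oplus_{\ca O^\times}\sf P \hookrightarrow \M \oplus_{\ca O^\times} \sf P^\gp$ is injective, and $\M_{C^\punc}$ sits between them, giving injectivity of $\M_C \to \M_{C^\punc}$. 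Third, for the groupification statement: both $\M_C^\gp$ and $\M_{C^\punc}^\gp$ are subgroups of $\M^\gp \oplus_{\ca O^\times} \sf P^\gp$; the group $\M_{C^\punc}^\gp$ is generated by $\M_C^\gp$ together with the elements $(\ell_i,-1_i)$, but each such element already lies in $\M_C^\gp$ since $(\ell_i, 1_i)$ and $(0, 2\cdot 1_i) = (0,1_i)+(0,1_i)$ are in $\M_C$ so $(\ell_i,-1_i) = (\ell_i,1_i) - (0, 2\cdot 1_i) \in \M_C^\gp$ — more simply, $(\ell_i, -1_i) = (\ell_i, 1_i) - (0, 1_i) - (0,1_i)$ with all three terms in $\M_C$. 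Hence $\M_{C^\punc}^\gp = \M_C^\gp$, which is the claim.

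\textbf{Main obstacle.} There is no serious obstacle here; the lemma is essentially a restatement of \ref{def:puncturedcurve}. The only point requiring a little care is to verify that the ``extra'' generators $(\ell_i, -1_i)$ of $\M_{C^\punc}$ genuinely lie in the groupification of $\M_C$ — i.e.\ that one has not enlarged the group — which as sketched above is immediate once one writes $(\ell_i,-1_i)$ as a difference of elements of $\M \oplus \sf P \subseteq \M_C$. I would also remark that one should take the stalk description of $\sf P$ at $x$ into account: $\sf P_x = \ca O^\times_x \oplus \N$ if $x$ lies on exactly one section $p_i$ and $\sf P_x = \ca O^\times_x$ otherwise (the sections being disjoint), so at most one generator $(\ell_i,-1_i)$ is relevant at any given stalk, which keeps the bookkeeping trivial.
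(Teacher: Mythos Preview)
Your proposal is correct and takes essentially the same approach as the paper, which simply states that the lemma follows immediately from \ref{def:puncturedcurve} without further elaboration. Your explicit stalk-local unpacking (injectivity via $\sf P \hookrightarrow \sf P^\gp$ for integral $\sf P$, and the observation that each extra generator $(\ell_i,-1_i)$ is a difference of elements of $\M \oplus_{\ca O^\times} \sf P$) is exactly the content implicit in the paper's one-line justification.
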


\begin{remark}
Note that there is a natural map $C^\punc \to C$, on the level of log structures near $p_i$ given by the inclusion $\M \oplus \N \to (\M \oplus \N)\angle{(\ell_i,-1)}$.
\end{remark}
\begin{definition}
Let $(C/S,p_1,\dots,p_n)$ be a log pointed curve. For every $p_i$, let $\bar{p_i}: \ghost_{C^\punc,p_i} \to \ghost_S$ denote the map given by sending $(m,n) \in (\ghost \oplus \N)\angle{(\ell_i,-1)}$ to $m + n\ell_i$.
\end{definition}

\begin{remark}
The map $\bar{p_i}$ in the previous definition is not sharp, as it sends $(\ell_i,-1)$ to $0$. Hence the map $p_i^*$ is not induced from a log section $p_i: S \to C^\punc$. Put otherwise, the log section $p_i: S \to C$ does not factor through $C^\punc \to C$. In \ref{subsec:pierced} we explore a slightly different variant of $C^\punc$ that does allow a lift of the log section $p_i$.
\end{remark}

\begin{proposition}
\Cref{def:puncturedcurve} defines a log monomorphism $\Mpt_{g,n} \to \breve{M}_{g,n}$, mapping $C/S$ to $C^{\punc}/S$.
\end{proposition}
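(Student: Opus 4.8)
The plan is to verify the two assertions: (1) that $C \mapsto C^\punc$ defines a morphism of CFGs over $\LogSch$ from $\Mpt_{g,n}$ to $\breve M_{g,n}$, and (2) that this morphism is a log monomorphism. For (1), I would first check that the construction of $C^\punc$ in \ref{def:puncturedcurve} is functorial: given a morphism $(f,\phi)\colon (C'/S',p'_i) \to (C/S,p_i)$ of log pointed curves, the isomorphism $\phi\colon C' \to C\times_S S'$ is strict, so it identifies the verticalisation $\M$ and the divisorial log structure $\sf P$ (together with the distinguished sections $1_i$) on $C'$ with the pullbacks of those on $C$; since the submonoid generated by $\M \oplus \sf P$ and the elements $(\ell_i,-1_i)$ is defined in these intrinsic terms, $\phi$ induces an isomorphism $C'^\punc \cong C^\punc \times_S S'$, and one checks the structure map $\alpha$ is respected because it is pinned down by sending $(\ell_i,-1_i)$ to $0$. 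The verification that $C^\punc/S$ really is a punctured curve in the sense of \cite[2.10]{Abramovich2020Punctured-logar} was already carried out in the proof following \ref{def:puncturedcurve} (via \ref{lem:ellpizero}), and I would additionally note that $C^\punc \to S$ remains a log curve away from the punctures and that the puncturing is supported on the sections $p_i$, as required.

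For (2), recall that a log monomorphism is a morphism of log stacks that is a monomorphism of the underlying stacks and strict. Strictness is not what we want here — rather, "log monomorphism" should be read as: the morphism is representable and a monomorphism in the $2$-categorical sense, i.e. the diagonal $\Mpt_{g,n} \to \Mpt_{g,n}\times_{\breve M_{g,n}} \Mpt_{g,n}$ is an isomorphism. I would prove this by exhibiting an explicit retraction on the level of objects: given a punctured curve in the image, one recovers the original log pointed curve $C/S$ from $C^\punc/S$ by \ref{lem:puncgroupification}, which says $\M_C \hookrightarrow \M_{C^\punc}$ is injective with the same groupification, so $\M_C$ is determined as a subsheaf of $\M_{C^\punc}$ — concretely, $\M_C$ is the saturation-free submonoid not containing the negative elements $(\ell_i,-1_i)$, equivalently $\M_C = \M_{C^\punc} \cap (\M \oplus \sf P)$ inside $\M_{C^\punc}^\gp$. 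Hence any two log pointed curves over $S$ with isomorphic (over $S$) punctured curves are themselves isomorphic, and the isomorphism is unique since it is determined on underlying curves; this gives full faithfulness of the functor, which for CFGs over $\LogSch$ is exactly the statement that the map is a monomorphism.

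The main obstacle I anticipate is checking that the morphism is representable and, more subtly, getting the bookkeeping right on $\M_C^\punc$ versus $\M_C$ on non-basic (non-minimal) objects, where the characteristic monoid at a node interacts with the punctured structure near a marking that has collided with the node; one must make sure the submonoid description of $\M_C$ inside $\M_{C^\punc}$ is stable and really is intrinsic to $C^\punc/S$ and not just to its minimal model. Concretely I would reduce to geometric log points using \ref{cor:basiceqminimal} and the fact that basic objects form a weakly terminal set (\ref{prop:basicpseudoterminal}), check the claim there by the explicit monoid computation $\ghost_S = \N^{E(\Gamma)}\oplus\N^n$, and then propagate to general $S$ by the descent/uniqueness already used in \ref{sec:basic}.
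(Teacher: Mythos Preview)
The paper states this proposition without proof, so there is no argument to compare against directly. Your plan for part (1), functoriality, is fine and essentially complete.

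For part (2) there is a gap you have not addressed. Recovering $\M_C$ from $\M_{C^\punc}$ via \ref{lem:puncgroupification} gives back the log curve $C$, but a log pointed curve carries the additional datum of the \emph{log} sections $p_i\colon S \to C$, and you do not explain how these are recovered from the punctured curve. The puncturing is obtained by adjoining to $\M_C$ an element whose image in $\ghost_S \oplus \bb Z$ is $(\ell_i,-1)$; since any two lifts of such an element to $\M \oplus_{\ca O^\times} {\sf P}^\gp$ differ by a unit, the resulting submonoid $\M_{C^\punc}$ (together with its structure map $\alpha$) depends only on $\ell_i \in \ghost_S$, not on the specific lift $p_i^*(1_i) \in \M_S$ that determines the log section. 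Thus two log pointed curves over $S$ with the same underlying log curve and equal $\ell_i$ but distinct lifts $p_i^*(1_i)$ produce the same punctured curve; and since automorphisms of $C/S$ that are the identity on $\ul C$ act trivially on $\sf P$ (as $\alpha|_{\sf P}$ is injective), such curves need not be isomorphic in $\Mpt_{g,n}(S)$. Your sentence ``the isomorphism is unique since it is determined on underlying curves'' does not close this gap: it concerns faithfulness, not fullness.

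To complete the argument you must either locate additional structure in the objects or morphisms of $\breve M_{g,n}$ that pins down the log sections (check precisely what data the moduli functor in \cite[2.10]{Abramovich2020Punctured-logar} records), or determine whether ``log monomorphism'' is intended in a weaker sense than fully faithful on $T$-points for all log schemes $T$---for instance, as a statement about the map of underlying algebraic stacks parametrising minimal objects.
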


\subsection{Evaluation maps}
\label{subsec:evaluation}

One of the major advantages of log pointed curves is the existence of evaluation maps for moduli spaces of stable maps of log curves. 

\begin{definition}
Let $X$ be a log stack, and let $\beta$ be a class of stable log maps to $X$ as in \cite[Definition~3.1]{Gross2013Logarithmic-gro}. We let $\Mptst_{g,n}(X, \beta)$ denote the moduli space with $S$-points diagrams of the form
\[
\begin{tikzcd}
C \arrow[d, "\pi"] \arrow[r, "f"] & X \\
S &
\end{tikzcd}
\]
where $(C/S, p_1, \dots, p_n)$ is a genus $g$ log $n$-pointed curve of genus $g$ and $f$ is a stable map such that $f$ is of class $\beta$. 

We define the evaluation map of log stacks as
\begin{align*}
\ev: \Mptst_{g,n}(X, \beta) &\to X^n \\
(C/S, p_1, \dots, p_n , f) &\mapsto (f \circ p_1, \dots, f \circ p_n). 
\end{align*}
\end{definition}

\begin{remark}
In \cite{Gross2013Logarithmic-gro} Gross and Siebert defined moduli spaces of stable maps of log curves $\Mbar^\log(X, \beta)$, without log sections. The evaluation map $\Mbar^\log(X, \beta) \to X^n$ is only defined on the level of underlying schemes, and does not in general admit a lift to the level of log schemes. This makes it difficult to work with insertions from $\LogCH(X)$. 
\end{remark}

\begin{remark}
In \cite{Ranganathan2023Logarithmic-Gromov-Witten} this problem of log evaluation maps is solved for toric targets by \emph{removal of log structure on the target} (whereas we add extra log structure on the source). For a marking $i$ they study maps into a fixed stratum $W_i$ of a toric target $X$, but equip $W_i$ with its natural log structure as a toric variety, \emph{not} the log structure coming from $X$. This yields, for each $i$, a logarithmic evaluation map $\Mbar^\log(X, \beta) \to W_i$. A comparison to the approach we take here will appear in forthcoming work of the first-named author with Leo Herr. 
\end{remark}

\subsection{Forgetting a marking}\label{sec:forget_marking}

We first define a map $\Mpt_{g,n+1} \to \Mpt_{g,n}$ forgetting the last marking. Let $(C/S, p_1, \dots, p_{n+1})$ be an $n+1$ log pointed curve. We define $C'/S$ and an $S$-map $\tau\colon C \to C'$ by declaring $\tau$ to be an isomorphism on the underlying curves, and on the log structures away from $p_{n+1}$; and on a neighbourhood of $p_{n+1}$ we define the log structure on $C'$ to be the kernel of the `slope' map $s_{n+1}\colon \M_{C, p_{n+1}} \to \bb N$ obtained as the composite
\begin{equation}
\M_{C, p_{n+1}}\to \ghost_{C, p_{n+1}} = \ghost_S \oplus \bb N \to \bb N. 
\end{equation}
Define $q_i = \tau \circ p_i$. Equipping $C'$ with the sections $q_1, \dots, q_n$ gives an $S$-point of $\Mpt_{g,n}$. 

Suppose now that $2g-2 + n > 0$. To build a map 
\begin{equation}
\Mptst_{g,n+1} \to \Mptst_{g,n}
\end{equation}
we must work a little harder. Let $(C/S, p_1, \dots, p_{n+1})$ be an $n+1$ stable log pointed curve, and define an $n$-log pointed curve $(C'/S, q_1, \dots, q_n)$ as before, which may not be stable. We build a stabilisation by reducing to the marked case. We write $(C'/S,\ul q_1, \dots, \ul q_n)$ for the $n$-marked log curve obtained from $(C'/S, q_1, \dots, q_n)$ by forgetting that the sections are log maps. 
\begin{lemma}
The marked log curve $(C'/S,\ul q_1, \dots, \ul q_n)$ has a stabilisation $(C^{st}/S,\ul q'_1, \dots, \ul q'_n)$ and a log map $\sigma\colon C' \to C^{st}$. 
\end{lemma}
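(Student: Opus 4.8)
The plan is to construct the stabilisation on underlying schemes first, using the classical theory, and then show that the log structure behaves well enough to produce the claimed log map $\sigma$. Classically, the forgetful map $\underline{\mathfrak M}_{g,n+1}\to\underline{\mathfrak M}_{g,n}$ (or rather its restriction to the stable locus in the target, using $2g-2+n>0$) is the universal curve, and stabilisation $(\underline C'/\underline S,\underline q_1,\dots,\underline q_n)\rightsquigarrow(\underline C^{\mathrm{st}}/\underline S,\underline q'_1,\dots,\underline q'_n)$ exists together with a proper birational $\underline S$-morphism $\underline\sigma\colon\underline C'\to\underline C^{\mathrm{st}}$ contracting precisely the unstable (genus-zero, $\le 2$-special-point) components. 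So the content is entirely about the logarithmic structure: I must equip $\underline C^{\mathrm{st}}$ with a log structure making it a log curve over $S$ with log sections $q'_i$, in such a way that $\underline\sigma$ lifts to a morphism of log schemes $C'\to C^{\mathrm{st}}$.

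First I would reduce to the case where $\underline\sigma$ contracts a single rational bridge or rational tail, since the general stabilisation is a composite of such elementary contractions and the construction will be manifestly compatible with composition. In the rational-tail case nothing happens to the log structure away from the contracted tail, and near the image point the log structure of $C^{\mathrm{st}}$ is simply the pullback of $\M_S$ made into a log curve structure — this is the ``forgetting a marking on the underlying curve'' situation and the relevant local model is Kato's chart. In the rational-bridge case, the contraction replaces a chain of two nodes by a single node, and the correct log structure on $C^{\mathrm{st}}$ at the new node has characteristic monoid $\bb N$ with length equal to the sum of the two lengths of the contracted nodes; one checks using Kato's local description \cite{Kato2000Log-smooth-defo} (Table 1.8) that this is the unique log curve structure on $\underline C^{\mathrm{st}}$ for which $\underline\sigma$ is a log morphism, and that the log sections $q_i$ descend to log sections $q'_i$ (here we use that $q_i$, being a marking, has image in the smooth locus disjoint from the contracted locus, so nothing subtle happens to the section). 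That $C^{\mathrm{st}}/S$ is again log smooth, integral and saturated follows from the local models, and $n$-pointedness (verticality precisely outside the $q'_i$) is checked componentwise.

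The main obstacle I anticipate is verifying the universal property — that the pair $(C^{\mathrm{st}}/S,\sigma)$ is functorial in $S$ and therefore glues from the local/étale-local constructions to give the desired map of stacks $\Mptst_{g,n+1}\to\Mptst_{g,n}$. Étale-locally on $\underline S$ the construction is forced (uniqueness of the minimal/basic log curve structure under a given one, as used in \ref{prop:basicpseudoterminal}), so the glueing is automatic once one phrases the stabilised log curve as the minimal log curve structure on $\underline C^{\mathrm{st}}$ compatible with $\underline\sigma$; then descent along the strict étale topology gives the global object. A secondary point to check carefully is stability of $C^{\mathrm{st}}$: classical stabilisation guarantees the underlying pointed curve is stable, and the log structure does not affect stability, so this is immediate, but one should record that the hypothesis $2g-2+n>0$ is what makes the stable model exist (for $(g,n)=(0,2),(1,0)$ there is nothing to stabilise to, matching the hypotheses of the statement).
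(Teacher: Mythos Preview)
Your approach is workable in outline but takes a substantially more laborious route than the paper, and contains a couple of inaccuracies in the rational-tail case that you would need to fix.

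The paper's proof is a one-line global argument: the log structure on $\Mbar_{g,n}$ is the divisorial one from its boundary, and the log structure on the universal curve $C_{g,n}$ is the divisorial one from the union of the sections and the pullback of the boundary. Under the classical identification $\Mbar_{g,n+1}\cong C_{g,n}$ these two normal-crossings divisors agree (both are the boundary of $\Mbar_{g,n+1}$), so the identification is an isomorphism of \emph{log} stacks, and composing with $C_{g,n}\to\Mbar_{g,n}$ gives the log forgetful map $\Mbar_{g,n+1}\to\Mbar_{g,n}$. The stabilised log curve $C^{\mathrm{st}}$ and the log contraction $\sigma$ then come for free from the universal families, since everything in sight carries a divisorial log structure and the schematic maps respect the divisors. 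No local chart analysis is needed.

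By contrast you propose to build $\sigma$ by hand, one elementary contraction at a time, writing down the log structure on $C^{\mathrm{st}}$ from Kato's local models and then arguing gluing via minimality. This would eventually succeed, but two of your details are off. In the rational-tail case the tail carries not only the forgotten marking $p_{n+1}$ but also one surviving marking (say $q_n$), so your parenthetical claim that ``$q_i$ has image in the smooth locus disjoint from the contracted locus'' is false for that marking; after contraction $q'_n$ lands exactly at the image of the old node. Correspondingly, the log structure of $C^{\mathrm{st}}$ near that image point is not ``the pullback of $\M_S$'' but the standard marked-point log structure $\M_S\oplus_{\ca O^\times}{\sf P}$ coming from the divisor $q'_n$. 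Your rational-bridge description (new node of length $\ell_1+\ell_2$) is correct. Also note the lemma concerns the schematic markings $\ul q_i$, not log sections, so speaking of ``log sections $q'_i$ descending'' overstates what needs to be checked.

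The trade-off: your approach makes the local structure of $\sigma$ explicit (and indeed the paper immediately afterwards analyses the leg lengths under contraction, which your computation would feed into directly), whereas the paper's divisorial observation bypasses all of this and delivers existence in one stroke.
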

\begin{proof}
The log structure on $\Mbar_{g,n}$ is the divisorial one coming from its boundary divisor, and the log structure on the universal curve $C_{g,n}$ is the divisorial one coming from the union of the sections with the inverse image of the boundary from $\Mbar_{g,n}$. This corresponds exactly to the boundary divisor in $\Mbar_{g,n+1}$ under the standard identification of $C_{g,n}$ with $\Mbar_{g, n+1}$. In particular, the natural isomorphism $\Mbar_{g, n+1} \to C_{g,n}$ is an isomorphism of log stacks, and yields by composition a log map $\Mbar_{g, n+1} \to \Mbar_{g.n}$. 
\end{proof}

We define a map
\begin{equation}
\Mptst_{g,n+1} \to \Mptst_{g,n}
\end{equation}
by sending $(C/S, p_1, \dots, p_{n+1})$ to 
\begin{equation}
(C^{st}/S, \sigma \circ q_1 , \dots, \sigma \circ q_n). 
\end{equation}

We also analyse what happens to the length of legs and edges under contraction. Fix an $n+1$ log pointed curve $(C/S, p_1, \dots, p_{n+1})$. Suppose that $S$ is atomic strictly Hensellian local (or, more generally, that $C/S$ is nuclear in the sense of \cite{holmes2020models}). 

Suppose first that $p_{n+1}$ is the only marking on a rational bridge, and that $\ell_1$ and $\ell_2$ are the lengths of the edges connecting the bridge to the remainder of the graph. Then in $C^{st}$ the rational bridge is contracted, and replaced by a single edge of length $\ell_1 + \ell_2$. 

The remaining case is a contracted rational tail. Suppose that, on the closed fibre, $p_{n+1}$ and $p_n$ are together on a rational tail, which carries no other markings. We write $\ell_e\in \ghost_S(S)$ for the length of the edge attaching the rational tail, and (as always) write $\ell_i$ for the length of leg $p_i$. Following the above notation, write $(C^{st}/S, q_1', \dots, q_n')$ for the stabilisation, and $\ell_i'$ for the length of $q_i'$. 

\begin{lemma}
For $1 \le i \le n-1$ we have $\ell_i = \ell_i'$, and  
\begin{equation}
\ell_n' = \ell_n + \ell_e. 
\end{equation}
\end{lemma}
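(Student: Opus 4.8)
The plan is to work locally on the atomic base $S$ and reduce everything to a computation with characteristic monoids, using the explicit description of the contraction morphism $\sigma\colon C' \to C^{\mathsf{st}}$ on the relevant charts. First I would recall that by construction $C'/S$ agrees with $C/S$ on underlying curves and on log structures away from $p_{n+1}$, and that the stabilisation contracts the rational tail $T$ carrying $p_n$ and $p_{n+1}$. Away from this tail the map $\sigma$ is an isomorphism of log curves, so for $1 \le i \le n-1$ the section $q_i'$ is literally $\sigma \circ q_i = q_i$ under this identification, and hence $\ell_i' = \ell_i$; this is the easy half.

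For the interesting equality $\ell_n' = \ell_n + \ell_e$, I would zoom in near the attaching node $\nu$ of the rational tail. In a neighbourhood of $\nu$, the marked curve $C'$ (forgetting log sections) has characteristic monoid $\ghost_S \oplus \N$ with the $\N$-generator $\delta$ mapping to the edge length $\ell_e$ under the length map; on the rational component $T$ itself the marking $p_n$ contributes a further $\N$-summand with generator mapping to $\ell_n$, exactly as in \ref{def:ellpi}. After stabilisation the component $T$ disappears: the node $\nu$ and the leg $p_n$ are, so to speak, merged into a single leg $q_n'$ of the stabilised curve. The claim is then precisely that the length of this merged leg is the sum $\ell_e + \ell_n$.

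To make this precise I would use the standard local model for the universal curve / contraction from \cite{Kato2000Log-smooth-defo} (the same chart computation used in the proof of \ref{lem:ellpizero}): near $\nu$ the curve $C'$ looks like $\Spec A[x,y]/(xy - t)$ with $t \mapsto \ell_e$, the tail $T$ being the branch $y = 0$ say, with the marking $p_n$ sitting at a point $x = 0$ on $T$ contributing the coordinate with $\mathrm{val} \mapsto \ell_n$. Contracting $T$ replaces this configuration by a smooth branch on which the marked point $q_n'$ is cut out by the coordinate $x$, and tracing the induced map on characteristic monoids shows the slope-$1$ generator at $q_n'$ pulls back to the generator at $\nu$ plus the generator at $p_n$, i.e. $\ell_n' = \ell_e + \ell_n$. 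Equivalently, one can argue via the rational-bridge case already treated just above, by degenerating the tail-with-two-markings picture, but the cleanest route is the direct chart computation; alternatively one can deduce it from the fact that $\Mbar_{g,n+1} \cong C_{g,n}$ as log stacks, under which $\ell_n'$ and $\ell_e$ correspond to the two boundary divisors that meet along the stratum being contracted, and $\ell_n$ to the section $p_n$.

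The main obstacle I expect is purely bookkeeping: correctly identifying which generators of the (at most rank $3$) local characteristic monoid near the rational tail map to $\ell_e$, $\ell_n$, and $\ell_{n+1}$, and then verifying that the contraction morphism sends the slope-$1$ generator at $q_n'$ to $\delta + $ ($p_n$-generator) rather than to some other combination. There is no conceptual difficulty, but one has to be careful about the direction of the maps (pullback of characteristic monoids goes the "wrong" way) and about the fact that $\sigma$ is not strict near $\nu$. Once the local chart is pinned down, both equalities drop out immediately, and the independence of the choice of atomic chart follows as usual because $\ghost_S(S)$-valued lengths glue.
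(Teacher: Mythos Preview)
Your approach is correct but takes a more hands-on route than the paper. You propose to write down the explicit local chart $A[x,y]/(xy-t)$ near the attaching node and trace the contraction map on characteristic monoids coordinate by coordinate. The paper instead argues with a single test function: it takes an arbitrary global section $\beta$ of $\ghost_{C^{\mathsf{st}}}$ with outgoing slope $s \neq 0$ at $q_n'$, pulls it back along $\sigma$ to $\tilde\beta$ on $C$, and uses the tautological identity $(q_n')^*\beta = p_n^*\tilde\beta$ (valid because $q_n'$ is defined by composing $p_n$ with log maps). Both sides are then read off directly: $(q_n')^*\beta = \beta(v_0) + s\ell_n'$, while on the rational tail $\tilde\beta$ takes the value $\beta(v_0) + s\ell_e$, whence $p_n^*\tilde\beta = \beta(v_0) + s\ell_e + s\ell_n$. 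Equating gives $\ell_n' = \ell_e + \ell_n$. This neatly avoids the bookkeeping you flag as the main obstacle: no chart for $\sigma$ is ever written down, because the value of a PL function on the contracted component is determined purely by its slope through the node. Your chart computation would reach the same conclusion once the generators are pinned down, and your alternative suggestion via the identification $\Mbar_{g,n+1} \cong C_{g,n}$ is also viable; the paper's PL-function argument is simply the shortest of the three.
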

\begin{proof}
The equalities $\ell_i = \ell_i'$ for $1 \le i \le n-1$ are immediate, since the stabilisation map is an isomorphism on a neighbourhood of those sections. To see what happens at $p_n$, consider a global section $\beta$ of $\ghost_{C^{st}}$ with non-zero slope at the $n$'th leg; write $\beta(v)\in \ghost_S(S)$ for the value at the generic point of irreducible component $v$ of the closed fibre, and $s$ for the outgoing slope at $q_n'$. Pulling this back from $C^{st}$ to $C$ yields a global section $\tilde \beta$ of $\ghost_{C}$. Clearly $\tilde \beta$ takes value $\beta(v)$ at generic points of irreducible components which will not be contracted. If $v_0$ is the irreducible component to which the rational tail $v_r$ is attached, the value of $\tilde \beta$ at $v_r$ is given by $\beta(v_0) + s \ell_e$. The slope of $\tilde \beta$ at $q_n'$ is still $s$.

Since $q_n'$ is constructed by composing $p_n$ with other log maps, the value $(q_n')^* \beta \in \ghost_S(S)$ is equal to the value $p_n^*\tilde \beta\in \ghost_S(S)$. From the above description, we compute
\begin{equation}
(q_n')^* \beta = \beta(v_0) + s \ell_n' \;\;\ \text{and} \;\;\; p_n^*\tilde \beta  = \tilde \beta(v_r) + s\ell_n = \beta(v_0) = s\ell_e + s\ell_n. 
\end{equation}
This immediately implies that $\ell_n' = \ell_n + \ell_e$ as required. 
\end{proof}

\section{Piecewise polynomial functions and the DR cycle}\label{sec:PP_and_DR}
Piecewise polynomials functions give an efficient way to write classes in the log Chow ring of a log stack, as exploited by \cite{Molcho2021The-Hodge-bundl,Holmes2022Logarithmic-double,Holmes2021Logarithmic-int,Molcho2021A-case-study-of}. As a first application of our log pointed curves, we show that psi classes can now also be expressed in terms of piecewise polynomial functions, and use this to write the log double ramification cycle purely in terms of piecewise polynomials. 

\subsection{Piecewise polynomial functions}
We recall the basic definitions. 
\begin{definition}
Let $X$ be an algebraic stack with log structure, locally of finite type over $k$. Write $\ca A_X$ for the Artin fan of $X$, an algebraic stack with log structure, locally of finite type over $k$. The \emph{ring of strict piecewise polynomial functions on $X$} is the Chow cohomology ring of $\ca A_X$: 
\begin{equation}
\sPP^i(X) \coloneqq \CHOP^i(\ca A_X). 
\end{equation}
The \emph{ring of piecewise polynomial functions on $X$} is the colimit of Chow cohomology rings of subdivisions of $\ca A_X$: 
\begin{equation}
\PP^i(X) \coloneqq \colim_{\tilde{\ca A} \to \ca A_X}\CHOP^i(\tilde{\ca A}). 
\end{equation}
\end{definition}
In particular, $\sPP^1(X) = \ghost_X^\gp(X) \tensor_\Z \Q$. Pulling back along the natural map $X \to \ca A_X$ gives ring homomorphisms 
\begin{equation}
\sPP^i(X) \to \CHOP^i(X)
\end{equation}
and 
\begin{equation}
\label{eq:pptologch}
\PP^i(X) \to \LogCH^i(X). 
\end{equation}

\begin{remark}
The equivalence of this definition with others in the literature (for example, in 
\cite{Holmes2021Logarithmic-int} $\sPP^i(X)$ is defined to be $\Sym^i(\ghost_S)(S)$) is proven in \cite{Holmes2022Logarithmic-double} for the case where $X$ is smooth and log smooth over a point with trivial log structure, and the general case follows from \cite[Theorem B]{Molcho2021A-case-study-of}, which is itself based on forthcoming work of Bae and Park. 
\end{remark}

\subsection{Evaluating piecewise linear functions}

Strict piecewise linear functions on $\Mpt_{g,n}$ are generated by two special classes: 
\begin{enumerate}
\item
linear functions coming from boundary divisors
\item linear functions coming from the lengths $\ell_i$ of legs. 
\end{enumerate}
In the first case the corresponding element of $\CH^1(\Mpt_{g,n})$ is the corresponding boundary divisor, just as for $\Mbar_{g,n}$. In the second case we recover psi classes, as the next lemma shows. 

\begin{lemma}
\label{lem:psiclasscombinatorial}
Let $(C/S, p_1, \dots, p_n)$ be a log-pointed curve, and for $1 \le i \le n$ let $\ell_i \in \ghost_S(S)$ be the length of marking $i$. Let $s\colon S \to \Mpt_{g,n}$ be the tautological map. Then we have an equality of operational classes\footnote{We turn the $\G_m$-torsor $O_S(-\ell_i)^\times$ into a line bundle by gluing in the $\infty$ section.} on $S$
\begin{equation}
c_1(\ca O_S(-\ell_i)) = s^*\psi_i. 
\end{equation}
\end{lemma}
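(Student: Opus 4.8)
The plan is to compare two line bundles on $S$: the bundle $\ca O_S(-\ell_i)$ obtained from the piecewise-linear function $\ell_i\in\ghost_S(S)$ (with the $\infty$-section glued in), and the pullback $s^*\psi_i$ of the psi class, which is the first Chern class of $p_i^*\omega_{C/S}$ for the cotangent line at the $i$-th marking. Since both sides are operational classes in $\CHOP^1(S)$ and the statement is local on $S$, I would first reduce to the case where $S$ is atomic strictly henselian local, so that $\ghost_S$ is a constant sheaf with value a fs monoid $Q$, and $\ell_i$ is a genuine element of $Q$. It then suffices to exhibit a canonical isomorphism of line bundles $\ca O_S(-\ell_i)\cong p_i^*\omega_{C/S}$; taking $c_1$ of both sides gives the claim, using that $s^*\psi_i = c_1(p_i^*\omega_{C/S})$ by definition of the psi class and functoriality of Chern classes.

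The key step is the local computation near the marking. By \ref{def:ellpi} and the local description of the section $p_i$ from \cite[Table~1.8]{Kato2000Log-smooth-defo} (exactly the diagram appearing in the proof of \ref{lem:ellpizero}), in a strict étale neighbourhood of $P_i=\im p_i$ the curve looks like $\Spec A[t]$ with $p_i$ given by $t\mapsto 0$, and the log structure at the node-like point has the coordinate $t$ pulling back to $\ell_i\in\ghost_S$. I would compute $p_i^*\omega_{C/S}$ in these coordinates: $\omega_{C/S}$ is generated near $p_i$ by the logarithmic differential $\mathrm{d}\log t = \mathrm{d}t/t$ (since $p_i$ lies in the non-vertical locus, the relative log differentials are the relative log-cotangent sheaf, which near a log-marked point is free on $\mathrm{d}\log t$). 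Restricting along $p_i$, the trivialisation of $\omega_{C/S}$ over the locus $t\neq 0$ by $\mathrm{d}t/t$ differs from the trivialisation coming from $t$ itself precisely by the monomial $\ell_i$; this is the same clutching datum that defines $\ca O_S(-\ell_i)$ out of the $\G_m$-torsor $\ca O_S(-\ell_i)^\times$. Making this matching of transition data precise — i.e.\ checking that the isomorphism class of $p_i^*\omega_{C/S}$, viewed as the line bundle attached to the piecewise-linear function, is exactly $-\ell_i$ rather than $\ell_i$ or some twist — is the main bookkeeping obstacle, and the sign is pinned down by the convention that $\ca O_S(-\ell_i)$ has the $\infty$-section glued in at the vanishing locus of $\ell_i$.

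Finally I would globalise: the local isomorphisms $\ca O_S(-\ell_i)\cong p_i^*\omega_{C/S}$ constructed above are canonical (they only depend on the tautological trivialisations), hence glue over $S$, giving an isomorphism of line bundles and therefore the equality $c_1(\ca O_S(-\ell_i)) = s^*\psi_i$ in $\CHOP^1(S)$. The hard part is entirely the local differential-geometric identification in the previous paragraph; once that transition-function comparison is in place the rest is formal.
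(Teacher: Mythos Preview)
Your overall strategy — exhibit an isomorphism of line bundles $p_i^*\omega_{C/S}\cong\ca O_S(-\ell_i)$ and take $c_1$ — is exactly what the paper does. But your local computation contains a genuine error. The relative dualising sheaf $\omega_{C/S}$ near a smooth marked point is \emph{not} generated by $\mathrm d\log t=\mathrm dt/t$; it is generated by $\mathrm dt$. The sheaf generated by $\mathrm d\log t$ is the log cotangent sheaf $\Omega^{1,\log}_{C/S}\cong\omega_{C/S}(\sum_i p_i)$, which is canonically trivial along $p_i$ and hence pulls back to $\ca O_S$, not $\ca O_S(-\ell_i)$. Your transition-function sentence (``the trivialisation of $\omega_{C/S}$ over $t\neq 0$ by $\mathrm dt/t$ differs from the trivialisation coming from $t$ itself precisely by the monomial $\ell_i$'') compounds the issue: you are restricting along $p_i$, which sits at $t=0$, so a trivialisation defined over $t\neq 0$ is of no direct use, and the link between ``multiply by $t$'' on $C$ and ``$\ell_i$'' on $S$ is exactly the content you are trying to prove.

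The paper sidesteps all of this with one line of adjunction: $p_i^*\omega_{C/S}=p_i^*\ca O_C(-p_i)$. Then the computation becomes purely combinatorial in the log structure. On a neighbourhood of $P_i$ one has $\ghost_{C,p_i}=\pi^{-1}\ghost_S\oplus\bb N$, and the PL function $(0,1)$ satisfies $\ca O_C((0,1))=\ca O_C(-p_i)$ (it is the class of the local equation $t$). Since the log section $p_i$ sends $(0,1)\mapsto\ell_i$ by the very definition of $\ell(p_i)$, functoriality of $\ca O(-)$ under log pullback gives
\[
p_i^*\ca O_C(-p_i)=p_i^*\ca O_C(-(0,1))=\ca O_S(-p_i^*(0,1))=\ca O_S(-\ell_i),
\]
and you are done without ever touching differentials or transition functions. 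If you want to salvage your approach, replace the $\mathrm d\log t$ step by this adjunction identity, after which the rest of your outline (reduction to atomic local, take $c_1$) is fine.
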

\begin{proof}
Recall that $s^*\psi_i  = c_1(p_i^*\omega_{C/S})$, and by adjunction we have the equality $p_i^*\omega_{C/S} = p_i^*\ca O_C(-p_i)$. 
 It therefore suffices to show that
\begin{equation}
p_i^*\ca O_C(-p_i) = \ca O_S(-\ell_i). 
\end{equation}
We have a direct sum decomposition $\ghost_{C, p_i} = \pi^{-1} \ghost_{S} \oplus \bb N$, and $p_i$ induces a map 
\begin{equation}
p_i^*\colon \pi^{-1} \ghost_{S} \oplus \bb N \to \ghost_S
\end{equation}
which sends $(0, 1)$ to $\ell_i$. Now on a small neighbourhood $U_i$ of the image of $p_i$ we can view $(0,1)$ as a PL function on $C$, and we see that 
\begin{equation}
\ca O_C((0,1)) = \ca O_C(-p_i). 
\end{equation}
Thus
\begin{equation}
p_i^*\ca O_C(-p_i) = p_i^*\ca O_C(-(0,1)) = \ca O_S(-p_i^*(0,1)) = \ca O_S(-\ell_i)
\end{equation}
as required. 
\end{proof}

\begin{remark}
Given that the log structure on $\Mpt_{g,n}$ has generic rank $n$, it seems reasonable to ask whether $\Mpt_{g,n}$ admits a log smooth map to a point with log structure $\bb N^n$. The above lemma shows that this is not in general the case. Indeed, if such a map existed then there would exist an invertible $n \times n$ integer matrix $M$ such that $M[\psi_1, \dots, \psi_n]$ is the zero vector, and this is not in general the case. 

However, a substitute can be built. Denote by $\ca Z$ the origin in the quotient $[\bb A^1/\bb G_m]$, where $\bb A^1$ is equipped with its toric log structure; so $\ca Z$ is a $B\bb G_m$ with rank 1 log structure. To give a map from a log stack $X$ to $\ca Z$ is to give a section $\bar\alpha \in \ghost_X(X)$ which is nowhere zero. The lengths of the $n$ legs define a map 
\begin{equation}
\Mpt_{g,n} \to \ca Z^n, 
\end{equation}
which is easily seen to be log smooth. We might think of this as strictly embedding $\Mptst$ as the origin of the vector bundle over $\Mbar_{g,n}$ given by $\psi_1 \oplus \cdots \psi_n$. 
\end{remark}

\subsection{The double ramification cycle}

Let $\ul a \in \Z^n$ be a vector of integers summing to $0$. Then there is a locus inside ${\ca M}_{g,n}$ where the line bundle $\Ocal(a_1 p_1 + \cdots + a_n p_n)$ is trivial, called the double ramification locus. This locus has a natural extension to $\Mbar_{g,n}$, and admits a virtual fundamental class whose pushforward to $\Mbar_{g,n}$ we denote $\DR_g(\ul a) \in \CH^g(\Mbar_{g,n})$. In \cite[\S 9]{Holmes2017Multiplicativit} a natural lift $\LogDR_g(\ul a) \in \LogCH^g(\Mbar_{g,n})$ is constructed. 

\begin{definition}\label{def:logDR_via_pullback}
Let $\ul a \in \Z^n$ be a vector of integers summing to $0$. Let $\pi$ denote the forgetful map $\Mptst_{g,n} \to \Mbar_{g,n}$. Then we define the \emph{log pointed double ramification cycle} $\LLogDR_{g}(\ul a) \coloneqq \pi^*\LogDR_g(\ul a) \in \LogCH^g(\Mptst_{g,n})$.
\end{definition}
In \ref{subsec:virtualclass} we will give a more direct construction of $\LLogDR_g(\ul a)$.

\begin{proposition}
\label{prop:LogDRinLogS} Suppose the ground field $k$ has characteristic zero. Let $a \in \Z^n$ be a vector of integers summing to $0$. Then there exists a piecewise polynomial function $P$ on $\Mptst_{g,n}$ of degree $g$ whose image in $\LogCH^g(\Mptst_{g,n})$ is equal to $\LLogDR_g(\ul a)$. 
\end{proposition}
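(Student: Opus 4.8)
The plan is to reduce the statement to the known piecewise-polynomial description of the log double ramification cycle on $\Mbar_{g,n}$ and then transport it along the forgetful map $\pi\colon \Mptst_{g,n} \to \Mbar_{g,n}$. First I would recall from \cite{Holmes2022Logarithmic-double} (or \cite{Molcho2021A-case-study-of}, \cite{Holmes2021Logarithmic-int}) that in characteristic zero there is a piecewise polynomial function $Q$ on $\Mbar_{g,n}$ of degree $g$ whose image under $\PP^g(\Mbar_{g,n}) \to \LogCH^g(\Mbar_{g,n})$ (the map \cref{eq:pptologch}) equals $\LogDR_g(\ul a)$. The desired $P$ on $\Mptst_{g,n}$ will then be obtained by pulling $Q$ back. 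Concretely, since the Artin fan construction and the colimit defining $\PP$ are functorial, the forgetful map induces a ring homomorphism $\pi^*\colon \PP^g(\Mbar_{g,n}) \to \PP^g(\Mptst_{g,n})$, and I set $P \coloneqq \pi^* Q$, which is again homogeneous of degree $g$.

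The second step is to check that the image of $P$ in $\LogCH^g(\Mptst_{g,n})$ is $\LLogDR_g(\ul a) = \pi^*\LogDR_g(\ul a)$. This is a compatibility between the two kinds of pullback — pullback of piecewise polynomials along $\pi$ at the level of Artin fans, and pullback on log Chow defined in \ref{subsubsec:logpullback} — with the map \cref{eq:pptologch} from $\PP$ to $\LogCH$. I would phrase this as commutativity of a square: the Artin fan map $\Mptst_{g,n} \to \ca A_{\Mptst_{g,n}}$ sits over $\Mbar_{g,n} \to \ca A_{\Mbar_{g,n}}$, and $\pi^*$ on $\PP$ is computed by choosing a compatible subdivision of the target Artin fan and pulling back; taking the fibre product of a log modification of $\Mbar_{g,n}$ realising $Q$ against $\Mptst_{g,n}$ computes $\pi^*$ on $\LogCH$, and the naturality of Chow-cohomology pullback along the square of maps to the Artin fans gives the equality. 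One subtlety to address is that $\Mptst_{g,n}$ is not log smooth (it is only idealised log smooth, with the log blowups no longer birational, as explained in the introduction), so I must work with $\LogCH$ in the sense of \ref{def:logchow} using $\CHOP$ on (possibly non-birational) log modifications; but the pullback $\pi^*$ on $\LogCH$ is defined in exactly this generality in \ref{subsubsec:logpullback}, and pullback of operational Chow classes along arbitrary morphisms is functorial, so no log smoothness of the source is needed.

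I expect the main obstacle to be this second step — making the comparison of the two pullbacks precise — rather than the mere existence of a piecewise polynomial representative on $\Mbar_{g,n}$, which is by now standard. The key point to get right is that the piecewise polynomial $Q$ representing $\LogDR_g(\ul a)$ can be taken to live on $\ca A_{\Mbar_{g,n}}$ (or a subdivision thereof) and that its pullback to $\ca A_{\Mptst_{g,n}}$ represents the log Chow pullback; since $\ghost_{\Mptst_{g,n}} = \ghost_{\Mbar_{g,n}} \oplus \N^n$ by \ref{sec:basic}, the Artin fan of $\Mptst_{g,n}$ maps to that of $\Mbar_{g,n}$ compatibly, and everything is determined combinatorially. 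A clean way to organise this is to note that $\pi$ is strict on an open dense substack and that both $P$'s image in $\LogCH^g$ and $\LLogDR_g(\ul a)$ are pulled back from $\Mbar_{g,n}$, so it suffices to check the identity $\PP^g(\Mbar_{g,n}) \to \LogCH^g(\Mbar_{g,n}) \xrightarrow{\pi^*} \LogCH^g(\Mptst_{g,n})$ agrees with $\PP^g(\Mbar_{g,n}) \xrightarrow{\pi^*} \PP^g(\Mptst_{g,n}) \to \LogCH^g(\Mptst_{g,n})$, which is exactly the naturality of \cref{eq:pptologch} under $\pi$. Once that diagram commutes, applying it to $Q$ gives $P$ with the required property, completing the proof.
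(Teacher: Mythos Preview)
Your argument has a genuine gap at the very first step. You assert that on $\Mbar_{g,n}$ there already exists a piecewise polynomial $Q$ with image $\LogDR_g(\ul a)$ in $\LogCH^g(\Mbar_{g,n})$, citing \cite{Holmes2022Logarithmic-double}, \cite{Molcho2021A-case-study-of}, \cite{Holmes2021Logarithmic-int}. But those references do not prove this, and in fact it is false in general. The formula of \cite[Theorem~B, Eq.~(19)]{Holmes2022Logarithmic-double} reads
\[
\LogDR_g(\ul a) = \left[\exp\!\left(\tfrac12\Bigl(\sum_{i=1}^n a_i^2 \psi_i - \Phi(\Lcal)\Bigr)\right)\cdot \Phi(\Pcal)\right]_g,
\]
with $\Lcal,\Pcal$ piecewise polynomial on $\Mbar_{g,n}$. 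The $\psi_i$ are \emph{not} in the image of $\PP(\Mbar_{g,n})\to \LogCH(\Mbar_{g,n})$: any class coming from the Artin fan of $\Mbar_{g,n}$ restricts to zero on the smooth locus $\ca M_{g,n}$, while $\psi_i$ does not. So there is no such $Q$ to pull back, and your reduction collapses.

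The point of the proposition is precisely that the extra log structure on $\Mptst_{g,n}$ turns the $\psi$-classes into piecewise linear functions. This is \Cref{lem:psiclasscombinatorial}: on $\Mptst_{g,n}$ one has $\psi_i = -\Phi'(\ell_i)$, where $\ell_i$ is the length of the $i$-th leg. The paper's proof pulls the displayed formula back along $\pi$, then invokes \Cref{lem:psiclasscombinatorial} to replace each $\psi_i$ by the piecewise linear function $-\ell_i$, obtaining the explicit piecewise polynomial
\[
P_{g,n}(\ul a)=\left[\exp\!\left(\tfrac12\Bigl(-\sum_{i=1}^n a_i^2\ell_i - \Lcal\Bigr)\right)\cdot \Pcal\right]_g.
\]
Your naturality square for $\PP\to\LogCH$ under $\pi^*$ is indeed needed (and used) for the piecewise-polynomial part of the formula, but it is \Cref{lem:psiclasscombinatorial} that does the real work; without it the argument does not go through.
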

\begin{remark}
An equivalent formulation of this proposition is so that that this log double ramification cycle is a pullback of a cycle on the Artin fan of $\Mptst_{g,n}$; in other words, it is a purely tropical class. 
\end{remark}
\begin{proof}
We let $\Phi$ denote the map $\PP^*(\Mbar_{g,n}) \to \LogCH^*(\Mbar_{g,n})$ and $\Phi'$ the map $\PP^*(\Mptst_{g,n}) \to \LogCH^*(\Mptst_{g,n})$. By \cite[Theorem~B]{Holmes2022Logarithmic-double} and \cite[Eq.~(19)]{Holmes2022Logarithmic-double} we have the formula
\[
\LogDR_g(\ul a) = \left[\exp\left(\frac12 \left(\sum_{i = 1}^n a_i^2 \psi_i - \Phi(\Lcal)\right)\right) \cdot \Phi(\Pcal)\right]_g
\]
for certain piecewise polynomial functions $\Lcal \in \PP^1(\Mbar_{g,n})$ and $\Pcal \in \PP^*({\Mbar_{g,n}})$ dependent on $a$, where $[\cdot]_g$ denotes the codimension $g$ part.\footnote{The piecewise polynomials $\Lcal$ and $\Pcal$ depend on a choice of a stability condition $\theta$ (see \cite[Section~1.6]{Holmes2022Logarithmic-double}). The class $\LogDR_a$ is independent of this choice.} By the commutative diagram
\[
 \begin{tikzcd}
  \PP^i(\Mbar_{g,n})  \arrow[r,hookrightarrow]\arrow[d,"\Phi"] & \PP^i(\Mptst_{g,n})\arrow[d,"\Phi'"] \\
  \LogCH^i(\Mbar_{g,n}) \arrow[r,hookrightarrow] & \LogCH^i(\Mptst_{g,n})
\end{tikzcd}
\]
and \Cref{lem:psiclasscombinatorial} we see that the piecewise polynomial
\begin{equation}
P_{g,n}(\ul a) = \left[\exp\left(\frac12 \left(-\sum_{i = 1}^n a_i^2 \ell_i - \Lcal\right)\right) \cdot \Pcal \right]_g
\end{equation}
satisfies
\[
\pi^* \LogDR_g(\ul a) = \Phi'(P_{g,n}(\ul a)).\qedhere
\]
\end{proof}

\section{Gluing log pointed curves}
\label{sec:sewing}

In this section we will construct the gluing maps $\Mpt_{g_1,n_1+1} \times \Mpt_{g_2,n_2+1} \to \Mpt_{g_1+g_2,n_1+n_2}$ and $\Mpt_{g,n+2} \to \Mpt_{g+1,n}$. These map play a major role in the study of algebraic curves, but do not exist for marked log curves. For log pointed curves we do have gluing maps, whose underlying maps are exactly the usual non-logarithmic gluing maps. To define the gluing maps, we first define the \emph{piercing} of a log pointed curve at a section. This is similar to, but slightly different from, the notion of the puncturing of a log pointed curve defined in \ref{subsec:punctured}

\subsection{Pierced log curves}
\label{subsec:pierced}

\begin{definition}
\label{def:piercedcurve}
Let $(\pi\colon C \to S, p_1, \dots, p_n)$ be a log pointed curve, and let $1 \le i \le n$. We define the \emph{piercing} of $C$ along $p_i$ to be the log scheme $(\pie{C}/S, p_1, \dots, p_n)$, together with map $\pie C \to C$ over $S$, defined as follows: 
\begin{enumerate}
\item
Away from $p_i$, the map $\pie C \to C$ is an isomorphism; 
\item $\pie C \to C$ is an isomorphism on underlying schemes;
\item Let $\M$ be the verticalisation of the log structure on $C$ along $p_i$, and let $\sf P$ be the log structure on $\ul C$ with respect to the divisor $p_i$, so that $\M_C = \M \oplus_{\ca O^\times} \sf P$, with natural map 
\begin{equation}
p_i^*\colon \M_{C, p_i} = \M_{p_i} \oplus_{\ca O^\times} \sf P  \to \M_S. 
\end{equation}
We define $\M_{\pie C, p_i}$ to be the largest submonoid of $\M_{p_i} \oplus_{\ca O^\times} {\sf P}^\gp$ such that the natural map 
\begin{equation}
p_i^*\colon \M_{p_i} \oplus_{\ca O^\times} \sf{P}^\gp  \to \M_S^\gp 
\end{equation}
restricts to a sharp map of monoids $\M_{\pie C, p_i} \to \M_S$; in more concrete terms, 
\begin{equation}
\M_{\pie C, p_i} = \{x \in \M_{p_i} \oplus_{\ca O^\times} {\sf P}^\gp | x\in \M_{C,p_i} \vee (p_i^*x \in \M_S \wedge \alpha(p_i^* x) = 0) \}. 
\end{equation}
where $\alpha: \M_S \to \Ocal_S$ is the structure map of $S$.
\end{enumerate}

We claim that the structure map $\M_{p_i} \oplus_{\ca O^\times} {\sf P} \to \ca O_{C, p_i}$ extends uniquely to a map $\M_{\pie C, p_i} \to \ca O_{C, p_i}$. There is an obvious extension sending an element of $\M_{p_i} \oplus_{\ca O^\times} \sf P^\gp$ to an element of the punctured local ring of $C$ along $p_i$ (i.e. first invert the complement of the ideal associated to $p_i$, then invert a generator of the ideal); it remains to check that any element in $\M_{\pie C, p_i}$ has non-negative valuation along $p_i$, but this follows from the fact that any element of $\M_{\pie C, p_i}$ either lies in $\M_{c,p_i}$, or pulls back along $p_i$ to an element mapping to $0$ under the structure map.
\end{definition}

\begin{lemma}
Let $(\pi\colon C \to S, p_1, \dots, p_n)$ be as above. There is a unique lift $p_i\colon S \to \pie C$ of $p_i \colon S \to C$; on schemes this is the same map, on log structures the map $p_i^*$ is defined above.
\end{lemma}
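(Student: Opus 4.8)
The plan is to see that the lift is forced, stalk by stalk, by the very way $\pie C$ was constructed, so that the statement is essentially a bookkeeping exercise once the right reduction is made. Since $\pie C \to C$ is an isomorphism on underlying schemes, any lift of $p_i$ has underlying map $\ul{p_i}$; in particular the underlying part of the lift exists and is unique, and it remains only to analyse log structures. Giving a lift $S \to \pie C$ of $p_i$ amounts to giving a morphism of log structures $\ul{p_i}^*\M_{\pie C} \to \M_S$ whose composite with the map $\ul{p_i}^*\M_C \to \ul{p_i}^*\M_{\pie C}$ induced by $\pie C \to C$ is the map $p_i^*$ of the original section. As $\pie C \to C$ is an isomorphism away from $P_i = \im(p_i)$ and $\ul{p_i}$ factors through $P_i$, only the stalks along $P_i$ are relevant, where the stalk of $\M_{\pie C}$ is the monoid $\M_{\pie C, p_i}$ of \ref{def:piercedcurve}. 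So the task reduces to producing, and showing uniqueness of, a homomorphism $q\colon \M_{\pie C, p_i} \to \M_S$ which extends $p_i^*\colon \M_{C, p_i} \to \M_S$ and is compatible with the structure maps to $\ca O_S$.

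I would first dispose of uniqueness, which also forces the formula in the statement. By construction $\M_{\pie C, p_i}$ lies inside $\M_{p_i} \oplus_{\ca O^\times} {\sf P}^\gp \subseteq \M_{C, p_i}^\gp$, and $\M_S$ is integral (being fs), so any homomorphism $q$ as above, regarded as landing in $\M_S^\gp$, is the restriction of the unique group homomorphism $\M_{C, p_i}^\gp \to \M_S^\gp$ extending $p_i^*$, namely the groupification of $p_i^*$. Hence $q$ must be that groupification restricted to $\M_{\pie C, p_i}$ — which is exactly the map $p_i^*$ described in the statement — and in particular a lift, if it exists, is unique.

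It then remains to verify existence, i.e.\ that this candidate $q$ genuinely is a morphism of log structures. That $q$ takes values in $\M_S$, and not merely in $\M_S^\gp$, is precisely the defining property of $\M_{\pie C, p_i}$ as the largest submonoid of $\M_{p_i} \oplus_{\ca O^\times} {\sf P}^\gp$ on which $p_i^*$ restricts to a sharp homomorphism to $\M_S$. Compatibility of $q$ with the $\ca O^\times$-actions, and with the structure maps on $\M_{C, p_i}$, is immediate from the corresponding facts for the original section $p_i$, using that the structure map $\alpha_{\pie C}$ extends $\alpha_C$. On the remaining elements $x \in \M_{\pie C, p_i}$ — those not in $\M_{C, p_i}$, which by the explicit description in \ref{def:piercedcurve} satisfy $p_i^* x \in \M_S$ and $\alpha(p_i^* x) = 0$ — one checks that $\alpha_{\pie C}(x) = 0 \in \ca O_{C,p_i}$: near $P_i$ the curve is classically smooth and the verticalised log structure $\M$ is pulled back from $S$, so \ref{lem:ellpizero} forces the regular (verticalised) part of $x$ to map to $0$, and this kills $\alpha_{\pie C}(x)$; hence $\ul{p_i}^\sharp(\alpha_{\pie C}(x)) = 0 = \alpha(q(x))$, as needed. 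This last point — that the extended structure map annihilates exactly the ``new'' elements of $\M_{\pie C, p_i}$ — is the only step that is not pure unwinding of definitions, and it is where \ref{lem:ellpizero} is essential; everything else is formal.
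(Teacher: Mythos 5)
Your argument is correct. The paper in fact states this lemma without any proof, treating it as immediate from \ref{def:piercedcurve} (where the one non-formal point, that the structure map of $\M_{\pie C}$ extends over the new elements, is already verified); your write-up is precisely the intended unwinding — uniqueness from integrality of $\M_S$ together with the fact that $\M_{\pie C, p_i}$ sits inside $\M_{C,p_i}^\gp$ and meets $\M_{C,p_i}$ up to adding elements of ${\sf P}$, and existence from the defining maximality of $\M_{\pie C, p_i}$. One small imprecision: in your final step the vanishing $\alpha_{\pie C}(x)=0$ for a new element $x$ follows directly from the defining condition $\alpha(p_i^*x)=0$ combined with Kato's local structure of the log curve at the marking (the verticalised part of $\M_{C,p_i}$ is a unit times a pullback from $S$, and that pullback is forced to map to $0$), so \ref{lem:ellpizero} is not actually needed here — it is the analogous input for the \emph{punctured} curve of \ref{def:puncturedcurve} rather than for the pierced one.
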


The \emph{slope} of a PL function $m\in \ghost_{\pie C}^\gp(\pie C)$ along $p_i$ is the image of $m$ in ${\sf P}^\gp \cong \bb Z$; observe that even if $m$ lies in $\ghost_{\pie C}(\pie C)$, it can still have negative slope. 

\begin{remark}
The piercing $\pie C$ of a log pointed curve $C$ at a marking $p$ is \emph{not} an fs log scheme. For example, take $C = \A^1$ with log structure given by strict inclusion of $C$ as the $x$-axis in $\A^2$, the base $\Spec k$ with log structure $\N$, and as log section $p$ the inclusion of the origin with the map on characteristic monoids $\N^2 \to \N$ given by addition. Then $\ghost_{C^\punc,p} \subset \N \times \Z$ is fs, and generated by $(0,1)$ and $(1,-1)$. But $\ghost_{\pie{C},p} \subset \N \times \Z$ consists of $(0,0)$ and the pairs $(x,y)$ with $x + y > 0$, which is not finitely generated.
\end{remark}

Note that $\pi \colon C\to S$ is saturated and admits charts by integral monoids, but not charts by finitely-generated monoids (we say $\pie C\to S$ is quasi-fine and saturated). 

Since the markings $p_1, \dots, p_n$ are disjoint, we can pierce independently at any subset of $\{p_1, \dots, p_n\}$. 

\subsection{Gluing pierced log curves}\label{sec:local_pushout}

In this subsection we temporarily drop the assumption that log curves have connected fibers, in order to treat uniformly the two gluing maps above. 

Let $S$ be a log scheme and let $(C/S,p_1, \dots, p_n)$ be a log pointed curve, with $n \ge 2$. We will explicitly construct a log curve by gluing $p_1$ and $p_2$ together. For simplicity of notation, we will assume $n=2$. 

Let $(\pie C, p_1, p_2)$ be the piercing at $p_1$ and $p_2$ from \ref{def:piercedcurve}. Define $\tilde C$ to be the pushout 
\begin{equation}
 \begin{tikzcd}
  \tilde C & S \ar[l, "p"]\\
  \pie C \ar[u, "g"] & S \sqcup S \ar[l, "p_1 \sqcup p_2"] \ar[u, "i"]
\end{tikzcd}
\end{equation}
in the category of quasi-fine $S$-algebraic spaces. The underlying scheme of $\tilde C$ is the pushout of the underlying schematic diagram, and the log structure is the pullback of the corresponding diagram of log structures:
\begin{equation}
\M_{\tilde C} = g_*\M_{\pie C} \times_{(p \circ i)_* \M_{S \sqcup S}} p_*\M_S. 
\end{equation}

\begin{remark}
The pushout $\tilde C$ is \emph{not} a log curve.
The stalk of the groupification of the ghost sheaf at the new singular point $p$ is given by 
\begin{equation}
\ghost_{\tilde C,p}^\gp = \ghost_S^\gp \oplus \Z^2.
\end{equation}
\end{remark}

A section of $\M_{\tilde C, p}$ has two slopes at $p$, given by the slopes of the pullbacks along $g \circ p_1$ and $g \circ p_2$. We define $C^\gl$ to be the log scheme whose underlying scheme is that of $\tilde C$, and whose log structure is the subsheaf of $\M_{\tilde C}$ consisting of elements whose slopes at $p_1$ and $p_2$ sum to $0$. Note there is a natural map $\tilde C \to C^\gl$. 

\begin{definition}
\label{def:gluedlogcurve}
We define $C^{\gl}$, together with map of schemes $p: \ul{S} \to C^{\gl}$ mapping to the new singular point and the gluing map $\pie{C} \to C^{\gl}$, to be the \emph{gluing} of $C$ at $p_1$ and $p_2$.
\end{definition}

\begin{remark}
We shortly present an alternative definition, found and explained to us by Dan Abramovich. We let $S^{\circ \circ}$ be $\ul{S}$ with log structure $M_{S^{\circ \circ}} = \{(m,n) \in  M_{S} \oplus \Z : \bar{m} = 0 \Rightarrow n = 0\}$. This has a map to $S$ and a section $q: S \to S^{\circ \circ}$. We define the sections $p_1^\circ, p_2^\circ: S^{\circ \circ} \to \pie{C}$ to agree with $p_1,p_2$ on the level of algebraic spaces, and be given by
\[\M_{C, p_1} \to \M_{S^{\circ\circ}}\colon (a, n) \mapsto (a + n \ell_1, n)\]
and
\[\M_{C, p_2} \to \M_{S^{\circ\circ}}\colon (b, m) \mapsto (b + m \ell_2, -m)\]
on monoids.
Then one can check we have the pushout diagram
\[
 \begin{tikzcd}
  C^{\gl} & S^{\circ\circ} \ar[l]\\
  \pie C \ar[u] & S^{\circ\circ} \sqcup S^{\circ\circ} \ar[l, "p_1^\circ \sqcup p_2^\circ"] \ar[u]
\end{tikzcd}
\]
and one can define $C^\gl$ as the pushout $\pie{C} \sqcup_{S^{\circ\circ} \sqcup S^{\circ\circ}} S^{\circ\circ}$.
\end{remark}

\begin{lemma}
\label{lem:gluedlogcurve}
The prelog scheme $C^{\gl}/S$ is a log curve, with new singular point $p$ of length $\ell_1 + \ell_2$. The natural gluing map $\pie{C} \to C^{\gl}$ together with the puncturing map $\pie{C} \to C$ induces an isomorphism $C \setminus \{p_1,p_2\} \to C^{\gl} \setminus \{p\}$.
\end{lemma}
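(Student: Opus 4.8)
The plan is to reduce everything to the local picture around the new node $p$. Both assertions are étale-local on $C$ and strict-étale-local on $S$, and away from $p$ the statement is formal: by construction $\pie C \to C$ is an isomorphism outside $\{p_1,p_2\}$, the pushout map $\pie C \to \tilde C$ restricts to an isomorphism $\pie C\setminus\{p_1,p_2\} \iso \tilde C\setminus\{p\}$, and passing from $\tilde C$ to $C^{\gl}$ only alters the stalk of the log structure at $p$; composing these identifications yields the asserted isomorphism $C\setminus\{p_1,p_2\} \iso C^{\gl}\setminus\{p\}$ of log schemes. This also shows that $C^{\gl}/S$ is a genuine log scheme (the subsheaf $\M_{C^{\gl}}\subset\M_{\tilde C}$ contains $\ca O^\times$, as units have zero slope at $p_1$ and $p_2$), that its underlying scheme is proper over $\ul S$ (being the scheme-theoretic gluing of two disjoint sections of the proper curve $\ul C\to\ul S$), and that $C^{\gl}\to S$ is log smooth, integral and saturated with reduced, pure-dimension-one geometric fibres over the open complement of $p$. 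So it remains to analyse the stalk at $p$.

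For that I would first invoke the local structure theorem for log curves near a marked point already used in the proof of \ref{lem:ellpizero}: after an étale localisation on $C$ and a strict étale base change on $S$, near $p_i$ the curve is $\Spec \ca O_S[t_i]$ with $\M_C$ the sum of $\pi^*\M_S$ and the divisorial log structure of $\{t_i=0\}$, with chart $\M_S\oplus\N s_i$, $s_i\mapsto t_i$, and $p_i^*(m,k)=m+k\ell_i$ where (by \ref{lem:ellpizero}) $\ell_i\in\M_S$ satisfies $\alpha_S(\ell_i)=0$; in particular $\bar\ell_i\ne 0$ in $\ghost_S$, since a unit cannot map to $0$. The underlying scheme of $\tilde C=C^{\gl}$ near $p$ is then $\Spec \ca O_S[x,y]/(xy)$, the scheme-theoretic gluing of the two sections, so the geometric fibres of $\ul{C^{\gl}}/\ul S$ are those of $\ul C/\ul S$ with two smooth points identified — reduced and pure-dimension-one everywhere. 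Unwinding \ref{def:piercedcurve} gives $\M_{\pie C,p_i} = \{\pi^*m + k s_i : m + k\ell_i\in\M_S,\ (k\ge 0\text{ or }\alpha_S(m+k\ell_i)=0)\}$, while $\M_{\tilde C,p} = \M_{\pie C,p_1}\times_{\M_S}\M_{\pie C,p_2}$ (the fibre product over $p_1^*,p_2^*$) and $\M_{C^{\gl},p}$ is the submonoid on which the slopes at $p_1$ and $p_2$ sum to zero.

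The heart of the argument — and the step I expect to be the main obstacle — is the monoid computation at $p$: showing that after imposing the slope condition the a priori badly non-coherent monoid (both $\M_{\pie C,p_i}$ and the fibre product fail to be finitely generated) becomes the standard nodal chart. I would parametrise an element of $\M_{C^{\gl},p}$ by its common pullback $c\in\M_S$ and its slope $k$ at $p_1$ (so $-k$ is the slope at $p_2$); membership then amounts to $c-k\ell_1\in\M_S$ and $c+k\ell_2\in\M_S$ together with the disjunction ``$\alpha_S(c)=0$ or $k=0$'' coming from the two piercing conditions, and the point is that $\alpha_S(\ell_i)=0$ makes $\alpha_S(c)=0$ \emph{automatic} whenever $k\ne 0$ (for $k>0$, $c=(c-k\ell_1)+k\ell_1$ has $\alpha_S$-image divisible by $\alpha_S(\ell_1)=0$, and symmetrically for $k<0$). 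It follows that $\M_{C^{\gl},p}$ is generated over $\pi^*\M_S$ by $f_1=(\ell_1,1)$ and $f_2=(\ell_2,-1)$ with the single relation $f_1+f_2=\pi^*(\ell_1+\ell_2)$, i.e. $\ghost_{C^{\gl},p}\cong\ghost_S\oplus_{\N}\N^2$ with $\N\to\N^2$ the diagonal and $\N\to\ghost_S$ sending $1$ to $\bar\ell_1+\bar\ell_2$. Since $\bar\ell_1+\bar\ell_2\ne 0$ (sharpness of $\ghost_S$), this is exactly the fs characteristic monoid of an ordinary node of length $\bar\ell_1+\bar\ell_2$, and tracking the structure map — it sends $f_1\mapsto x$, $f_2\mapsto y$, with $\alpha_S(\ell_1+\ell_2)=0=xy$ — exhibits $C^{\gl}/S$ near $p$ as the standard log smooth, integral, saturated nodal family. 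Combining this with the first paragraph, $C^{\gl}/S$ is a log curve whose new singular point $p$ has length $\ell_1+\ell_2$, which completes the proof.
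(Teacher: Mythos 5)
Your proof is correct and follows essentially the same route as the paper: the statement is reduced to the stalk at the new point $p$, where the characteristic monoid of $C^{\gl}$ is computed as the fibre product of the two pierced stalks subject to the slope condition and identified with the standard nodal monoid of length $\ell_1+\ell_2$ (your observation that $\alpha_S(\ell_i)=0$ forces $\alpha_S(c)=0$ whenever $k\neq 0$ is exactly what makes the paper's fibre-product description collapse to $\{(a_1,a_2):\ \ell_1+\ell_2 \text{ divides } a_1-a_2\}$). The paper's own proof is just a terser version of this same computation, so no further comment is needed.
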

\begin{proof}
Away from $p$ this is obvious. At $p$, it follows from the computation of the fiber product
\begin{align*}
  \ghost_{C^{\gl},p} &= \left\{(a_1,s_1,a_2,s_2) \in \ghost_S \times \Z \times \ghost_S \times \Z \ \middle\vert \begin{array}{l}
    \alpha(a_1 + s_1 \ell_1) = 0 \text{ or } s_1 \geq 0  \\
    \alpha(a_2 + s_2 \ell_2) = 0 \text{ or } s_2 \geq 0  \\
    a_1 + s_1 \ell_1 = a_2 + s_2 \ell_2 \\
    s_1 + s_2 = 0
  \end{array}\right\} \\
  &= \left\{(a_1,a_2) \in \ghost_S \times \ghost_S\ \middle\vert \begin{array}{l}
    \ell_1 + \ell_2 \text{ divides } a_1 - a_2
  \end{array}\right\}. \qedhere
\end{align*}
\end{proof}

To obtain gluing maps between moduli spaces of log pointed curves, we will also need the following easy lemma.
\begin{lemma}
The construction of $C \to C^{\gl}$ commutes with base-change over $S$.
\end{lemma}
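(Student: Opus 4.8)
The plan is to unwind the definition of $C^\gl$ (\ref{def:gluedlogcurve}) into the three steps from which it is assembled and to check that each is stable under pulling back along an arbitrary morphism $f\colon S'\to S$; write $C' = C\times_S S'$, let $p_1',p_2'$ be the induced log sections, and write $\pie{C'}$, $\tilde{C'}$, $(C')^\gl$ for the objects built from $(C'/S',p_1',p_2')$. Since the formation of $C^\gl$ is strict-\'etale-local on $\ul S$ and the map $C\to C^\gl$ is an isomorphism away from the two glued sections (\ref{lem:gluedlogcurve}), and since the new node is the image of the section $p\colon\ul S\to C^\gl$, which is preserved by base change, the whole statement reduces to comparing the stalks of $\M_{C^\gl}$ and $\M_{(C')^\gl}$ at the new node — that is, to the explicit monoid computation in the proof of \ref{lem:gluedlogcurve}.

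First I would treat the piercing. Away from $p_1,p_2$ it is an isomorphism, and near $p_i$ the recipe of \ref{def:piercedcurve} produces $\M_{\pie C,p_i}$ out of the verticalisation $\M$ of $\M_C$, the divisorial log structure $\sf P$ of the Cartier divisor $p_i\subset\ul C$, and the structure map $\alpha\colon\M_S\to\Ocal_S$. Verticalisations and divisorial log structures of relative Cartier divisors commute with base change, and $\alpha$ pulls back to $\alpha'$ through the natural $f^\flat$, so I would argue that the explicit formula for $\M_{\pie C,p_i}$ in \ref{def:piercedcurve} transports term by term. The one point genuinely needing care is the clause ``$p_i^* x\in\M_S$ and $\alpha(p_i^* x) = 0$'': here \ref{lem:ellpizero} is the crucial input, since it makes the vanishing $\alpha(\ell_i) = 0$ hold before base change and hence a fortiori after, so that the extra generators of the piercing are created and destroyed compatibly and no new ones appear over $S'$. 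Consequently $\pie{C'}\cong\pie C\times_S S'$ compatibly with the maps to $C$.

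Next the pushout $\tilde C = \pie C\sqcup_{S\sqcup S} S$ and the passage to $C^\gl$. On underlying schemes $\ul{\tilde C}$ is the transversal clutching of two branches of $\ul C$ along a section, which is classically compatible with base change on $\ul S$; on log structures $\M_{\tilde C} = g_*\M_{\pie C}\times_{(p\circ i)_*\M_{S\sqcup S}}p_*\M_S$, and since the three terms pull back compatibly (by the previous paragraph, and because $p$, $i$, $g$ are closed immersions) the fibre product does too — alternatively one can run this through the $S^{\circ\circ}$-presentation of the remark following \ref{def:gluedlogcurve}, noting that $(S')^{\circ\circ} = S^{\circ\circ}\times_S S'$. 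Finally $\M_{C^\gl}\subset\M_{\tilde C}$ is cut out by requiring the two slopes at $p$, which are $\Z$-valued functions defined purely in terms of the projections to $\sf P^\gp\cong\Z$, to sum to zero; this condition is visibly preserved by base change, so $(C^\gl)'\cong(C')^\gl$, which is the claim.

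I expect the main obstacle to be precisely the piercing step: verifying that the ``largest submonoid with a prescribed sharpness property'' of \ref{def:piercedcurve} commutes with a possibly non-strict pullback, since maximal sub-objects — like colimits — do not in general commute with base change. Once this is pinned down via the explicit description of $\M_{\pie C,p_i}$ and \ref{lem:ellpizero} (which reduces all the non-combinatorial conditions to vacuous ones), the pushout and slope-condition steps are routine diagram chases, mildly complicated only by working in the quasi-fine rather than fs setting.
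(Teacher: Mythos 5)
The paper itself offers no proof here (the lemma is introduced as ``easy''), so there is nothing to compare against; judged on its own terms, your argument has a genuine gap at exactly the step you single out as the crux: the piercing of \ref{def:piercedcurve} does \emph{not} commute with base change, and \ref{lem:ellpizero} does not repair this. Your ``holds before base change and hence a fortiori after'' only gives the inclusion of the base change $\pie{C}\times_S S'$ into $\pie{C'}$, not the reverse: after a non-strict base change $f\colon S'\to S$, the monoid $\ghost_{S'}$ may contain elements $b'$ with $\bar\alpha'(b')=0$ that are not of the form $c'+\bar f(d)$ with $d\in\ghost_S$, $\bar\alpha(d)=0$, and each such $b'$ yields elements $(b'-n\bar f(\ell_i),\,n)$ of $\ghost_{\pie{C'},p_i}$ with $n<0$ that do not lie in (the saturation of) $\ghost_{\pie{C},p_i}\oplus_{\ghost_{C,p_i}}\ghost_{C',p_i}$. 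Concretely, let $S$ be the standard log point with $\ghost_S=\N$ and $\ell_i=1$, and let $S'$ be the log point with $\ghost_{S'}=\N^2$ and $\bar f(1)=(0,1)$: then $((1,1),-1)$ lies in $\ghost_{\pie{C'},p_i}$ because $(1,1)-(0,1)=(1,0)$ is a nonzero element of $\N^2$, whereas every element of the base-changed monoid whose last coordinate is $-k$ has second coordinate at least $k+1$ (the only generators with negative slope are $((0,a),n)$ with $a\ge -n+1$), so $((1,1),-1)$ and all its multiples are excluded. The same caveat undermines your next two steps as stated: pulling back a log structure is a colimit, so it need not commute with the fibre product defining $\M_{\tilde C}$ nor with passing to a subsheaf, and indeed the $S^{\circ\circ}$ of the alternative presentation also fails to commute with base change, for the same reason.

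The lemma itself is true, and your opening paragraph already contains the correct strategy before you abandon it: reduce to the stalk at the new node and compare directly with the computation in \ref{lem:gluedlogcurve}. The point is that although each intermediate object depends on the structure map $\alpha$ (and so behaves badly under base change), all the $\alpha$-conditions cancel in the end: $\ghost_{C^\gl,p}=\{(a_1,a_2)\in\ghost_S^2:\ell_1+\ell_2\text{ divides }a_1-a_2\}$ is expressed purely in terms of $\ghost_S$ and the lengths, and this cancellation is precisely where \ref{lem:ellpizero} is used. One then checks by hand that this description is stable under base change: if $a_1'-a_2'=n\,\bar f(\ell_1+\ell_2)$ with, say, $n\ge 0$, then $(a_1',a_2')=(a_2',a_2')+\bigl(\bar f(n(\ell_1+\ell_2)),0\bigr)$ with $(n(\ell_1+\ell_2),0)\in\ghost_{C^\gl,p}$, so the natural map $C^\gl\times_S S'\to (C')^\gl$ is an isomorphism on characteristic monoids at the node, and away from the node both sides are identified with $C'$ by \ref{lem:gluedlogcurve}. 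So the fix is to drop the claim that each intermediate construction is compatible with base change and verify compatibility only for the end product.
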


\subsection{Glueing maps}
We continue in the notation of the previous subsection.

\begin{definition}
Let $X$ be a quasi-fine log stack. We define the groupoid of pre-gluing data as the fiber product of groupoids
\[
\begin{tikzcd}
& X(\pie C) \ar[d, "{(-\circ p_1,-\circ p_2)}"] \\
X(S) \ar[r, "\Delta"] & X(S)  \times X(S). \\ 
\end{tikzcd}
\]

We say a pre-gluing datum $f: \pie C \to X$ in $X(S) \times_{X(S) \times X(S)} X(\pie{C})$ is \emph{glueable} if the slopes $\ghost_{X,\ul{f \circ p_i}} \to \Z$ at $p_1$, $p_2$ sum to zero. We denote the full subgroupoid of glueable pre-gluing data by $\Hom_{\gl}(\pie C,X)$.
\end{definition}

\begin{theorem}
\label{thm:universalpropertygluing}
Let $X$ be a log stack. Then the map $\pie C \to C^{\gl}$ induces an equivalence of groupoids
\[
\Hom(C^{\gl}, X) \isom \Hom_{\gl}(\pie C,X). 
\]
\end{theorem}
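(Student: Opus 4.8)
The plan is to reduce the statement to the pushout description of $C^{\gl}$ and then apply the universal property of pushouts in the category of quasi-fine log algebraic spaces, being careful about the slope condition that cuts $C^{\gl}$ out of $\tilde C$. First I would observe that, since a log stack $X$ is by definition a sheaf (stack) on $\LogSch$ and the formation of $C^{\gl}$ commutes with strict \'etale base change and with base change over $S$, it suffices to check the statement for $X$ an affine log scheme, and indeed it is enough to produce, functorially in $X$, a natural bijection on objects together with the corresponding statement on morphisms; the morphism statement follows by the same argument applied to the various fiber products defining morphisms, so I will concentrate on objects.

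Next I would unwind both sides. A map $C^{\gl} \to X$ restricts along $\pie C \to C^{\gl}$ to a map $f\colon \pie C \to X$, and since $p\colon \ul S \to C^{\gl}$ factors through the gluing, precomposing with $g\circ p_1$ and $g\circ p_2$ (which become equal after composing to $C^{\gl}$, as $p_1, p_2$ are identified at the new node) shows $f\circ (g\circ p_1) = f \circ (g\circ p_2)$ as maps $S \to X$; moreover the slopes of $f$ at $p_1$ and $p_2$ sum to zero, because on the level of characteristic sheaves any section of $\ghost_X$ pulls back to a section of $\ghost_{C^{\gl}}$, and by the explicit computation of $\ghost_{C^{\gl},p}$ in \ref{lem:gluedlogcurve} (or directly from the definition of $C^{\gl}$ as the subsheaf of $\M_{\tilde C}$ of elements whose slopes sum to zero) every such section has balanced slopes. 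Hence restriction along $\pie C \to C^{\gl}$ lands in $\Hom_{\gl}(\pie C, X)$, giving the forward functor.

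For the inverse, I would start from a glueable pre-gluing datum $f\colon \pie C \to X$. By definition of pre-gluing data, $f\circ p_1 = f\circ p_2$ as maps $S \to X$, so $f$ together with this common map $S \to X$ forms a cocone on the pushout diagram defining $\tilde C$; by the universal property of the pushout $\tilde C = \pie C \sqcup_{S \sqcup S} S$ in quasi-fine log algebraic spaces, $f$ extends uniquely to $\tilde f\colon \tilde C \to X$. It remains to check that $\tilde f$ factors through $\tilde C \to C^{\gl}$. On underlying schemes this is automatic since $\tilde C \to C^{\gl}$ is an isomorphism; on log structures we must check that the composite $f^\sharp\colon f^{-1}\M_X \to \M_{\tilde C}$ factors through the subsheaf $\M_{C^{\gl}} \subset \M_{\tilde C}$ of balanced-slope elements, and this is exactly the glueability hypothesis on $f$, interpreted via the fact that the slopes of $\tilde f^\sharp(m)$ at $p_1,p_2$ are the slopes of $f^\sharp$ at $p_1,p_2$ (which agree with the slopes of $m$ along $f\circ p_1, f\circ p_2$ pulled back to $\pie C$). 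Uniqueness of the factorisation follows because $\M_{C^{\gl}} \to \M_{\tilde C}$ is injective. Finally I would note that the two constructions are mutually inverse on the nose: restricting $\tilde f|_{C^{\gl}}$ along $\pie C \to C^{\gl}$ recovers $f$ since $\pie C \to \tilde C \to C^{\gl}$ is the original gluing map, and conversely a map $C^{\gl}\to X$ is determined by its restriction to $\pie C$ together with the map on the node, both of which are recovered.

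\textbf{Main obstacle.} I expect the delicate point to be the bookkeeping around log structures that are only quasi-fine (not fs): one must make sure the pushout $\tilde C$ exists and has the stated ghost sheaf (this is the content of \ref{sec:local_pushout}), and that ``taking the subsheaf of balanced slopes'' is compatible with pullback of log structures along $\tilde f$ — i.e. that cutting down by the slope condition commutes with the fiber-product formula $\M_{\tilde C} = g_*\M_{\pie C}\times_{(p\circ i)_*\M_{S\sqcup S}} p_*\M_S$. Once one is comfortable that slopes are well-defined on sections of $\M_{\tilde C,p}$ and additive under the relevant pullbacks, the universal property is formal; the subtlety is purely in verifying that the non-fs pushout behaves as expected, which is why the construction was set up in the category of quasi-fine log algebraic spaces in the first place.
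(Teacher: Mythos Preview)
Your proposal is correct and follows exactly the paper's approach: the paper's two-line proof simply invokes the pushout property of $\tilde C$ to identify $\Hom(\tilde C,X)$ with the groupoid of pre-gluing data, and then restricts to the full subgroupoid where the slopes sum to zero on both sides---precisely the argument you unwind in detail. Your reduction to affine $X$ and the separate treatment of objects versus morphisms are not needed (the pushout property applies directly to stacks once one works in the quasi-fine category), but they do no harm.
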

\begin{proof}
The construction of $\tilde C$ as a pushout gives an isomorphism of groupoids between $\Hom(\tilde{C},X)$ and the groupoid of pre-gluing data. The theorem follows by restricting to the full subgroupoid where the slopes sum to zero on both sides. 
\end{proof}



We will later need to glue maps for the targets $\G_m^\log$ and $\G_m^\trop$. We spell out what happens in \ref{thm:universalpropertygluing} explicitly in these two cases.

\begin{definition}
For $\beta \in \ghost_C^\gp$ resp. $M_C^\gp$, we denote by $\beta(p_i)$ the pullback along $p_i: S \to C$. Explicitly, for a piecewise linear function $\beta$ taking value $a$ at the fiberwise irreducible component containing $p_i$ and with slope $n$, the value $\beta(p_i)$ is $a + n \ell_i$.
\end{definition}

\begin{corollary}
\label{lem:gluingpiecewiselinear}
The map $\pie C \to C^\gl$ induces bijections 
\[
\ghost_{C^{\gl}}^\gp \to \{\beta \in \ghost_{C}^\gp : \beta(p_1) = \beta(p_2) \emph{ and the slopes along } p_1, p_2 \emph{ add to } 0\}
\]
and
\[
\M_{C^{\gl}}^\gp \to \{\beta \in \M_{C}^\gp : \beta(p_1) = \beta(p_2) \emph{ and the slopes along } p_1, p_2 \emph{ add to } 0\}.
\]
\end{corollary}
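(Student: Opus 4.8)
The plan is to deduce this corollary directly from the universal property of the gluing established in \ref{thm:universalpropertygluing}, applied to the two targets $\Gmlog$ (whose $S$-points are $\M_S^\gp(S)$) and $\G_m^\trop$ (whose $S$-points are $\ghost_S^\gp(S)$). Recall that a PL function $\beta \in \ghost_C^\gp(C)$ is precisely a map $C \to \G_m^\trop$ of log stacks, and similarly a section $\beta \in \M_C^\gp(C)$ is a map $C \to \Gmlog$; this identification is compatible with restriction along the marking maps $p_i$, so that $\beta(p_i)$ in the sense of the definition preceding the corollary agrees with the composite $S \xrightarrow{p_i} C \to \G_m^\trop$ (resp.\ $\Gmlog$).

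First I would unwind what a \emph{glueable pre-gluing datum} $f\colon \pie C \to X$ means for $X = \G_m^\trop$: it is an element $\beta \in \ghost_{\pie C}^\gp(\pie C)$ together with the condition that $\beta \circ p_1 = \beta \circ p_2$ in $\ghost_S^\gp(S)$ (this is the pre-gluing condition, i.e.\ lying over the diagonal in $X(S) \times X(S)$) and the condition that the slopes of $\beta$ along $p_1$ and $p_2$ sum to zero (this is glueability). Since $\pie C \to C$ is an isomorphism away from $p_1, p_2$ and induces an isomorphism on groupifications of the ghost sheaves near the markings (an analogue of \ref{lem:puncgroupification} for the piercing, which one checks from \ref{def:piercedcurve}), a PL function on $\pie C$ with prescribed slopes along $p_1, p_2$ is the same as a PL function on $C$ with those slopes, and $\beta \circ p_i$ computed on $\pie C$ equals $\beta(p_i)$ computed on $C$ via the formula $a + n\ell_i$. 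Hence $\Hom_{\gl}(\pie C, \G_m^\trop)$ is exactly the set displayed on the right-hand side of the first bijection. Now \ref{thm:universalpropertygluing} gives $\Hom(C^{\gl}, \G_m^\trop) \isom \Hom_{\gl}(\pie C, \G_m^\trop)$, and the left-hand side is $\ghost_{C^\gl}^\gp(C^\gl)$; tracing through the map $\pie C \to C^\gl$ shows the induced map is the stated one. The argument for $\M_{C^\gl}^\gp$ is verbatim the same, replacing $\G_m^\trop$ by $\Gmlog$ and ghost sheaves by log structures throughout, using that $\Gmlog(T) = \M_T^\gp(T)$ naturally in $T$.

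The one point requiring a little care — and the place I expect the only real friction — is the identification of the slope condition on $\pie C$ with the slope condition appearing in \ref{thm:universalpropertygluing}, since the piercing has a genuinely non-fs, non-finitely-generated ghost monoid near $p_i$ (as noted in the remark after \ref{def:piercedcurve}), so one must be slightly careful that ``the slope along $p_i$'' is well-defined for elements of $\ghost_{\pie C}^\gp$; this is fine because the slope map is the projection to ${\sf P}^\gp \cong \Z$, which makes sense at the level of groupifications regardless of finite generation, and this is exactly the map used to define $C^\gl$ as the subsheaf of $\M_{\tilde C}$ where the two slopes sum to zero. Once this is pinned down, everything else is a matter of chasing the definitions, and in particular one can alternatively read both bijections straight off the fiber-product computation of $\ghost_{C^\gl, p}$ carried out in the proof of \ref{lem:gluedlogcurve} (and its evident variant with $\ghost_S$ replaced by $\M_S$), which already exhibits $\ghost_{C^\gl}^\gp$ as the sub-PL-functions on $C$ with matching values and opposite slopes at $p_1, p_2$.
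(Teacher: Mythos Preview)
Your proposal is correct and is exactly the paper's approach: the corollary is stated immediately after \ref{thm:universalpropertygluing} with the remark that it simply spells out that theorem for the two targets $\Gmlog$ and $\G_m^\trop$, and no further proof is given. Your write-up supplies precisely the details one would fill in (including the groupification identification $\ghost_{\pie C}^\gp \cong \ghost_C^\gp$), so there is nothing to add.
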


\subsection{Gluing maps for moduli spaces of pointed log curves}
\label{sec:sewing:sep}
By applying \ref{def:gluedlogcurve} we obtain the following.

\begin{theorem}
Fix non-negative integers $g_1$, $g_2$, $n_1$, $n_2$, $g$, $n$. Then there are natural gluing maps
\begin{align*}
\mathsf{gl}:\Mpt_{g_1, n_1 + 1} \times \Mpt_{g_2, n_2 + 1} &\to \Mpt_{g_1 + g_2, n_1 + n_2}\\
\mathsf{gl}:\Mpt_{g, n + 2} &\to \Mpt_{g +1, n}
\end{align*}
The gluing maps are relatively representable by log algebraic spaces. On the level of underlying algebraic stacks they coincide with the classical gluing maps. 
\end{theorem}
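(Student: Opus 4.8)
The plan is to construct both maps by applying \ref{def:gluedlogcurve} universally over the moduli stack, using the convention of \ref{sec:local_pushout} that log curves may have disconnected fibres, so that the separating and non-separating cases become instances of a single construction. For the first map I send an $S$-point of $\Mpt_{g_1,n_1+1}\times\Mpt_{g_2,n_2+1}$, that is a pair $(C_1/S,p_1,\dots,p_{n_1+1})$, $(C_2/S,q_1,\dots,q_{n_2+1})$, to the gluing $C^\gl/S$ of the (disconnected) log pointed curve $C_1\sqcup C_2$ along $p_{n_1+1}$ and $q_1$, equipped with the remaining $n_1+n_2$ sections $p_1,\dots,p_{n_1},q_2,\dots,q_{n_2+1}$; by the isomorphism $C^\gl\setminus\{p\}\iso C\setminus\{p_1,p_2\}$ of \ref{lem:gluedlogcurve} these still define disjoint log sections into the classically smooth locus of $C^\gl$. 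For the second map I send $(C/S,p_1,\dots,p_{n+2})$ to the gluing of $C$ along $p_{n+1}$ and $p_{n+2}$, with remaining sections $p_1,\dots,p_n$.

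The first thing to verify is that $C^\gl$, so equipped, defines an object of $\Mpt_{g_1+g_2,n_1+n_2}$ (resp.\ $\Mpt_{g+1,n}$). That $C^\gl/S$ is a log curve is exactly \ref{lem:gluedlogcurve}. Condition~(3) of \ref{def:logpointedcurve} holds: away from the new node $p$ it is transported along the isomorphism with $C\setminus\{p_1,p_2\}$, and at $p$ the curve is vertical, being a node, while $p$ meets none of the remaining sections. For the genus, the partial normalisation $\ul C\to\ul{C^\gl}$ at the new node yields $\chi(\Ocal_{C^\gl})=\chi(\Ocal_C)-1$, giving arithmetic genus $g_1+g_2$ (resp.\ $g+1$), and $C^\gl$ is connected, being a quotient of $C_1\sqcup C_2$ identifying a point of one component with a point of the other (resp.\ a quotient of the connected curve $C$). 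Functoriality in $S$ --- hence that the above assignments are morphisms of categories fibred in groupoids over $\LogSch$, i.e.\ morphisms of log stacks $\gl$ --- follows from the lemma that the formation of $C^\gl$ commutes with base change over $S$.

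It remains to identify the underlying maps and to establish relative representability. Since $\pie C\to C$ is an isomorphism on underlying schemes, the underlying scheme of $C^\gl$ is that of the pushout $\tilde C$, namely the schematic pushout of $\ul C\leftarrow\ul S\sqcup\ul S\xrightarrow{p_1\sqcup p_2}\ul S$, i.e.\ $\ul C$ with the two marked sections identified --- the classical clutching construction. Hence $\ul{\gl}$ coincides with the classical gluing map on $\ul{\Mpt_{g,n}}=\ul{\Mfrak_{g,n}}$, which is representable by algebraic spaces (indeed finite onto the corresponding boundary substack) by the standard theory of prestable curves. For any log algebraic space $T$ over $\Mpt_{g_1+g_2,n_1+n_2}$, the underlying stack of the log fibre product $T\times_{\Mpt}(\Mpt_{g_1,n_1+1}\times\Mpt_{g_2,n_2+1})$ is finite (via integralisation and saturation) over $\ul T\times_{\ul{\Mfrak}}(\ul{\Mfrak}\times\ul{\Mfrak})$, which is an algebraic space; so the log fibre product is a log algebraic space and $\gl$ is relatively representable by log algebraic spaces. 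The only point that needs genuine care here is not to conflate the fs fibre product of log stacks with the naïve fibre product on underlying stacks; since the comparison morphism between them is finite, this does not affect representability. Apart from this, the theorem is bookkeeping: all the geometric content sits in \ref{lem:gluedlogcurve} and in the base-change lemma.
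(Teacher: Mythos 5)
Your proposal is correct and follows exactly the route the paper intends: the paper offers no written proof beyond ``by applying \ref{def:gluedlogcurve} we obtain the following,'' and your argument is precisely the fleshed-out version of that, resting on \ref{lem:gluedlogcurve}, the base-change lemma, and the observation that the underlying schematic pushout is the classical clutching construction. The only (immaterial) deviation is a labelling convention — the paper glues $p_{n_1+1}$ to $q_{n_2+1}$ rather than to $q_1$ — and your care about the fs versus naive fibre product in the representability step is a genuine point that the paper silently elides.
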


\section{Log CohFTs}\label{sec:log_cohFT}
With the work in \Cref{sec:sewing} on log gluing maps, the preliminary work needed for defining a log version of cohomological field theories is done. In this section we present this definition, and a few examples.

\begin{definition}
\label{def:logcft}
Consider the following data:
\begin{itemize}
\item a possibly infinite dimensional vector space $V$ with a basis $(e_i)_{I \in I}$;
\item a non-degenerate symmetric $2$-form $(\eta_{ij})_{(i,j) \in I^2}: V^{\tensor 2} \to k$ with a row and column finite inverse $(\eta^{ij})_{(i,j) \in I^2}$;
\item for every $g,n$ with $2g - 2 + n > 0$ a map
\begin{equation}
\Omega_{g,n} \colon V^{\tensor n} \to \LogCH^*(\Mptst_{g,n})
\end{equation}
\end{itemize}

If $\Omega$ satisfies the following two conditions, it is called a \emph{partial log CohFT}.
\begin{enumerate}
\item \label{def:logcft:sninvariance} The map $\Omega_{g,n}$ is equivariant with respect to the action of the symmetric group $S_n$ acting simultaneously on the source and on the target.
\item \label{def:logcft:separating} Let $g_1 + g_2 = g,n_1 + n_2 = n$, and write $\gl$ for the gluing map $\Mptst_{g_1,n_1} \times \Mptst_{g_1,n_1} \to \Mptst_{g,n}$. Then for every $(v_1,\dots,v_n) \in V^{\tensor n}$ the sum
\[
\sum_{i,j \in I} \eta^{ij} \Omega_{g_1,n_1}(v_1,\dots,v_{n_1}, e_i) \boxtimes \Omega_{g_2,n_2}(v_{n_1+1},\dots,v_{n},e_j)
\]
is a finite sum, and the resulting map
\begin{align*}
  V^{\tensor n} &\to \LogCH(\Mptst_{g_1,n_1} \times \Mptst_{g_1,n_1})\\
               (v_1,\dots,v_n) &\mapsto \sum_{i,j \in I} \eta^{ij} \Omega_{g_1,n_1}(v_1,\dots,v_{n_1}, e_i) \boxtimes \Omega_{g_2,n_2}(v_{n_1+1},\dots,v_{n},e_j)
\end{align*}
is equal to the map
\[\gl^* \circ \Omega_{g,n}: V^{\tensor n} \to \LogCH(\Mptst_{g_1,n_1} \times \Mptst_{g_1,n_1}).\]
\end{enumerate}
If furthermore the following condition, also known as the \emph{loop axiom}, is satisfied, $\Omega$ is called a \emph{log cohomological field theory}, or \emph{log CohFT} for short.
\begin{enumerate}
  \setcounter{enumi}{2}
\item \label{def:logcft:loop} Let $g > 0,n$ be integers, and write $\gl$ for the gluing map $\Mptst_{g-1,n+2} \to \Mptst_{g,n}$. Then for every $(v_1,\dots,v_n) \in V^{\tensor n}$ the sum
\[
\sum_{i,j \in I} \eta^{ij} \Omega_{g_1,n_1}(v_1,\dots,v_{n_1}, e_i, e_j)
\]
is a finite sum, and the resulting map
\begin{align*}
  V^{\tensor n} &\to \LogCH(\Mptst_{g-1,n})\\
               (v_1,\dots,v_n) &\mapsto \sum_{i,j \in I} \eta^{ij} \Omega_{g_1,n_1}(v_1,\dots,v_{n_1}, e_i, e_j)
\end{align*}
is equal to the map
\[\gl^* \circ \Omega_{g,n}: V^{\tensor n} \to \LogCH(\Mptst_{g-1,n}).\]
\end{enumerate}


Furthermore, if $\Omega$ is a (partial) log CohFT and there is a $\ul{1} \in V$ such that the following conditions hold, then $\Omega$ is a \emph{(partial) log CohFT with unit}.
\begin{enumerate}
  \setcounter{enumi}{3}
  \item \label{def:logcft:unit1} Let $\pi: \Mptst_{g,n+1} \to \Mptst_{g,n}$ be the forgetful map of \ref{sec:forget_marking}. Then \[\Omega_{g,n+1}(v_1,\dots,v_n,\ul{1}) = \pi^* \Omega_{g,n}(v_1,\dots,v_{n})\] holds for all $v_1,\dots,v_n \in V$.
  \item \label{def:logcft:unit2} The equation \[\Omega_{0,3}(v_1,v_2,\ul{1}) = \eta(v_1,v_2)\] holds for all $v_1,v_2 \in V$.
\end{enumerate}
\end{definition}

\begin{remark}
For a finite dimensional (partial) log CohFT the map $\eta$ defines an isomorphism $\eta_L\colon V \to V^*, v \mapsto \eta(v,\cdot)$. In general, the fact that $\eta$ is non-degenerate only implies that $\eta_L$ is an injection with image spanned by the duals of $\{e_i : i \in I\}$.
\end{remark}

\begin{remark}
Note that per \ref{rem:mpt03} the ring $\LogCH^*(\Mptst_{0,3})$ is not equal to $\Q$, and hence the definition of the quantum product on $V$ for CohFTs does not automatically generalise to log CohFTs. Still, given a log CohFT with unit, one can define a quantum product on $V$ by taking the quantum product of the topological part of the log CohFT. This allows one to define when a log CohFT is semisimple.

It seems interesting to ask whether semisimple log CohFTs admit a classification in the style of Givental-Teleman.
\end{remark}

\subsection{Examples of log CohFTs}
\label{subs:newCFT}
Every CohFT is naturally a log CohFT.
\begin{proposition}
Let $\Omega_{g,n}: V^{\otimes n} \to \CH^*(\Mbar_{g,n})$ be a (partial) CohFT. Then by composing with the pullback $\CH^*(\Mbar_{g,n}) \to \LogCH^*(\Mptst_{g,n})$ this defines a (partial) log CohFT.
\end{proposition}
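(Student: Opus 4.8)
The plan is to set $\Omega^{\log}_{g,n} \defeq \pi^* \circ \Omega_{g,n}$, where $\pi\colon \Mptst_{g,n} \to \Mbar_{g,n}$ is the forgetful map and $\pi^*$ denotes the pullback $\CH^*(\Mbar_{g,n}) \to \LogCH^*(\Mptst_{g,n})$ from the statement, and then to verify each axiom of \ref{def:logcft} by reducing it, through $\pi$, to the corresponding axiom for $\Omega$. The structural observation that makes this work is the following: by \ref{inthm:gluing} the underlying morphism $\ul{\pi}$ is an isomorphism $\ul{\Mptst_{g,n}} \isom \Mbar_{g,n}$, and since $\Mbar_{g,n}$ is a smooth quasi-compact DM stack we have $\CH^*(\Mbar_{g,n}) \cong \CHOP(\Mbar_{g,n}) = \CHOP(\ul{\Mptst_{g,n}})$ (see \ref{sec:chow_cohomology}); under this identification $\pi^*$ is just the canonical map from the trivial log modification into the colimit of \ref{def:logchow}. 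Hence, for any morphism $f$ among the stacks $\Mptst$ --- a gluing map of \ref{sec:sewing:sep} or a forgetful map of \ref{sec:forget_marking} --- unwinding the definition of $f^*$ on $\LogCH$ using the \emph{trivial} log modification of the target shows that $f^*$, restricted to the image of $\CHOP$ of the underlying stack of the target, is computed by ordinary Chow pullback along the underlying morphism $\ul{f}$. Since by the results of \ref{sec:sewing:sep} and \ref{sec:forget_marking} these underlying morphisms agree (under $\ul{\Mptst}\cong\Mbar$) with the classical gluing and forgetful maps, every compatibility we need reduces to one on $\Mbar_{g,n}$.

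With this in hand the verification will be routine. First, the $S_n$-equivariance \ref{def:logcft:sninvariance} is immediate, since $\pi$ (hence $\pi^*$) intertwines the $S_n$-actions by relabelling markings, so equivariance of $\Omega^{\log}_{g,n}$ follows from that of $\Omega_{g,n}$. For the separating axiom \ref{def:logcft:separating}, for $(v_1,\dots,v_n) \in V^{\tensor n}$ the class $\gl^*\Omega^{\log}_{g,n}(v_1,\dots,v_n)$ is, by the observation above, the image in $\LogCH(\Mptst_{g_1,n_1}\times\Mptst_{g_2,n_2})$ of the classical pullback $\ul{\gl}^*\Omega_{g,n}(v_1,\dots,v_n)$; by the separating axiom for $\Omega$ the latter equals $\sum_{i,j}\eta^{ij}\,\Omega_{g_1,n_1}(v_1,\dots,v_{n_1},e_i)\boxtimes\Omega_{g_2,n_2}(v_{n_1+1},\dots,v_n,e_j)$, a finite sum; and since the maps $\CHOP(\ul\cdot)\to\LogCH$ are ring homomorphisms compatible with pullback along the strict projections, hence with exterior products, the image of that sum is $\sum_{i,j}\eta^{ij}\,\Omega^{\log}_{g_1,n_1}(v_1,\dots,v_{n_1},e_i)\boxtimes\Omega^{\log}_{g_2,n_2}(v_{n_1+1},\dots,v_n,e_j)$, still finite because only finitely many of the original terms were nonzero. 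If $\Omega$ is a full CohFT, I would prove the loop axiom \ref{def:logcft:loop} identically, using the loop gluing map $\Mptst_{g-1,n+2}\to\Mptst_{g,n}$; and if $\Omega$ carries a unit then \ref{def:logcft:unit1} and \ref{def:logcft:unit2} transfer by the same reduction, using compatibility of the forgetful maps and the fact that $\LogCH^0(\Mptst_{0,3})=\Q$, so that $\pi^*$ is the identity in degree $0$.

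I do not expect any genuine obstacle; the argument is purely formal. The one point needing a little care is the structural observation of the first paragraph: that pullback on $\LogCH$, applied to classes coming from $\CHOP$ of the underlying stack of the target, is computed by ordinary Chow pullback along the underlying morphism, and that the exterior product on $\LogCH$ is compatible with the maps $\CHOP(\ul\cdot)\to\LogCH$. Both follow directly from \ref{def:logchow} and the definition of the $\LogCH$-pullback by taking trivial log modifications, together with the strictness of the projections $X_1\times X_2\to X_i$; once these are spelled out, what remains is bookkeeping.
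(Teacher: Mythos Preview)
Your proposal is correct and is precisely the natural argument; the paper in fact states this proposition without proof, treating it as evident from the compatibility of the log gluing and forgetful maps with their classical counterparts (\ref{inthm:gluing}, \ref{sec:sewing:sep}, \ref{sec:forget_marking}). Your write-up simply makes explicit the one point the paper leaves implicit --- that pullback on $\LogCH$ of a class coming from $\CHOP$ of the trivial modification is computed by ordinary $\CHOP$-pullback along the underlying morphism --- and then the axioms transfer formally.
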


We will now construct a large family of log CohFTs that do not come from CohFTs. We begin by recalling a construction of Pandharipande and Zvonkine \cite{Pandharipande2019Cohomological-field}. Fix $g$ and $n$, and let $\gamma \in \CH^*(\Mptst_{g,n})$ be a class that vanishes under pullback along all gluing maps. Given such a class, Pandharipande and Zvonkine explicitly construct a CohFT that for some input takes the value $\gamma$ (their definition of a CohFT is slightly different from ours, involving a $\Z_2$-grading, but by \cite[Remark~12]{Pandharipande2019Cohomological-field} in the case where $\gamma$ has even cohomological degree in $H^*(\Mptst_{g,n})$ this $\Z_2$-grading vanishes).

Suppose now that we have a class $\gamma \in \LogCH^*(\Mptst_{g,n})$ that vanish under pullback along all log gluing maps; we call such a class a \emph{minimal} class. Then the construction of \cite{Pandharipande2019Cohomological-field} immediately generalises to yield a log CohFT.

We will construct two different kinds of minimal classes.

\subsubsection{First construction of a minimal class}
\label{subsub:firstminimal}
It is immediate that any class in $\LogCH^*(\Mptst_{0,3})$ is minimal, as there are no non-trivial gluing maps\footnote{In \cite{Pandharipande2019Cohomological-field} Pandharipande and Zvonkine explicitly assume $(g,n) \neq (0,3)$. However, this is only because they have a parity condition, which is automatic in the even degree case by \cite[Remark 12]{Pandharipande2019Cohomological-field}. Their proof still works for $(g,n) = (0,3)$.}. By \ref{rem:mpt03}, the vector space $\LogCH^*(\Mptst_{0,3})$ is infinite dimensional, yielding a large number of examples of log CohFTs that are not CohFTs.

\subsubsection{Second construction of a minimal class}
\label{subsub:secondminimal}
Let $g = 3, n = 2$. Assume the characteristic of the base field is $0$. Let $\tau_0 \subset \Mpt_{g,n}^{\trop}$ be the one-dimensional cone parametrising tropical pointed curves (see \ref{def:tropicalpointed}) with one vertex, no edges, and legs of equal length. Then $\tau_0$ induces a star subdivision $\tilde{\Mpt}_{g,n}^{\trop}$ of $\Mpt_{g,n}^{\trop}$, and this induces a log blowup $\pi\colon \tilde{\Mpt}_{g,n}^\st \to \Mptst_{g,n}$. Let $\tau_1,\tau_2 \subset \Mpt_{g,n}^{\trop}$ be the two one-dimensional cones parametrising tropical pointed curves with one vertex, no edges and one of the two legs of length $0$. For $i \in \{0,1,2\}$ write $\phi_{\tau_i}$ for the unique strict piecewise linear function on $\tilde{\Mpt}_{g,n}^{\trop}$ that takes value $1$ on the primitive generator of $\tau_i$ and $0$ on all other rays.

For a piecewise polynomial function $f$ on $\Mpt_{g,n}^{\trop}$, we write $\Phi(f)$ for its image in $\LogCH(\Mptst_{g,n})$ under the map \ref{eq:pptologch}.
We claim the class $\gamma \coloneqq \lambda_{g}\lambda_{g-1} \psi_1 \Phi(\phi_{\tau_0}) \in \LogCH^4(\Mptst_{g,n})$ is minimal and not contained in $\CH^4(\Mptst_{g,n})$. For the first part, by \cite[Proposition~2.1]{buryak2016top} we have that the class $\lambda_g \lambda_{g-1} \psi_1$ is minimal and hence so is $\gamma$.

By \ref{lem:pullbackinjective}, it remains to show that $\gamma$ is not the pullback of a class in $\CH^7(\Mptst_{g,n})$. Note that $\tilde{\Mpt}_{g,n}^\st$ is of dimension $9$, while $\Mptst_{g,n}$ is of dimension $8$. In fact, $\pi$ is a $\P^1$-bundle, and in particular $\pi_*\pi^* = 0$. Note that $\Phi(\phi_{\tau_0})$ is minus the class of a cycle that maps one-to-one to $\Mptst_{g,n}$, and $\pi_* \Phi(\phi_{\tau_0}) = -[\Mptst_{g,n}]$. Hence $\pi_* \gamma = \lambda_{g}\lambda_{g-1} \psi_1 \ne 0$. So $\gamma$ is not the pullback of a class in $\Mptst_{g,n}$.

Hence there is a log CohFT $(\Omega_{g,n})_{g,n}$ for which $\Omega_{2,2}$ takes a value in $\LogCH^4(\Mptst_{2,2}) \setminus \CH^4(\Mptst_{2,2})$; in particular, it does not come by pullback from a CohFT. 

\begin{lemma}
\label{lem:pullbackinjective}
Let $X$ be a finite type algebraic stack with log structure. Then the pullback 
\begin{equation}
f\colon \CH(X) \to \LogCH(X)
\end{equation}
is injective. 
\end{lemma}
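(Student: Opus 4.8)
The statement to prove is Lemma~\ref{lem:pullbackinjective}: for a finite type algebraic stack $X$ with log structure, the pullback $f\colon \CH(X) \to \LogCH(X)$ is injective. The plan is to produce a left inverse to $f$ using proper pushforward along the log modifications appearing in the colimit defining $\LogCH(X)$.

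First I would recall that $\LogCH(X) = \colim_{\tilde X \to X} \CHOP(\tilde X)$, where $\tilde X \to X$ ranges over log modifications with $\tilde X$ a finite type algebraic stack with log structure. Since $X$ itself is finite type with log structure, it is an initial object of this diagram, so the structure map $\CHOP(X) \to \LogCH(X)$ is simply the colimit coprojection; concretely, a class in the kernel of $f$ is represented by a class $\alpha \in \CH(X)$ whose image $p^! \alpha \in \CHOP(\tilde X)$ vanishes for some log modification $p\colon \tilde X \to X$ (here I would use, via \ref{sec:chow_cohomology}, the identification of operational Chow with ordinary Chow for the relevant stacks, or simply argue directly with operational classes). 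The key point is then that log modifications are proper, representable, birational, and of relative dimension $0$; in fact each such $p$ is (dominated by) a log blowup, hence projective and an isomorphism over a dense open. So $p$ admits a proper pushforward $p_*\colon \CHOP(\tilde X) \to \CHOP(X)$ (or on Chow groups $\CH(\tilde X) \to \CH(X)$) compatible with the operational structures, and since $p$ is birational and representable the projection formula gives $p_* p^! = \mathrm{id}$ on $\CH(X)$ — this is the standard fact that pushing forward along a birational proper representable morphism and then pulling back recovers the original class, because $p_*[\tilde X] = [X]$ and $p_*(p^!\alpha \cap [\tilde X]) = \alpha \cap p_*[\tilde X] = \alpha \cap [X]$.

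Putting this together: if $\alpha \in \CH(X)$ with $f(\alpha) = 0$, then there is a log modification $p\colon \tilde X \to X$ with $p^!\alpha = 0$ in $\CHOP(\tilde X)$, hence $\alpha = p_* p^! \alpha = p_*(0) = 0$. Therefore $f$ is injective.

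The main obstacle I anticipate is purely bookkeeping in the operational setting: one must check that a class killed by $f$ really is killed already at a \emph{finite} stage of the colimit (which is automatic since the colimit is filtered), and that proper pushforward along $p$ is available and satisfies the projection formula at the level of $\CHOP$ (not just on the naive Chow groups of schemes). Since log modifications are representable, proper, and can be dominated by log blowups — which are projective — the pushforward exists and the projection formula $p_* p^! = \mathrm{id}$ holds because $p$ is birational and degree one. If one prefers to avoid operational subtleties, one can instead reduce to the Kresch Chow groups by the comparisons recalled in \ref{sec:chow_cohomology} (using that $X$ and its log blowups are stratified by global quotient stacks), where the birational-invariance argument is completely standard.
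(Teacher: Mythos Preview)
Your argument has a genuine gap: you assume that log modifications $p\colon \tilde X \to X$ are birational (``an isomorphism over a dense open''), and use this to get $p_* p^! = \mathrm{id}$. But this is false precisely in the setting where the lemma is applied in the paper. The stacks $\Mptst_{g,n}$ have generically non-trivial log structure of rank $n$, and their log blowups are \emph{not} birational; for instance $\Mptst_{0,3}$ is a point with log structure $\bb N^3$, and its log blowups are $2$-dimensional (see \ref{rem:mpt03}). The paper makes this point explicitly in the introduction when motivating the need for a more general notion of log Chow. In such cases $p_*[\tilde X]$ lands in the wrong degree and your projection-formula splitting collapses to zero rather than the identity.

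The paper's proof avoids this by using only that log blowups are proper and surjective (which does hold in full generality and is stable under base change), and then invoking the general fact that pullback on operational Chow cohomology along a proper surjective morphism is injective \cite[\S4.1]{edidina2022cone}. That result is proved, roughly, by choosing a closed substack of the source mapping generically finitely onto the target and using it to split the pullback up to a nonzero rational factor --- a refinement of your $p_* p^!$ idea that does not require $p$ itself to be birational. Your fallback suggestion of reducing to Kresch's Chow groups via the comparisons in \ref{sec:chow_cohomology} does not help either, since the obstruction is geometric (non-birationality of $p$), not a bookkeeping issue with operational classes.
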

\begin{proof}
This is immediate from the fact that log blowups are proper and surjective (and both these properties are stable under base-change), together with the injectivity of pullback of Chow cohomology along proper surjective morphisms \cite[\S 4.1]{edidina2022cone}. 
\end{proof}

\section{The log Double ramification cycle as a partial log CohFT}\label{sec:logDR_as_CohFT}

We begin by lifting the construction of the double ramification cycle from \cite{Marcus2017Logarithmic-com} to our stack of log pointed curves, to give a more direct construction of the log DR cycle from \ref{def:logDR_via_pullback}. 

\begin{definition}
Let $S$ be a log stack. We write $\bb G_{\log, S}$ and $\bb G_{\trop, S}$ for the sheaves of abelian groups on the big strict \'etale site of $S$ given by $T \mapsto \M^\gp_T(T)$ and $T \mapsto \ghost^\gp_T(T)$ respectively. A \emph{log line} over $S$ is a $\bb G_{\log, S}$ torsor, and a \emph{tropical line} is a $\bb G_{\trop, S}$ torsor. 
\end{definition}

\begin{definition}
Given non-negative integers $g$ and $n$, and a vector of integers $\ul a = (a_1, \dots, a_n)$ with $\sum_i a_i = 0$, we define $\Divpt_{g, \ul a}$ to be the fibred category over $\Mptst_{g,n}$ whose objects are tuples 
\begin{equation}
(C/S, p_1, \dots, p_n, P/S, \bar \alpha)
\end{equation}
where $(C/S, p_1, \dots, p_n)$ is a stable log pointed curve, $P/S$ is a tropical line, and $\bar\alpha\colon C \to P$ is a map whose outgoing slope\footnote{The slope is independent of choice of local trivialisation of the torsor. } at leg $i$ is equal to $a_i$. 

We similarly define $\Divpt_{g, \ul a}(\ca O)$ to be the fibered category over $\Mptst_{g,n}$ whose objects are tuples 
\begin{equation}
(C/S, p_1, \dots, p_n, \ca P/S, \alpha)
\end{equation}
where $(C/S, p_1, \dots, p_n)$ is a stable log pointed curve, $\ca P/S$ is a \emph{log} line, and $\alpha\colon C \to \ca P$ is a map whose slope at leg $i$ is equal to $a_i$. 
\end{definition}

\begin{remark}
Locally on $S$ we can trivialise the torsors $P$ and $\ca P$, so that $\bar \alpha $ becomes a PL function on $C$, and $\alpha $ becomes a generating section of the line bundle $\ca O_C(\bar\alpha)$. We work with torsors in order that $\Divpt_{g, \ul a}$ and $\Divpt_{g, \ul a}(\ca O)$ be sheaves. 
\end{remark}

Write $\Picrel_{g,n}$ for the universal Picard space of $\Cpt_{g,n}/\Mptst_{g,n}$; this is the strict \'etale sheafification of the presheaf $T \mapsto \on{Pic}(C_T)/\on{Pic}(T)$, see \cite[\S 8]{Bosch1990Neron-models} or \cite[Part 5]{Fantechi2005Fundamental-alg} for background. It is a group algebraic space over $\Mptst_{g,n}$, which we equip with the strict log structure over $\Mptst_{g,n}$. 

There is an Abel-Jacobi map 
\begin{equation}
\AJ\colon   \Divpt_{g,  \ul a} \to \Picrel_{g,n}; \;\;\; \bar\alpha \mapsto [\ca O_C(\bar\alpha)]. 
\end{equation}
To define this map on a tuple $(C/S, p_1, \dots, p_n, P/S, \bar \alpha)$ we first shrink $S$ until we can choose an isomorphism $f\colon P \isom \bb G_{\trop, S}$, then take the line bundle $\ca O_C(f(\bar\alpha))$. The line bundle depends on the choice of trivialisation, but the class in $\Picrel_{g,n}$ does not, so this glues to a global construction. 

We let $\J_{g,n}$ denote the multidegree $0$ locus inside $\Picrel_{g,n}$ (parametrising line bundles having degree 0 on every irreducible component of every geometric fibre), often called the \emph{generalised Jacobian}.  

\begin{definition}
Define $\Divpt_{g, \ul a}^0 = \Divpt_{g,  \ul a} \times_{\Picrel_{g,n}} \J_{g,n}$.
\end{definition} 

\begin{lemma}
\label{lem:key_DR_square}
The square
\begin{equation}\label{eq:key_DR_square}
 \begin{tikzcd}
  \Divpt_{g, \ul a}(\ca O) \arrow[r] \arrow[d]& \Mptst_{g,n} \arrow[d, "\ca O"]\\
  \Divpt_{g, \ul a}^0 \arrow[r, "\AJ"]& \J_{g,n}. \\
\end{tikzcd}
\end{equation}
is a pullback. 
\end{lemma}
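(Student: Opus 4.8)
The plan is to verify directly that the square is Cartesian by exhibiting the comparison morphism $\Divpt_{g,\ul a}(\ca O)\to \Divpt^0_{g,\ul a}\times_{\J_{g,n}}\Mptst_{g,n}$ and checking it is an isomorphism on $S$-points for every fs log scheme $S$. This suffices because both corners are sheaves on $\LogSch$ over $\Mptst_{g,n}$: a log line $\ca P/S$ (resp.\ tropical line) with a section $\alpha$ has no non-trivial automorphisms, since an automorphism is translation by a section of $\bb G_{\log,S}$ (resp.\ $\bb G_{\trop,S}$) and $\M_S^\gp\hookrightarrow \pi_*\M_C^\gp$ (resp.\ $\ghost_S^\gp\hookrightarrow\pi_*\ghost_C^\gp$) forces it to be trivial. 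The comparison map sends $(C/S,p_1,\dots,p_n,\ca P,\alpha)$ to its tropicalisation: push $\ca P$ and $\alpha$ forward along the surjection $\bb G_{\log,S}\to\bb G_{\trop,S}$ (quotient by $\ca O^\times$), obtaining a tropical line $\bar{\ca P}$ with section $\bar\alpha$ whose leg-slopes are still $a_i$ (the slope maps $\M_C^\gp\to\ghost_C^\gp$ are compatible with this quotient), and remembers the underlying pointed curve as the point of $\Mptst_{g,n}$. That this lands in the fibre product is the computation $\AJ(\bar\alpha)=\ca O(C/S)$ in $\J_{g,n}(S)$: strict \'etale locally on $S$ and after trivialising, $\alpha$ is a section of $\M_C^\gp$ lifting the piecewise linear function $\bar\alpha$ across $1\to\ca O_C^\times\to\M_C^\gp\to\ghost_C^\gp\to 1$, so the connecting class $[\ca O_C(\bar\alpha)]\in\on{Pic}(C)$ vanishes, hence $\ca O_C(\bar\alpha)$ is locally pulled back from $S$ and represents $0\in\Picrel_{g,n}(S)$, which is exactly the image of the zero section $\ca O$; commutativity of the square is the same calculation.

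To see bijectivity on $S$-points I would work strict \'etale locally on $S$ and trivialise all torsors, which identifies $\Divpt_{g,\ul a}(\ca O)(S)$ with $\{\alpha\in\M_C^\gp(C)\mid \text{slopes }a_i\}/\pi^*\M_S^\gp(S)$ and $\Divpt_{g,\ul a}(S)$ with $\{\bar\alpha\in\ghost_C^\gp(C)\mid \text{slopes }a_i\}/\pi^*\ghost_S^\gp(S)$, the comparison map being induced by $\M_C^\gp\to\ghost_C^\gp$. Using the exact sequence $1\to\ca O_C^\times\to\M_C^\gp\to\ghost_C^\gp\to 1$ one computes the fibre over a class $[\bar\alpha]$ (after normalising a representative so that some $\alpha$ maps exactly to $\bar\alpha$) to be $\{\alpha\in\M_C^\gp(C)\mid \alpha\mapsto\bar\alpha\}/\pi^*\ca O_S^\times(S)$; this set is empty when the connecting-map obstruction $[\ca O_C(\bar\alpha)]\in\on{Pic}(C)$ is non-zero, and otherwise is a torsor under $\ca O_C^\times(C)=\ca O_S^\times(S)$ modulo the same group, hence a single point. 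Here the key input is $\pi_*\ca O_C=\ca O_S$ for the (connected) proper curve $C/S$, which gives $\ca O_C^\times(C)=\ca O_S^\times(S)=\pi^*\ca O_S^\times(S)$. Sheafifying over $S$, the comparison map is therefore injective with image exactly those $[\bar\alpha]$ for which $\AJ(\bar\alpha)=0$ in $\Picrel_{g,n}(S)$; since $0$ lies in $\J_{g,n}(S)$, this is equivalent to $\AJ(\bar\alpha)=0$ in $\J_{g,n}(S)$, and the set of such $[\bar\alpha]$ is precisely $\bigl(\Divpt^0_{g,\ul a}\times_{\J_{g,n}}\Mptst_{g,n}\bigr)(S)$ (unwinding $\Divpt^0_{g,\ul a}=\Divpt_{g,\ul a}\times_{\Picrel_{g,n}}\J_{g,n}$ and the zero section). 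This gives the desired isomorphism, naturally in $S$, so the square is a pullback.

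The step I expect to be the main obstacle is not any single hard computation but the sheaf-theoretic bookkeeping: making rigorous the reduction to a strict \'etale local situation with trivialised torsors, correctly identifying the obstruction to lifting $\bar\alpha$ through $\M_C^\gp\to\ghost_C^\gp$ with the Abel--Jacobi class $\AJ(\bar\alpha)$ in $\Picrel_{g,n}$, and checking that the rigid fibre product over the \emph{space} $\J_{g,n}$ (which records only the condition ``$\AJ$-class $=0$'') sees exactly the same data as the groupoid of log-line lifts (the quotient by $\pi^*\ca O_S^\times$ killing the a priori extra objects). One can shorten this by invoking the comparison of $\Divpt_{g,\ul a}(\ca O)$ with the log Picard space of Molcho--Wise/Marcus--Wise, where the relevant low-degree terms of $R\pi_*$ of $1\to\ca O_C^\times\to\bb G_{\log,C}\to\bb G_{\trop,C}\to 1$ are already in place. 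A minor additional point, irrelevant for the present (connected) setting but needed if one allows the disconnected fibres of \ref{sec:local_pushout}, is that $\pi_*\ca O_C^\times/\ca O_S^\times$ must then be treated componentwise.
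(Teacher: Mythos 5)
Your proof is correct and follows essentially the same route as the paper's: both reduce to a strict \'etale local computation on $S$ with trivialised torsors, identify the obstruction to lifting $\bar\alpha$ along $\M_C^\gp\to\ghost_C^\gp$ with the triviality of $\ca O_C(\bar\alpha)$, and absorb the ambiguity in the lift into automorphisms of the log line via $\pi_*\ca O_C^\times=\ca O_S^\times$. Your version just makes the fibre computation and the sheaf/setoid bookkeeping more explicit than the paper's construction of a local inverse.
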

\begin{proof}
Commutativity of the square yields a map from $\Divpt_{g, \ul a}(\ca O)$ to the fibre product. Checking that this map is an isomorphism can be done locally, and we do so by constructing an inverse map. A point of the fibre product is a point $(C/S, p_1, \dots, p_n, P/S, \bar \alpha)$ of $\Divpt_{g,\ul a}$ such that $[\ca O_C(\bar\alpha)]$ is trivial, and we may shrink $S$ until the torsor $P$ is trivial; choose an identification $P = \bb G_{\trop, S}$. Then $\bar \alpha$ is a  PL function on $C$ such that the line bundle $\ca O_C(\bar\alpha)$ descends to $S$. 

To build a point of $\Divpt_{g, \ul a}(\ca O)$, we choose $\ca P = \bb G_{\log, S}$. Perhaps shrinking $S$ again we may assume that $\ca O_C(\bar\alpha)$ is trivial (since it descends to $S$), hence we can choose a generating section $\alpha$ of $\ca O_C(\bar\alpha)$, yielding the desired point of $\Divpt_{g, \ul a}(\ca O)$. A different choice of trivialisation of $\ca O(\bar\alpha)$ would yield a different section $\alpha$, but there is a (unique) automorphism of the torsor $\ca P = \bb G_{\log, S}$ interchanging these choices. 
\end{proof}


The next lemma is the technical heart of the definition of the double ramification cycle. 
\begin{lemma}
The natural map
\begin{equation}
\pi\colon \Divpt_{g, \ul a}(\ca O) \to \Mptst_{g,n}
\end{equation}
is proper. 
\end{lemma}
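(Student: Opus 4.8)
The plan is to reduce properness of $\pi\colon \Divpt_{g,\ul a}(\ca O) \to \Mptst_{g,n}$ to a known properness statement via the pullback square of \Cref{lem:key_DR_square}. By that lemma, $\pi$ is the base-change along $\ca O\colon \Mptst_{g,n} \to \J_{g,n}$ of the Abel--Jacobi map $\AJ\colon \Divpt^0_{g,\ul a} \to \J_{g,n}$, so it suffices to show that $\AJ\colon \Divpt^0_{g,\ul a} \to \J_{g,n}$ is proper. (Properness is stable under base-change, and the forgetful map $\Divpt_{g,\ul a}(\ca O)\to\Mptst_{g,n}$ factors as $\Divpt_{g,\ul a}(\ca O) \to \Divpt^0_{g,\ul a}\times_{\J_{g,n}}\Mptst_{g,n}\xrightarrow{\sim}\Divpt_{g,\ul a}(\ca O)$ by the pullback property, so the two maps to $\Mptst_{g,n}$ are literally the same.)

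First I would unwind what $\AJ$ looks like over a fixed log pointed curve $(C/S,p_1,\dots,p_n)$: after shrinking $S$ and trivialising the tropical line, a point of $\Divpt^0_{g,\ul a}$ is a piecewise linear function $\bar\alpha$ on $C$ with outgoing slopes $a_i$ at the legs such that $\ca O_C(\bar\alpha)$ has multidegree zero and lies over a prescribed class in $\J_{g,n}$. Two such PL functions give the same point of $\J_{g,n}$ iff they differ by a PL function pulled back from $S$, i.e.\ iff they induce the same Cartier divisor up to the subgroup coming from $S$. So the fibres of $\AJ$ are (torsors under) the lattice/polytope of PL functions on the tropicalisation of $C/S$ with fixed leg-slopes $a_i$ and fixed associated divisor class — exactly the combinatorial object governing the DR cycle in \cite{Marcus2017Logarithmic-com}, \cite{Holmes2017Extending-the-d}. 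This is where the log structure on the \emph{pierced}/pointed curves matters: because the markings are log sections, the slopes $a_i$ at the legs are honest data in $\ghost$, and the space of admissible $\bar\alpha$ is cut out by finitely many integral affine conditions.

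The main technical step, and the one I expect to be the real obstacle, is the valuative criterion: given a trait $\Spec R$ (henselian DVR, log structure pulled back appropriately) with a log pointed curve $C/\Spec R$, a class in $\J$, and a point of $\Divpt^0_{g,\ul a}$ over the generic point, one must produce a unique extension over the closed point, possibly after a base-change of the trait and a subdivision. Concretely, a generic PL function $\bar\alpha_\eta$ with the right slopes and divisor class must be extended to a PL function on the whole of $C/\Spec R$; the subtlety is the usual one in the DR story — over the special fibre the tropicalisation may acquire new edges (the metric graph degenerates), and one has to choose the slopes of $\bar\alpha$ on the new edges so that the divisor class still matches. After a ramified base-change (to make lengths divisible enough) and a log modification of $\Spec R$ (equivalently, a subdivision of the tropical base, as in our $\LogCH$ formalism — recall log modifications don't change properness), such an extension exists and is unique. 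I would model this argument closely on \cite[\S4--5]{Marcus2017Logarithmic-com} and \cite{Holmes2017Extending-the-d}, noting that in our setting the pierced log structure is precisely what makes the relevant PL functions (now allowed negative slopes at the legs, per \ref{subsec:pierced}) available, so the extension problem is the tropical one of completing a piecewise linear function on a metric graph with fixed boundary slopes and fixed linear-equivalence class, which has a unique solution after suitable subdivision. Properness then follows from the valuative criterion together with finite type and quasi-separatedness, both of which are straightforward once one knows (from the fibre description above) that the fibres of $\AJ$ are finite-type.
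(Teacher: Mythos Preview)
Your reduction via the pullback square of \Cref{lem:key_DR_square} to properness of $\AJ\colon \Divpt^0_{g,\ul a}\to\J_{g,n}$ is valid, but you then set about essentially re-proving Marcus--Wise's properness theorem by hand via the valuative criterion. The paper's route is much shorter and bypasses all of this: properness is a property of the \emph{underlying} algebraic stacks and does not see the log structures. Since the forgetful map $\Mptst_{g,n}\to\Mbar_{g,n}$ is an isomorphism on underlying stacks (\ref{sec:basic}), and likewise the underlying stack of $\Divpt_{g,\ul a}$ coincides with (a connected component of) the space $\cat{Div}$ of Marcus--Wise with matching Abel--Jacobi maps, the map $\pi$ on underlying stacks is literally a base-change of the map whose properness is \cite[Theorem~4.3.2]{Marcus2017Logarithmic-com}. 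One line, no valuative criterion needed.

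Your sketch of the valuative criterion also has some imprecisions that would need fixing if you pursued it: you allow yourself a ``log modification of $\Spec R$'' when extending, but properness is tested on the underlying schemes, and log modifications of a trait with nontrivial generic log structure need not be isomorphisms on underlying schemes; allowing such modifications is not part of the valuative criterion for properness. The Marcus--Wise argument you want to imitate handles this correctly, but at that point you are simply reproducing their proof rather than citing it. The cleaner move is the paper's: strip the log structure, identify the underlying diagram with the one in \cite{Marcus2017Logarithmic-com}, and invoke their theorem.
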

\begin{proof}Properness depends only on the underlying stacks (not on the log structures). The underlying algebraic stack of $\Mptst_{g,n}$ is $\Mbar_{g,n}$, and the underlying algebraic stack of $\Divpt_{g, \ul a}$ is the same as the underlying algebraic stack of a connected component of the space $\cat{Div}$ of Marcus and Wise (and the Abel-Jacobi maps match up). The result then follows by base-changing \cite[Theorem 4.3.2]{Marcus2017Logarithmic-com}. 
%
\end{proof}

\subsection{Virtual fundamental class}
\label{subsec:virtualclass}
There are multiple possible equivalent definitions for the normal log double ramification cycle, for example \cite[Lemma~4.5]{Holmes2021Logarithmic-int} and \cite[Lemma~4.12]{Holmes2021Logarithmic-int}, or the formula \cite[Theorem~B]{Holmes2022Logarithmic-double}. Here we will use \cite[Definition~4.4]{Holmes2021Logarithmic-int} in the context of log pointed curves. We recall in outline \cite[Section~3.5]{Holmes2021Logarithmic-int}.

We start by considering the commutative diagram
\begin{equation}
\label{eq:drdiagnonlog}
\begin{tikzcd}
\Divpt_{g, \ul a}(\ca O) \arrow[r]\arrow[d] \arrow[dr, phantom, "\lrcorner", very near start] & \Mptst_{g,n} \arrow[d, "e"] \\
\Divpt_{g, \ul a}^0 \arrow[r]\arrow[d] & \J_{g,n} \\
\Mptst_{g,n}
\end{tikzcd}
\end{equation}
Here the pullback square is from \Cref{lem:key_DR_square}. 

We choose smooth log blowups $\widetilde{\Mpt}_{g,n}^\st$ resp. $\widetilde{\Divpt}_{g, \ul a}^0$ of $\Mptst_{g,n}$ and $\Divpt_{g,\ul a}^0$ such that the map $\widetilde{\Divpt}_{g,\ul a}^0 \to \widetilde{\Mpt}_{g,n}^\st$ becomes an open immersion of smooth log smooth stacks. We define $Z$ fitting into the diagram
\begin{equation}
\label{eq:drdiaglog}
\begin{tikzcd}
Z \arrow[r]\arrow[d] \arrow[dr, phantom, "\lrcorner", very near start] \arrow[dd, bend right, swap, "j"]& \Mptst_{g,n} \arrow[d, "e"] \\
\widetilde{\Divpt}_{g, \ul a}^0 \arrow[r]\arrow[d] & \J_{g,n} \\
\widetilde{\Mpt}_{g,n}^\st \arrow[d] & \\
\Mptst_{g,n} & \\
\end{tikzcd}
\end{equation}

Let $[\widetilde{\Divpt}_{g,\ul a}^0]$ denote the fundamental class in $\CH_*(\widetilde{\Divpt}_{g, \ul a}^0)$. Then the class $j_* e^! [\widetilde{\Divpt}_{g,  \ul a}^0]$ lies in $\CH_*(\widetilde{\Mpt}^\st_{g,n})$, and taking its Poincar\'e dual\footnote{That is, allowing it to act via intersection/Gysin homomorphism. } yields a class in $\CHOP(\widetilde{\Mpt}^\st_{g,n})$. This in turn gives a class in $\LogCH^*(\Mptst_{g,n})$.
\begin{lemma}
\label{lem:llogdr}
The Poincar\'e dual of the class $j_* e^! [\widetilde{\Divpt}_{g,\ul a}^0] \in \CH_*(\widetilde{\Mpt}^\st_{g,\ul{a}})$ is equal to the class $\LLogDR(\ul a)$ from \ref{def:logDR_via_pullback}.
\end{lemma}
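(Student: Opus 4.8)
The plan is to show that diagram \eqref{eq:drdiaglog}, and the class $j_*e^![\widetilde{\Divpt}_{g,\ul a}^0]$ extracted from it, is nothing but the pullback along $\pi$ of the construction of $\LogDR_g(\ul a)$ given in \cite[\S3.5, Definition~4.4]{Holmes2021Logarithmic-int} (which in turn agrees with the Holmes--Pixton--Schmitt cycle of \cite[\S9]{Holmes2017Multiplicativit}). Recall that [loc.\ cit.] builds $\LogDR_g(\ul a)$ from the marked analogue of \eqref{eq:drdiaglog}: over $\Mbar_{g,n}$ one has the Marcus--Wise space $\cat{Div}_{g,\ul a}$, its multidegree-zero locus $\cat{Div}^0_{g,\ul a}$, the universal Jacobian $\J^{\mathrm{mk}}_{g,n}$ with its unit section, a pullback square identifying $\cat{Div}_{g,\ul a}(\ca O)$ with the corresponding fibre product (the marked analogue of \ref{lem:key_DR_square}), and, after passing to suitable smooth log blowups, one forms the proper pushforward of the refined Gysin pullback of the fundamental class of $\widetilde{\cat{Div}}^0_{g,\ul a}$ and takes its Poincaré dual. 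This recipe produces a class in $\CHop$, but by carrying out exactly the same operations in $\CHOP$ it lifts to a class in $\CHOP(\widetilde{\Mbar}_{g,n})$, hence to $\LogCH(\Mbar_{g,n})$ (this is the lift of the constructions of \cite{Holmes2021Logarithmic-int} to $\LogCH$ mentioned in \ref{subsec:logchow}); this is the class $\LogDR_g(\ul a)$ that we pull back.

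First I would identify each object of \eqref{eq:drdiaglog} as the $\pi$-pullback of its marked counterpart. The universal Picard space, and hence $\J_{g,n}$ and the section $\ca O$, depend only on the underlying curve $\ul{\Cpt_{g,n}}/\ul{\Mptst_{g,n}}$, which coincides with the pullback of the underlying universal marked curve along the underlying isomorphism of $\pi$; so $\J_{g,n}=\pi^*\J^{\mathrm{mk}}_{g,n}$ and $e=\pi^*(\text{unit section})$ as log stacks, with $\AJ$ the pullback of the marked Abel--Jacobi map. The fibred category $\Divpt_{g,\ul a}$ parametrises a tropical line $P/S$ together with a map $\bar\alpha\colon C\to P$ of prescribed outgoing slopes; since the forgetful map $\Mptst_{g,n}\to\Mbar_{g,n}$ leaves the underlying log curve $C/S$ unchanged (only the sections pass from logarithmic to schematic), such a datum is exactly a point of $\cat{Div}_{g,\ul a}$ over the induced point of $\Mbar_{g,n}$, so $\Divpt_{g,\ul a}=\pi^*\cat{Div}_{g,\ul a}$, with Abel--Jacobi maps matching as in the proof of properness above. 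Consequently $\Divpt^0_{g,\ul a}=\pi^*\cat{Div}^0_{g,\ul a}$ and, by \ref{lem:key_DR_square} together with its marked analogue, $\Divpt_{g,\ul a}(\ca O)=\pi^*\cat{Div}_{g,\ul a}(\ca O)$. I would then choose the smooth log blowups $\widetilde{\Mpt}_{g,n}^\st$ and $\widetilde{\Divpt}_{g,\ul a}^0$ in \eqref{eq:drdiaglog} to be the base changes along $\pi$ of the choices $\widetilde{\Mbar}_{g,n}$ and $\widetilde{\cat{Div}}^0_{g,\ul a}$ used in \cite[Definition~4.4]{Holmes2021Logarithmic-int}: base changes of log blowups are log blowups, and since $\pi$ is an isomorphism on underlying stacks these base changes have the same underlying stacks (and hence the same smoothness, and the same open immersion $\widetilde{\Divpt}_{g,\ul a}^0\to\widetilde{\Mpt}_{g,n}^\st$) as the marked choices, so the refined Gysin pullback along $e$ and the Poincaré duality are available just as in the marked case.

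Finally, since $\pi$ and every blow-down map in \eqref{eq:drdiaglog} is an isomorphism on underlying stacks, and since $\CHOP$, fundamental classes, refined Gysin pullback along $e$, proper pushforward along $j$, and the intersection action implementing Poincaré duality all depend only on underlying stacks and their underlying morphisms, the canonical identifications $\CHOP(\widetilde{\Mpt}_{g,n}^\st)\cong\CHOP(\widetilde{\Mbar}_{g,n})$ and $\CHOP(\widetilde{\Divpt}_{g,\ul a}^0)\cong\CHOP(\widetilde{\cat{Div}}^0_{g,\ul a})$ carry the pointed class $j_*e^![\widetilde{\Divpt}_{g,\ul a}^0]$ to the marked class that defines $\LogDR_g(\ul a)$. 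Taking Poincaré duals and passing to the colimits defining $\LogCH$, and unwinding the definition of $\pi^*$ from \ref{subsubsec:logpullback} — which on a class represented on a marked log blowup $\widetilde{\Mbar}_{g,n}$ is precisely this canonical identification followed by the inclusion into the pointed colimit — gives $\LLogDR(\ul a)=\pi^*\LogDR_g(\ul a)$, which is the class of \ref{def:logDR_via_pullback}. The step requiring genuine care rather than bookkeeping is the identity $\Divpt_{g,\ul a}=\pi^*\cat{Div}_{g,\ul a}$ and its $\ca O$-variant: one must verify that replacing schematic markings by logarithmic ones does not alter the moduli of tropical lines with prescribed slopes, which rests on the fact that the two notions of pointed curve share the same underlying log curve, together with the matching of Abel--Jacobi maps already used above in the proof that $\Divpt_{g,\ul a}(\ca O)\to\Mptst_{g,n}$ is proper.
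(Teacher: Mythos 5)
Your first step---identifying every object of the diagram defining $j_* e^! [\widetilde{\Divpt}_{g,\ul a}^0]$ as the pullback along $\pi$ of its marked counterpart ($\J_{g,n}$, the unit section $e$, $\Divpt_{g,\ul a}$, $\Divpt^0_{g,\ul a}$, $\Divpt_{g,\ul a}(\ca O)$, and the Abel--Jacobi maps), and then observing that for blowups pulled back along $\pi$ the whole construction reproduces $\pi^*\LogDR_g(\ul a)$ because $\pi$ and the blow-down maps are isomorphisms on underlying stacks---is correct and coincides with the first half of the paper's argument (the paper compresses it into the sentence that the case of pulled-back blowups ``follows immediately'' from the definition of $\LogDR$ in Holmes--Schwarz).

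However, there is a genuine gap. You establish the equality only for \emph{one} choice of the blowups, namely $\widetilde{\Mpt}_{g,n}^\st$ and $\widetilde{\Divpt}_{g,\ul a}^0$ taken to be base changes of the marked choices. The lemma concerns the class built from the blowups fixed in \ref{subsec:virtualclass}, which are an \emph{arbitrary} pair of smooth log blowups for which $\widetilde{\Divpt}_{g,\ul a}^0 \to \widetilde{\Mpt}_{g,n}^\st$ is an open immersion of smooth log smooth stacks; and in the proof of \ref{thm:drpartialcohft} the lemma is applied to the blowups produced by \ref{prop:compatiblesubdivions}, which are engineered to be compatible with the gluing maps and are \emph{not} arranged to be pullbacks of marked blowups. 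So your argument, as written, does not cover the instance of the lemma that is actually used. What is missing is the statement that the image of the Poincar\'e dual of $j_* e^! [\widetilde{\Divpt}_{g,\ul a}^0]$ in $\LogCH(\Mptst_{g,n})$ is independent of the choice of blowups: one dominates two admissible choices by a common smooth refinement and checks that the two resulting classes pull back to the same class there, using that Gysin pullbacks along lci morphisms commute with each other and with proper pushforward. This independence step is precisely what the paper's proof identifies as the remaining content after the easy pulled-back case, and it must be included for the lemma to be usable downstream.
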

\begin{proof}
If the blowups $\widetilde{\Mpt}_{g,n}^\st$ respectively $\widetilde{\Divpt}_{g,\ul a}^0$ are pullbacks of blowups of $\Mst_{g,n}$ and $\Div_{g,n,\ul a}^0$, then \Cref{lem:llogdr} follows immediately from \cite[Definition~4.4]{Holmes2021Logarithmic-int}.

In \cite[Section~3.5]{Holmes2021Logarithmic-int} it is shown that there are blowups of $\Mst_{g,n}$ and $\Div_{g,n,\ul a}^0$ that satisfy the necessary conditions, so it remains to show that the class $j_* e^! [\widetilde{\Divpt_{g, \ul a}}]$ is independent of the choice of the blowups $\widetilde{\Mpt}_{g,n}^\st$ and $\widetilde{\Divpt}_{g,\ul a}^0$. As in \cite[Section~3.5]{Holmes2021Logarithmic-int}, this follows immediately from Gysin pullbacks along lci morphisms commuting with each other and with projective pushforward. 
\end{proof}

\begin{remark}
Note that the above log blowups of $\Mptst_{g,n}$ and $\Divpt_{g,  \ul a}^0$ are \emph{not} assumed to be birational (though they can always be chosen so). For example, if $g = 0, n = 3$ one can choose $\widetilde{\Mpt}_{g,n}^\st = \P^2$. 
\end{remark}

\subsection{LogDR as a partial log CohFT}
Now we can show that the classes $\LLogDR_g(\ul a)$ form a partial log CohFT.

\begin{definition}
Let $V$ be the infinite dimensional $\Q$-vector space with basis $\{e_a : a\in \Z\}$. Let $\eta\colon V \tensor V \to \Q$ be the non-degenerate symmetric form sending $e_a \tensor e_b$ to $\delta_{a+b,0}$. Write $\ul{1} = e_0 \in V$. For every $g,n$ with $2g - 2 + n > 0$, let $\Omega_{g,n}: V^{\tensor n} \to \LogCH(\Mptst_{g,n})$ be the map that sends
\[ e_{a_1} \tensor \cdots \tensor e_{a_n}
\]
to $\LLogDR(\ul a)$.
\end{definition}

In this section we will show the following theorem.

\begin{theorem}
\label{thm:drpartialcohft}
The collection $(\Omega_{g,n})_{g,n}$ forms a partial log CohFT with unit $\ul{1} \in V$.
\end{theorem}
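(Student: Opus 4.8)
We must check the axioms of \ref{def:logcft} for a partial log CohFT with unit: $S_n$-equivariance of each $\Omega_{g,n}$, the separating gluing axiom, and the two unit axioms. (The loop axiom is not part of a \emph{partial} log CohFT, and in fact fails.) All the work is in the separating axiom.

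The $S_n$-equivariance is immediate, as $\LLogDR_g(\ul a)=\pi^*\LogDR_g(\ul a)$ and both $\pi$ and $\LogDR_g(-)$ transform in the obvious way under relabelling of the markings. The unit axiom $\Omega_{0,3}(v_1,v_2,\ul 1)=\eta(v_1,v_2)$ reduces, on the basis of $V$, to $\LLogDR_0(a,b,0)=\delta_{a+b,0}\,[\Mptst_{0,3}]$: here $\Mbar_{0,3}$ is a point with trivial log structure, so $\LogDR_0=\DR_0$; the bundle $\ca O_{\P^1}(a p_1+b p_2+0\cdot p_3)$ has degree $a+b$ and is trivial precisely when $a+b=0$ (yielding the fundamental class in $\CH^0$), while $\LLogDR_0=0$ by convention when $a+b\neq 0$; and pullback along $\pi\colon\Mptst_{0,3}\to\Mbar_{0,3}$ preserves this. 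For the forgetting-a-marking unit axiom, I would use that the forgetful map $\pi\colon\Mptst_{g,n+1}\to\Mptst_{g,n}$ of \ref{sec:forget_marking} and the classical forgetful map $\bar\pi\colon\Mbar_{g,n+1}\to\Mbar_{g,n}$ fit into a square that commutes on underlying stacks, the vertical forget-log maps being isomorphisms on underlying stacks; hence $\pi^*\LLogDR_{g,n}(\ul a)$ is the pullback to $\Mptst_{g,n+1}$ of $\bar\pi^*\LogDR_{g,n}(\ul a)$, and it suffices to recall the identity $\bar\pi^*\LogDR_{g,n}(\ul a)=\LogDR_{g,n+1}(\ul a,0)$ for the logarithmic double ramification cycle under adjoining a weight-zero marking. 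The latter follows from the construction of $\LogDR$ by resolving the Abel--Jacobi section (compare \ref{subsec:virtualclass}), since adjoining a weight-zero section pulls back the universal multidegree-zero line bundle $\ca O(\sum_i a_i p_i)$ together with its Abel--Jacobi section, and one may choose the resolving log blowups to pull back.

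For the separating axiom, fix $g=g_1+g_2$, $n=n_1+n_2$; write $\gl\colon\Mptst_{g_1,n_1+1}\times\Mptst_{g_2,n_2+1}\to\Mptst_{g,n}$ for the gluing map of \ref{sec:sewing:sep}, and set $\ul a'=(a_1,\dots,a_{n_1})$, $\ul a''=(a_{n_1+1},\dots,a_n)$ and $\Sigma=a_1+\cdots+a_{n_1}$. Since $\Omega_{g_1,n_1+1}(e_{a_1},\dots,e_{a_{n_1}},e_i)=0$ unless $i=-\Sigma$, the sum in the axiom has at most the single term $i=-\Sigma$, $j=\Sigma$; this proves the finiteness assertion and reduces the axiom (on the basis of $V$) to the \emph{splitting identity}
\[
\gl^*\LLogDR_g(\ul a)\;=\;\LLogDR_{g_1}(\ul a',-\Sigma)\boxtimes\LLogDR_{g_2}(\ul a'',\Sigma),
\]
where $(\ul a',-\Sigma)$ denotes the $(n_1{+}1)$-tuple obtained by appending $-\Sigma$, and both sides vanish when $\sum_i a_i\neq 0$. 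The plan is to pull the defining diagram \eqref{eq:drdiaglog} back along $\gl$ and identify it with the exterior product of the two corresponding diagrams. Three inputs are needed. (i) Since the glued node is separating, $\gl^*\J_{g,n}\cong\J_{g_1,n_1+1}\times\J_{g_2,n_2+1}$ and the zero section $e$ pulls back to the product of zero sections. (ii) By the universal property of pierced curves (\ref{thm:universalpropertygluing}) and \ref{lem:gluingpiecewiselinear}, a tropical divisor on the glued curve is the same datum as a pair of tropical divisors on the two sides whose outgoing slopes at the glued legs are opposite and whose values there agree; normalising each divisor by its own value at the glued leg (a translation, hence an automorphism of the ambient $\bb G_\trop$-torsor) makes the last condition automatic, so $\gl^*\Divpt_{g,\ul a}\cong\Divpt_{g_1,(\ul a',-\Sigma)}\times\Divpt_{g_2,(\ul a'',\Sigma)}$ compatibly with the Abel--Jacobi maps, and hence likewise for the multidegree-zero loci $\Divpt^0$. (iii) The smooth log blowups of $\Mptst_{g,n}$ and $\Divpt^0_{g,\ul a}$ in \ref{subsec:virtualclass} may be chosen compatibly with $\gl$ (equivalently, one passes to a common refinement of the pulled-back blowups and of products of admissible blowups of the factors); this is a statement about subdivisions of the tropical moduli cone complexes, whose product structure along the separating stratum is recorded in \ref{sec:tropical}. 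Granting (i)--(iii), the diagram \eqref{eq:drdiaglog} for $(g,\ul a)$ base-changes along $\gl$ to the exterior product of the diagrams for $(g_1,(\ul a',-\Sigma))$ and $(g_2,(\ul a'',\Sigma))$, so that $j_*e^![\widetilde{\Divpt}_{g,\ul a}^0]$ pulls back to $\bigl(j_*e^![\widetilde{\Divpt}^0_{g_1,(\ul a',-\Sigma)}]\bigr)\boxtimes\bigl(j_*e^![\widetilde{\Divpt}^0_{g_2,(\ul a'',\Sigma)}]\bigr)$; since refined Gysin pullback, proper pushforward, formation of fundamental classes and passage to the operational (Poincar\'e-dual) class are all compatible with exterior products and base change, the splitting identity follows by \ref{lem:llogdr}. (In characteristic zero one could alternatively try to deduce it from the explicit piecewise-polynomial formula of \ref{prop:LogDRinLogS} and the splitting of Pixton's formula, but the Abel--Jacobi route is characteristic-free.)

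The main obstacle is input (iii): arranging the non-canonical log-blowup choices defining $\LLogDR$ to be compatible with $\gl$ while retaining the log-smoothness needed to form the virtual class. Once that bookkeeping, and the routine but slightly delicate slope-and-torsor analysis behind (ii)---where the pierced-curve formalism of \ref{sec:sewing} is essential---are in place, the conclusion is a formal consequence of the base-change behaviour of intersection theory.
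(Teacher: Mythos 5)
Your proposal follows essentially the same route as the paper: reduce the separating axiom to the splitting identity $\gl^*\LLogDR_g(\ul a)=\LLogDR_{g_1}(\ul b_1)\boxtimes\LLogDR_{g_2}(\ul b_2)$, establish that the Jacobian and $\Divpt^0$ diagrams pull back along $\gl$ to products (your inputs (i)--(ii), the paper's \ref{lem:key_DR_square} and \ref{lem:bigdrpbsquare}), choose gluing-compatible smooth log blowups via subdivisions of the tropical moduli spaces (your input (iii), the paper's \ref{prop:compatiblesubdivions} --- which arranges, via $\Aut(\Gamma)$-invariance and extension of a subdivision from the gluing stratum, that the restriction of the blowup to that stratum is \emph{literally} a product, a point your ``common refinement'' phrasing glosses over), and conclude by compatibility of Gysin pullback and proper pushforward with exterior products. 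The only substantive difference is that you spell out the unit and $S_n$ axioms, which the paper dismisses as trivial.
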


Recall the gluing map
\[
\gl\colon \Mptst_{g_1,n_1 + 1} \times \Mptst_{g_2,n_2 + 1} \to \Mptst_{g,n}.
\]
Let $\ul{a} \in \Z^n$ be a vector summing to  $0$, and let
\[\ul b_1 = (a_1,\dots,a_{n_1}, -\sum_{i=1}^{n_1} a_i), \;\;\; \ul b_2 = (a_{n_1 + 1}, \dots, a_{n}, \sum_{i=1}^{n_1} a_i), \]
vectors of lengths $n_1 + 1$ and $n_2 + 1$ respectively, both summing to 0. We need to prove the equality
\[
\gl^* \LLogDR_{g}(\ul a) = \LLogDR_{g_1}(\ul b_1) \boxtimes \LLogDR_{g_2}(\ul b_2).
\]

To lighten notation, we write the gluing map as $\Mptst_1 \times \Mptst_2 \to \Mptst$, and  we write $\Divpt^0, \Divpt_1^0, \Divpt_2^0$ for $\Divpt_{g,a}^0, \Divpt_{g_1, b_1}^0$ and $\Divpt_{g_2,b_2}^0$ respectively, and similarly for $\Divpt(\ca O)$.

We will now construct log gluing maps for $\Divpt^0$ and $\Divpt(\ca O)$.

\begin{definition}
Define
\begin{equation}
\gl\colon \Divpt_1^0 \times \Divpt_2^0 \to \Divpt^0
\end{equation}
to be the map sending a pair 
\begin{equation}
((C_1/S, p_1, \dots, p_{n_1 + 1}, P_1, \bar\alpha_1) , (C_2, q_1, \dots, q_{n_2 + 1}, P_2, \bar\alpha_2) )
\end{equation}
to 
\begin{equation}
(\gl(C_1,C_2)/S, p_1, \dots, q_{n_2}, P_1 \otimes P_2, \bar\alpha_1 \otimes \bar\alpha_2(q_{n_2 + 1}) \vee \bar\alpha_1(p_{n_1 + 1}) \otimes \bar\alpha_2).
\end{equation}
where $\alpha_1 \otimes \alpha_2(q_{n_2 + 1}) \vee \alpha_1(p_{n_1 + 1}) \otimes \alpha_2$ denotes the gluing of functions as defined in \ref{lem:gluingpiecewiselinear}.
Similarly, define
\begin{equation}
\gl\colon \Divpt_1(\ca O) \times \Divpt_2(\ca O) \to \Divpt(\ca O)
\end{equation}
to be the map sending a pair 
\begin{equation}
((C_1/S, p_1, \dots, p_{n_1 + 1}, \ca P_1, \alpha_1) , (C_2, q_1, \dots, q_{n_2 + 1}, \ca P_2, \alpha_2) )
\end{equation}
to 
\begin{equation}
(\gl(C_1,C_2)/S, p_1, \dots, q_{n_2}, \ca P_1 \otimes \ca P_2, \alpha_1 \otimes \alpha_2(q_{n_2 + 1}) \vee \alpha_1(p_{n_1 + 1}) \otimes \alpha_2).
\end{equation}
\end{definition} 

We obtain a commutative diagram
\begin{equation}
\label{eq:bigdrdiagnonlog}
\begin{tikzcd}
& \Mfrak_1 \times \Mfrak_2 \arrow[rr] \arrow[dd, "e_1 \times e_2", near start]&  & \Mfrak \arrow[dd, "e", near start] \\
\Divpt_1(\ca O) \times \Divpt_2(\ca O) \arrow[ru]\arrow[dd] \arrow[rr, crossing over] &  & \Divpt(\ca O) \arrow[ru] &  \\
& J_1 \times J_2 \arrow[rr] & & J \\
\Divpt_1^0 \times \Divpt_2^0 \arrow[ru]\arrow[d] \arrow[rr] &  & \Divpt^0 \arrow[ru]\arrow[d] \arrow[from=uu, crossing over]&\\
\Mptst_1 \times \Mptst_2 \arrow[rr] & & \Mptst. &
\end{tikzcd}
\end{equation}
\begin{lemma}
\label{lem:bigdrpbsquare}
All the squares in \ref{eq:bigdrdiagnonlog} are pullback squares.
\end{lemma}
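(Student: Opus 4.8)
To prove \ref{lem:bigdrpbsquare} the plan is to separate the faces of the prism \ref{eq:bigdrdiagnonlog} into two that carry real geometric content and several that are then forced formally. The two ``outer walls'' of the prism are, on the right, the square \ref{eq:key_DR_square} for $(g,n)$, and on the left the product of two copies of that square (one for each factor); both are Cartesian by \ref{lem:key_DR_square} and the fact that a product of Cartesian squares is Cartesian. So I only have to treat the squares joining the two walls, and for those I would establish by hand (i) that $\J_{g,n}$ pulled back along the gluing map is $\J_1\times\J_2$, and (ii) that $\Divpt^0$ pulled back along the gluing map is $\Divpt_1^0\times\Divpt_2^0$; everything else I would get from the pasting lemma for fibre squares together with \ref{lem:key_DR_square}.

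For (i): over a geometric point the glued curve $C^\gl = C_1\cup_p C_2$ has a separating node, so its dual graph acquires no new cycle, and a line bundle of multidegree $0$ on $C^\gl$ is precisely a pair of multidegree-$0$ line bundles on $C_1$ and $C_2$ together with an identification of their fibres at $p$. Since $C_1$ and $C_2$ are proper and connected, any two such identifications differ by automorphisms of the two bundles, so the gluing datum contributes nothing and the multidegree-$0$ Picard group of $C^\gl$ is the product of those of $C_1$ and $C_2$. This identification is functorial in $S$, which gives the Cartesian square $J_1\times J_2\to\J$ over $\Mptst_1\times\Mptst_2\to\Mptst$, and, restricting along the zero section (a pair of line bundles glues to the trivial bundle iff both are trivial), the Cartesian square $\Mfrak_1\times\Mfrak_2\to J_1\times J_2$ over $\Mfrak\to\J$.

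Claim (ii) is the heart of the lemma. Working \'etale-locally on $S$ I would trivialise the tropical line torsors, so that a section of $\Divpt^0$ over the glued curve becomes a PL function $\bar\alpha$ on $C^\gl$ of slope $a_i$ at leg $i$ with $\ca O_{C^\gl}(\bar\alpha)$ of multidegree $0$. By \ref{lem:gluingpiecewiselinear} such a $\bar\alpha$ is the same datum as a pair $(\bar\alpha_1,\bar\alpha_2)$ of PL functions on $C_1$ and $C_2$ taking equal values at the two glued legs and with opposite slopes $s,-s$ there, and the $\otimes$/$\vee$ formula defining $\gl$ on $\Divpt^0$ is exactly this correspondence written for torsors. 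The irreducible components of $C^\gl$ are those of $C_1$ together with those of $C_2$, and on each of them $\ca O_{C^\gl}(\bar\alpha)$ has the same degree as $\ca O_{C_i}(\bar\alpha_i)$ (the node branch playing the role of the leg $p_{n_1+1}$, resp. $q_{n_2+1}$), so multidegree $0$ on $C^\gl$ is equivalent to multidegree $0$ on $C_1$ and on $C_2$ separately; in particular the degrees over the $C_1$-components must sum to zero, which forces $s = -\sum_{i=1}^{n_1}a_i$. Hence $(\bar\alpha_1,\bar\alpha_2)$ has slope vectors exactly $\ul b_1,\ul b_2$ and is multidegree $0$, i.e. a point of $\Divpt_1^0\times\Divpt_2^0$; conversely any point there has opposite slopes at the glued legs and so glues. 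Naturality in $S$ gives the Cartesian square $\Divpt_1^0\times\Divpt_2^0\to\Divpt^0$ over $\Mptst_1\times\Mptst_2\to\Mptst$.

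Finally I would assemble the remaining three squares. The square $\Divpt_1^0\times\Divpt_2^0\to\Divpt^0$ over $J_1\times J_2\to\J$ follows from the two Cartesian squares over $\Mptst$ by the pasting lemma, using that $\gl$ on $\Divpt^0$ is compatible with $\AJ$: indeed $\ca O_{C^\gl}(\bar\alpha)$ is obtained from $(\ca O_{C_1}(\bar\alpha_1),\ca O_{C_2}(\bar\alpha_2))$ by exactly the node gluing realising $J_1\times J_2\to\J$, the value-matching in the $\vee$ formula being what makes the gluing isomorphism well defined. The two squares involving $\Divpt(\ca O)$ then follow by pasting \ref{lem:key_DR_square} (for $(g,n)$ and for the two factors) against the squares already proved and base-changing along $e$. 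The one genuine obstacle is claim (ii): checking that ``multidegree $0$'' decomposes along the separating node and pins down the slope there, so that the fibre of $\Divpt^0$ is genuinely $\Divpt_1^0\times\Divpt_2^0$ with the twist vectors $\ul b_1,\ul b_2$ rather than some larger space; everything else is bookkeeping with pullback squares.
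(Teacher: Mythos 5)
Your proof is correct and follows essentially the same route as the paper: the paper's (very terse) argument consists precisely of the observations that triviality, multidegree~$0$, and twistability by a PL function to multidegree~$0$ are all detected after pullback along the normalisation $(C_1\sqcup C_2)^\punc\to C^\gl$ of the separating node, which is exactly the geometric content of your steps (i) and (ii), the rest being the pasting bookkeeping you describe. Your write-up usefully makes explicit two points the paper leaves implicit, namely the reduction via \ref{lem:gluingpiecewiselinear} and the fact that the multidegree-$0$ condition pins the slope at the new node to $-\sum_{i=1}^{n_1}a_i$, so that the fibre is $\Divpt_1^0\times\Divpt_2^0$ with twist vectors $\ul b_1,\ul b_2$ and not a larger disjoint union over slopes.
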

\begin{proof}
Let $C_1/S, C_2/S$ be two log $1$-pointed curves. Let $C/S$ denote the gluing of these two curves, and let $f: (C_1 \sqcup C_2)^\punc \to C$ be the natural map. Let $\Lcal/C$ be a line bundle. Then the line bundle $\Lcal$ is trivial if and only if $f^* \Lcal$ is trivial, $\Lcal$ has multidegree $0$ if and only if $f^*\Lcal$ has multidegree $0$, and $\Lcal$ is twistable by a PL function to multidegree $0$ if and only if $f^* \Lcal$ is. The lemma immediately follows.
\end{proof}

In order to prove compatibility of $\LogDR$ with pullback along the gluing maps, we now need to find blowups of $\Mptst, \Mptst_1, \Mptst_2, \Divpt^0, \Divpt_1^0, \Divpt_2^0$ that are compatible with the gluing maps.




\begin{proposition}
\label{prop:compatiblesubdivions}
There are smooth log blowups $\widetilde{\Mpt}^\st, \widetilde{\Mpt}^\st_1, \widetilde{\Mpt}^\st_2$ of respectively $\Mptst, \Mptst_1, \Mptst_2$ that contain as opens smooth log blowups of respectively $\Divpt^0, \Divpt^0_1, \Divpt^0_2$ and that satisfy the condition \[\gl^* \widetilde{\Mpt}^\st = \widetilde{\Mpt}^\st_1 \times \widetilde{\Mpt}^\st_2.\]
\end{proposition}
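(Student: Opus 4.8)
Here is the plan I would follow.

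The plan is to reduce the statement to a purely combinatorial assertion about subdivisions of cone complexes (equivalently Artin fans), and then to exploit the very simple shape of the \emph{separating} gluing map after tropicalisation. First I would set up the dictionary. Since $\Mptst$, $\Mptst_1$, $\Mptst_2$ are idealised log smooth, smooth log blowups of each correspond to smooth (simplicial, unimodular) subdivisions of their cone complexes $\cl C(\Mptst)$, $\cl C(\Mptst_1)$, $\cl C(\Mptst_2)$, and the pullback $\gl^{*}$ of a log blowup is computed by the preimage subdivision along the induced map $\gl^\trop\colon\cl C(\Mptst_1)\times\cl C(\Mptst_2)\to\cl C(\Mptst)$. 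As in \cite[\S3.5]{Holmes2021Logarithmic-int} (cf.\ the proof of \ref{lem:llogdr}), the requirement that a smooth blowup contain $\widetilde{\Divpt}^0$, resp.\ $\widetilde{\Divpt}^0_i$, as an open translates into the requirement that the chosen subdivision \emph{resolve} the tropicalisation of $\Divpt^0\to\Mptst$, resp.\ of $\Divpt^0_i\to\Mptst_i$; that \emph{some} such resolution exists for each of the three spaces separately is exactly \cite[\S3.5]{Holmes2021Logarithmic-int}. So the proposition reduces to choosing the three resolutions so that, in addition, $(\gl^\trop)^{-1}\Sigma=\Sigma_1\times\Sigma_2$ (and, with a little more bookkeeping, so that the preimage of the open $\widetilde{\Divpt}^0$ is $\widetilde{\Divpt}^0_1\times\widetilde{\Divpt}^0_2$).

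Next I would compute $\gl^\trop$ explicitly from \ref{sec:sewing}: gluing $p_{n_1+1}$ to $p_{n_2+1}$ creates a separating node $e$ whose length is the sum $\ell_{n_1+1}+\ell_{n_2+1}$ of the two glued leg lengths (\ref{lem:gluedlogcurve}), and leaves all other edge- and leg-lengths untouched. Using $\ghost_{\Mptst_{g,n}}=\ghost_{\Mbar_{g,n}}\oplus\N^{n}$ to split off, as an independent coordinate, the length of the leg being glued, I may write $\cl C(\Mptst_i)=\cl C_i'\times\R_{\ge 0}$; and the image of $\gl^\trop$ is precisely the cone complex $\cl C(D)$ of the separating boundary divisor $D\subseteq\Mptst$, which (splitting off the node length $\ell_e$) is $\cl C_1'\times\cl C_2'\times\R_{\ge 0}$. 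Under these identifications $\gl^\trop$ is the identity on $\cl C_1'\times\cl C_2'$ times the addition map $\R_{\ge 0}^2\to\R_{\ge 0}$. In particular, for subdivisions $\Theta_1$ of $\cl C_1'$ and $\Theta_2$ of $\cl C_2'$, the preimage along $\gl^\trop$ of the subdivision $(\Theta_1\times\Theta_2)\times\{\R_{\ge 0}\}$ of $\cl C(D)$ is exactly the product subdivision $(\Theta_1\times\{\R_{\ge 0}\})\times(\Theta_2\times\{\R_{\ge 0}\})$, because addition pulls the ray $\R_{\ge 0}$ back to all of $\R_{\ge 0}^2$. (A converse also holds: these product‑of‑a‑product‑with‑a‑ray subdivisions are the only ones whose preimage is a product, but I would not need this.)

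Then I would check that the double ramification data has exactly this shape. The key point is that for a \emph{separating} node the twist is rigid: the multidegree‑zero condition on each of the two sides of $e$ forces the slope of the twisting function along $e$ to equal $\mp\sum_{i\in I_1}a_i$, with no freedom. Hence the tropicalisation of $\Divpt^0\to\Mptst$ over $\cl C(D)$ is independent of the node‑length coordinate $\ell_e$, and by \ref{lem:bigdrpbsquare} (tropicalised) it is the product of the tropicalisations of $\Divpt^0_1\to\Mptst_1$ and $\Divpt^0_2\to\Mptst_2$; these two are themselves independent of the glued‑leg‑length coordinates and so descend to $\cl C_1'$ and $\cl C_2'$. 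I would then pick, by \cite[\S3.5]{Holmes2021Logarithmic-int}, smooth subdivisions $\Theta_i$ of $\cl C_i'$ resolving them; the product $\Theta_1\times\Theta_2$ is again smooth (a product of unimodular cones is unimodular) and resolves the data over $\cl C(D)$. Setting $\Sigma_i:=\Theta_i\times\{\R_{\ge 0}\}$ gives smooth subdivisions resolving $\Divpt^0_i$, and $(\Theta_1\times\Theta_2)\times\{\R_{\ge 0}\}$ is the required subdivision over $\cl C(D)$, whose preimage is $\Sigma_1\times\Sigma_2$ by the previous step. Finally I would extend this smooth subdivision of $\cl C(D)$ to a smooth subdivision $\Sigma$ of all of $\cl C(\Mptst)$ resolving $\Divpt^0$ everywhere while remaining unchanged over $\cl C(D)$; since $\gl^\trop$ factors through $\cl C(D)\subseteq\cl C(\Mptst)$, we then get $\gl^{*}\widetilde{\Mpt}^\st=\widetilde{\Mpt}^\st_1\times\widetilde{\Mpt}^\st_2$.

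The conceptual heart is the coincidence of the two structural facts above: $\gl^\trop$ mixes the two factors only through the \emph{sum} of the glued leg lengths, and --- precisely because $e$ is \emph{separating} --- the DR data never sees that sum, so a product‑compatible resolution can exist at all. (By contrast, for the loop gluing the twist along the new node is a genuine free parameter, which is exactly where the loop axiom will fail; so one should expect no analogue of this proposition there.) The only technically delicate point is the last extension step: the set of cones of $\cl C(\Mptst)$ containing the cone of $e$ is \emph{not} a subcomplex, so some care is needed to extend and further refine the subdivision globally, resolving $\Divpt^0$ away from $D$ and achieving smoothness, without disturbing the product subdivision already fixed over $\cl C(D)$. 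This is routine toroidal bookkeeping, but it is where the write‑up will require attention.
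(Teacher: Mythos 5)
Your proposal follows essentially the same route as the paper's proof: both reduce to choosing smooth subdivisions of the tropicalisations, both insist that the subdivisions $\Sigma_1,\Sigma_2$ of the two factors leave the glued-leg-length coordinate untouched (the paper phrases this as requiring them to arise by pullback from subdivisions of $\Mbar^\trop_{g_i,n_i+1}$, you phrase it as a product with an unsubdivided ray), both use that $\gl^\trop$ is the identity on all other coordinates times the addition map on the two leg lengths so that such product subdivisions pull back to products, and both finish by extending the resulting subdivision of the image stratum to all of $\Mpt^\trop$ by star subdivisions in cones away from that stratum. Your explicit description of $\gl^\trop$ and the rigidity of the slope at a separating node make concrete what the paper packages into the tropical analogue of the pullback square of \ref{lem:bigdrpbsquare}.

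The one condition you omit is the $\Aut(\Gamma)$-invariance of $\Sigma_1\times\Sigma_2$, which the paper imposes so that the product subdivision descends to the image stratum. When $n_1=n_2=0$ and $g_1=g_2$ the gluing map is $2{:}1$ onto its image and factors through the swap, so $\gl^*$ of any subdivision of $\Mptst$ is automatically swap-invariant; an asymmetric choice $\Theta_1\neq\Theta_2$ would then make the required identity $\gl^*\widetilde{\Mpt}^\st=\widetilde{\Mpt}^\st_1\times\widetilde{\Mpt}^\st_2$ unattainable. This is easily repaired (take $\Theta_1=\Theta_2$, or a common swap-invariant refinement), but it needs to be said.
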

\begin{proof}
We let $\Mpt^\trop, \Mpt^\trop_1, \Mpt^\trop_2$ denote the tropicalisations, i.e. the cone stacks, of $\Mptst_{g,n}, \Mptst_{g_1,n_1+1}, \Mptst_{g_2,n_2+1}$ respectively. Then there is a tropical gluing map $\Mpt^\trop_1 \times \Mpt^\trop_2 \to \Mpt^\trop$.

Similarly, we let $\Divpt^\trop \subset \Mpt^\trop, \Divpt^\trop_1 \subset \Mpt^{\trop}_1, \Divpt^\trop_2 \subset \Mpt^\trop_2$ denote the tropicalisations of the maps $\Divpt^0 \to \Mptst, \Divpt_1^0 \to \Mptst_1, \Divpt_2^0 \to \Mptst_2$.

Let $\Gamma$ be the decorated graph with two vertices and one edge, where the two vertices have genera $g_1$ and $g_2$, and carry $n_1$ resp. $n_2$ markings.

Now we pick two smooth subdivisions $\Sigma_1, \Sigma_2$ of $\Mpt^\trop_1, \Mpt^\trop_2$ that arise by pullback from smooth subdivisions of $\Mbar^\trop_1, \Mbar^\trop_2$ and that contain smooth subdivisions of $\Divpt^\trop_1, \Divpt^\trop_2$, and such that the corresponding subdivision $\Sigma_1 \times \Sigma_2$ of $\Mpt^\trop_1 \times \Mpt^\trop_2$ is $\Aut(\Gamma)$-invariant.

We let $\Mpt_{\Gamma}^\trop \subset \Mpt^\trop$ denote the image of $\Mpt^\trop_1 \times \Mpt^\trop_2$ under the gluing map. As the subdivision $\Sigma_1 \times \Sigma_2$ is $\Aut(\Gamma)$-invariant, it descends to a smooth subdivision $\Sigma_{\Gamma}$ of $\Mpt_{\Gamma}^\trop$. As $\Mpt_{\Gamma}^\trop$ is a maximal subcone stack of $\Mpt^\trop$, we can extend this to a subdivision $\Sigma$ of $\Mpt^\trop$ with $\Sigma|_{\Mpt_{\Gamma}^\trop} = \Sigma_{\Gamma}$.

As a tropical analogue of \ref{lem:bigdrpbsquare} we have the following pullback square of cone stacks
\begin{equation}
\begin{tikzcd}
\Divpt^\trop_1 \times \Divpt^\trop_2 \arrow[r]\arrow[d] &  \Divpt^\trop \arrow[d]\\
\Mpt^\trop_1 \times \Mpt^\trop_2 \arrow[r] & \Mpt^\trop. 
\end{tikzcd}
\end{equation}
This means $\Sigma$ already contains a smooth subdivision of $\Divpt^\trop|_{\Mpt_{\Gamma}^\trop}$. By repeated star subdivision in maximal cones of $\Sigma$ that do not lie in $\Mpt_{\Gamma}^\trop$, we obtain a smooth subdivision $\tilde{\Sigma}$ that contains a smooth subdivision of $\Divpt^\trop$ and such that $\tilde{\Sigma}|_{\Mpt_{\Gamma}^\trop} = \Sigma_{\Gamma}$.

Let $\widetilde{\Mpt}^\st, \widetilde{\Mpt}^\st_1, \widetilde{\Mpt}^\st_2$ be the log blowups obtained from the respective subdivisions $\tilde{\Sigma}, \Sigma_1, \Sigma_2$. Then by construction these satisfy all the requirements.
\end{proof}

\begin{remark}
Consider a non-degenerate small stability condition $\theta$, as defined in \cite{Kass2017The-stability-s}. It is shown in \cite[Theorem~A]{Holmes2022Logarithmic-double} that $\LogDR_g(\ul a)$ can be supported on a specific log blowup $\Mbar_{g,\ul a}^{\theta}$ depending on $\ul a$ and $\theta$. In \cite{Molcho2023SmoothCompactifications} Molcho constructs explicitly a smooth log blowup $\tilde{\ca M}_{g,\ul a}^\theta$ of $\Mbar_{g,\ul a}^{\theta}$. Choosing $\widetilde{\Mpt}^\st = \tilde{\ca M}_{g,\ul a}^\theta\times_{\Mbar_{g,n}}\Mptst_{g,n}$ in the above argument also works.
\end{remark}

\begin{proof}[Proof of \Cref{thm:drpartialcohft}]
We need to show that conditions~\oref{def:logcft:sninvariance}, \oref{def:logcft:separating}, \oref{def:logcft:unit1} and \oref{def:logcft:unit2} of \Cref{def:logcft} all hold.

The $S_n$-invariance \oref{def:logcft:sninvariance} and the unit axioms \oref{def:logcft:unit1} and \oref{def:logcft:unit2} are trivial. It remains to check the compatibility with pullback along the separating gluing map. We continue with the notation set up in this section.

We consider the smooth log blowups $\widetilde{\Mpt}^\st, \widetilde{\Mpt}^\st_1, \widetilde{\Mpt}^\st_2$ coming from \Cref{prop:compatiblesubdivions}. By combining \ref{eq:drdiaglog} and \ref{eq:bigdrdiagnonlog} we get the commutative diagram

\begin{equation}
\label{eq:bigdrdiaglog}
\begin{tikzcd}
 & \Mbar_1 \times \Mbar_2 \arrow[rr] \arrow[dd, "e_1 \times e_2", near start]&  & \Mbar \arrow[dd, "e", near start] \\
 Z_1 \times Z_2 \arrow[ru] \arrow[rr, crossing over] &  & Z \arrow[ru] &  \\
 & J_1 \times J_2 \arrow[rr] & & J \\
 \tilde{\Divpt}_1 \times \tilde{\Divpt}_2 \arrow[ru]\arrow[from=uu, crossing over]\arrow[d] \arrow[rr] &  & \tilde{\Divpt} \arrow[ru]\arrow[d] \arrow[from=uu, crossing over]&\\
 \tilde{\Mpt}^\st_1 \times \tilde{\Mpt}^\st_2 \arrow[rr, "\tilde{\gl}"', near start] & & \tilde{\Mpt}^\st  &
\end{tikzcd}
\end{equation}
where all the squares are pullback squares. We let $j_1 \times j_2$ denote the composite \[Z_1 \times Z_2 \to \tilde{\Mpt}^{\st}_1 \times \tilde{\Mpt}^{\st}_2, \]
 and $j$ the composite \[Z \to \tilde{\Mpt}^\st. \]

We now perform the following computation inside $\LogCH(\Mptst_1 \times \Mptst_2)$.
\begin{align*}
\gl^* \Omega_{g,n}(\ul a) =& \tilde{\gl}^* (\LLogDR_{g,n}(\ul a)) & \\
 =& \tilde{\gl}^* (j_* e^!([\tilde\Divpt]))^{\text{PD}} & \text{by \Cref{lem:llogdr}}\\
 =& \left(\tilde{\gl}^! j_* e^!([\tilde\Divpt])\right)^{\text{PD}} & \\
 =& \left((j_1 \times j_2)_* \tilde{\gl}^! e^! ([\tilde\Divpt])\right)^{\text{PD}} & \\
 =& \left((j_1 \times j_2)_* e^! \tilde{\gl}^! ([\tilde\Divpt])\right)^{\text{PD}} & \\
 =& \left((j_1 \times j_2)_* (e_1 \times e_2)^! \tilde{\gl}^! ([\tilde\Divpt])\right)^{\text{PD}} & \\
 =& \left((j_1 \times j_2)_* (e_1 \times e_2)^! ([\tilde\Divpt_1 \times \tilde\Divpt_2])\right)^{\text{PD}} & \\
 =& (j_{1,*} e_1^! [\tilde\Divpt_1])^{\text{PD}} \boxtimes (j_{2,*} e_2^! [\tilde\Divpt_2])^{\text{PD}} & \\
 =& \LLogDR_{g_1,n_1+1}(a_1,\dots,a_{n_1}, -\sum_{i=1}^{n_1} a_i) & \\
 & \boxtimes \LLogDR_{g_2,n_2+1}(a_{n_1 + 1}, \dots, a_{n}, \sum_{i=1}^{n_1} a_i) & \text{by \Cref{lem:llogdr}}
 \end{align*}
This was exactly what we needed to check for condition \oref{def:logcft:separating}. We conclude that $\Omega$ satisfies conditions~\oref{def:logcft:sninvariance}, \oref{def:logcft:separating}, \oref{def:logcft:unit1} and \oref{def:logcft:unit2} of \Cref{def:logcft}.
\end{proof}

\appendix

\section{Comparison with log Chow rings of Barrott}
\label{sec:logchow}

A general theory of bivariant log Chow rings is under development by Barrott, in \cite{Barrott2019Logarithmic-Cho}. In this section we show that the log Chow ring we defined in \ref{def:logchow} has a map to the log Chow ring of Barrott (under some mild conditions on the space), and thereby show that all our results still hold for the log Chow rings defined by Barrott. We first give a short summary of Barrott's definition.

If $f\colon X \to Y$ is a log blowup of log schemes with source and target locally free, Barrott defines a Gysin map $f^!\colon \CH_*(Y) \to \CH_*(X)$. For $X$ a log scheme, he then defines the log Chow group of $X$ as 
\begin{equation}
\CH^\dagger_*(X) = \colim_{\tilde X \to X} \CH_*(\tilde X)
\end{equation}
where the colimit is taken over locally free log blowups of $X$, with transition maps given by the Gysin pullback. For $X \to Y$ a map of log schemes, he proposes definitions of various operations, including: 
\begin{enumerate}
\item 
if $f\colon X \to Y$ is proper, a pushforward map $f_*\colon \CH^\dagger_*(X) \to \CH^\dagger_*(Y)$; 
\item
if $f\colon X \to Y$ is log flat, a pullback map $f^*\colon \CH^\dagger_*(Y) \to \CH^\dagger_*(X)$; 
\item 
if $f\colon X \to Y$ is a strict regular embedding, a Gysin pullback map $f^!\colon \CH^\dagger_*(Y) \to \CH^\dagger_*(X)$; 
\item 
if $f\colon X \to Y$ is a log blowup with source and target locally free, a Gysin pullback map $f^!\colon \CH^\dagger_*(Y) \to \CH^\dagger_*(X)$. 
\end{enumerate}

Barrott then defines the \emph{log Chow cohomology} ring $\CH_\dagger^*(X)$ of a log scheme $X$; an object $x \in \CH_\dagger^*(X)$ consists of the data of, for every log scheme $T \to X$, a morphism $z_T\colon \CH^\dagger_*(T) \to \CH^\dagger_*(T)$, and these maps $z_T$ should commute with saturated proper pushforward, log flat pullback, and strict Gysin pullback. 

\begin{lemma}\label{lem:op_compatibilities}
Let $X$ be a log scheme locally of finite type over $k$. Let $z \in \CHop(X)$ and let $t\colon T \to X$ be a morphism of log schemes. Then 
\begin{enumerate}
\item The morphisms $z_{\tilde T}\colon \CH_*(\tilde T)  \to \CH_*(\tilde T) $ for $\tilde T \to T$ log blowups assemble into a group homomorphism 
\begin{equation}
z^\dagger_T\colon \CH^\dagger_*(T) \to \CH^\dagger_*(T). 
\end{equation}
\item The data of the maps $z_T^\dagger$ for varying $T$ from (1) commute with saturated proper pushforward, log flat pullback, and strict Gysin pullback, and hence define an element $z^\dagger \in \CH_\dagger^*(X)$. 
\end{enumerate}
\end{lemma}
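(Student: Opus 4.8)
The plan is to reduce both parts to the defining compatibilities of an element of $\CHop(X)$ — compatibility with proper pushforward, flat pullback, and refined Gysin pullback along lci morphisms, all for morphisms over $X$ — by passing in each case to a compatible pair of log blowups on which Barrott's operations on $\CH^\dagger_*$ are computed by the classical operations on Chow groups of schemes.

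For part (1), the transition maps in $\CH^\dagger_*(T) = \colim_{\tilde T \to T} \CH_*(\tilde T)$ are the Gysin maps $f^!$ along log blowups $f\colon \tilde T_1 \to \tilde T_2$, so the content is to check that $z_{\tilde T_1} \circ f^! = f^! \circ z_{\tilde T_2}$ for every such $f$; granting this, additivity of the $z_{\tilde T}$ gives the homomorphism $z^\dagger_T$ on the colimit. To verify the commutation I would work étale locally on $\tilde T_2$, where $f$ is the base change of a toric modification $g\colon Y \to \A^N$ along a strict chart $c\colon \tilde T_2 \to \A^N$, and Barrott's $f^!$ is computed as the pullback along $c$ of the refined Gysin class of $g$. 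Operational classes commute with flat pullback and refined Gysin pullback along lci morphisms lying over the base, so the only thing to take care of is that $c$ — hence $g$ — is not a morphism over $X$; this is dealt with using the fact, part of Barrott's construction, that $f^!$ is independent of the auxiliary chart $c$ and thus depends only on the morphism $f$, which is a datum over $X$.

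For part (2), I would fix a morphism $h\colon T' \to T$ over $X$, and for each log blowup $\tilde T \to T$ set $\tilde T' = T' \times_T \tilde T$ — a log blowup of $T'$, since log blowups are stable under base change. If $h$ is saturated proper, then $\tilde T' \to \tilde T$ is proper on underlying schemes and over $X$, Barrott's pushforward $h_*$ is computed in the colimit by the proper pushforwards $(\tilde T' \to \tilde T)_*$, and $z$ commutes with proper pushforward over $X$. If $h$ is log flat, then after a further log blowup of $T$ the map $\tilde T' \to \tilde T$ becomes flat on underlying schemes, $h^*$ is computed by these flat pullbacks, and $z$ commutes with flat pullback. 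If $h$ is a strict regular embedding, then $\tilde T' \to \tilde T$ is the base change of the lci morphism $h$ over $X$, Barrott's $h^!$ is the associated refined Gysin pullback, and $z$ commutes with it. In each case compatibility with the transition maps of the colimit follows by applying the relevant commutation to towers of log blowups, exactly as in part (1); hence $z^\dagger_T$ commutes with the operation, and $z^\dagger = (z^\dagger_T)_T$ is an element of $\CH_\dagger^*(X)$.

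The hard part is the input used throughout: that Barrott's operations on $\CH^\dagger_*$, and especially the Gysin map $f^!$ along a locally free log blowup, are — after reduction to compatible models — computed by the classical operations on schemes, and that $f^!$ is intrinsic to $f$ (the chart-independent pullback of the refined Gysin class of a toric modification), so that operational classes commute with it even though $f$ need be neither lci, nor flat, nor a regular embedding on underlying schemes, and even though the auxiliary affine toric variety does not lie over $X$. This is exactly the content of the ``mild assumptions'' under which we claim our results transfer to \cite{Barrott2019Logarithmic-Cho}; granting it, the proof is the bookkeeping above.
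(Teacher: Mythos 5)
Your reduction of both parts to the commutation of $z$ with Barrott's Gysin pullback along a log blowup $f\colon \tilde T_1 \to \tilde T_2$ is the same reduction the paper makes; part (2) in particular is handled essentially identically (strict Gysin pullback is immediate from the operational axioms, and pushforward and log flat pullback unravel into scheme-level operations plus the Gysin map along log blowups). The gap is in how you verify that key commutation. You propose to check the identity $z_{\tilde T_1}\circ f^! = f^!\circ z_{\tilde T_2}$ \'etale-locally on $\tilde T_2$, using a strict chart $c\colon \tilde T_2 \to \A^N$ and a toric modification $g\colon Y \to \A^N$ inducing $f$. But an equality of homomorphisms $\CH_*(\tilde T_2)\to\CH_*(\tilde T_1)$ cannot be verified on an \'etale cover: Chow groups do not satisfy \'etale descent in any form that lets you glue such an identity, and a cycle on $\tilde T_2$ need not be supported compatibly with any single chart. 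Chart-independence of $f^!$ does not repair this --- it says the local descriptions agree where they overlap, not that the global identity follows from the local ones. You explicitly defer the remaining work to ``mild assumptions'' to be granted, but this globalization is precisely the mathematical content of the lemma rather than a side condition.

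The paper closes exactly this gap by replacing the local charts with a single global object: the Artin fans give a cartesian square with strict horizontal arrows $\tilde T \to \mathcal{A}_{\tilde T}$, $T \to \mathcal{A}_T$ and with $\Pi\colon \mathcal{A}_{\tilde T}\to\mathcal{A}_T$ a representable lci morphism of smooth log smooth stacks, and Barrott's $f^!$ is identified with Kresch's refined Gysin pullback $\Pi^!$. One then needs an additional observation that is absent from your argument precisely because your charts are schemes: an operational class on $X$ a priori commutes only with Gysin pullbacks along lci morphisms of \emph{schemes}, so one must check separately that it commutes with Kresch's Gysin pullback along a regular immersion of \emph{stacks}; this holds because Kresch's construction is a composite of proper pushforwards, flat pullbacks and scheme-level Gysin maps (the paper's lemma on stacky Gysin compatibility). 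If you want to salvage your approach, the fix is to make your ``chart'' global, which is exactly what the Artin fan accomplishes.
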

\begin{proof}
The first claim is \ref{lem:key_Gysin_compatibility} below. For the second claim, commutation with strict Gysin pullback is immediate from \cite[Def 17.1, C1]{Fulton1984Intersection-th}. Commutation with log flat pullback and saturated proper pushforward are more involved. After unravelling Barrott's definitions, all the constructions are composed of proper pushforward, flat pullback, and Barrott's Gysin pullback. Compatibility with the former two operations holds by definition of the operational class \cite[Def 17.1]{Fulton1984Intersection-th}, so it remains to check compatibility with Barrott's Gysin pullback, which is \ref{lem:key_Gysin_compatibility}.
\end{proof}
\begin{lemma}\label{lem:key_Gysin_compatibility}
Let $t \colon T \to X$ be a morphism from a locally free finite-type log scheme, and let $\pi\colon \tilde T \to T$ be a log blowup with $
\tilde T$ also locally free. Let $z \in \CHop(X)$. Then the diagram 
\begin{equation}
 \begin{tikzcd}
  \CH_*(T) \arrow[d, "z_T"] \arrow[r, "\pi^!"] & \CH_*(\tilde T) \arrow[d, "z_{\tilde T}"]\\
    \CH_*(T)  \arrow[r, "\pi^!"] & \CH_*(\tilde T) 
\end{tikzcd}
\end{equation}
commutes, where $\pi^!$ is Barrott's Gysin pullback. 
\end{lemma}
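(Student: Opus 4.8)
The plan is to reduce Barrott's Gysin pullback $\pi^!$ along a locally free log blowup to a composition of the three operations with which an operational class is required to commute, namely proper pushforward, flat pullback, and strict (ordinary) Gysin pullback. First I would recall Barrott's construction of $\pi^!$: for a log blowup $\pi\colon \tilde T\to T$ with $T$, $\tilde T$ locally free, the map $\pi^!\colon \CH_*(T)\to\CH_*(\tilde T)$ is built strictly-locally on $T$, so I may assume $T$ is atomic (or at least that it admits a global chart by a toric monoid), and in that situation $\pi$ is pulled back from a toric blowup $\tilde A\to A$ of the Artin fan, i.e. $\tilde T = T\times_A \tilde A$. One then has the fibre diagram with the strict vertical maps $T\to A$ and $\tilde T\to\tilde A$, and Barrott's $\pi^!$ is defined so as to be the refined Gysin pullback associated to this square — concretely, after further stratifying, $\tilde A\to A$ is an iterated sequence of blowups in smooth (toric) centres, and $\pi^!$ is the corresponding iterated composite of ordinary refined Gysin maps $\iota^!$ for the regular embeddings of exceptional divisors together with flat (in fact open/birational) pullbacks on the complements.

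With $\pi^!$ so decomposed, the proof becomes bookkeeping. First I would handle a single blowup in a smooth centre $Z\subset A$: here $\tilde A = \Bl_Z A$, pulled back to $\tilde T = \Bl_{Z_T} T$ where $Z_T = T\times_A Z$ is a strict regular embedding in $T$ (strictness is exactly why local freeness is needed — it keeps the centre's normal cone flat). The refined Gysin map $\pi^!$ then fits into the deformation-to-the-normal-cone picture, and $z_T$, being an operational class, commutes with the regular embedding $Z_T\hookrightarrow T$ by \cite[Def 17.1, C1]{Fulton1984Intersection-th}, commutes with the flat pullback $T\setminus Z_T\hookrightarrow T$ tautologically, and commutes with the proper pushforward from the exceptional divisor; assembling these via the excess-intersection / blowup formula (e.g. \cite[Theorem 6.7]{Fulton1984Intersection-th}) shows $z_{\tilde T}\circ\pi^! = \pi^!\circ z_T$ in this case. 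Then I would iterate: a general locally free log blowup is dominated by one obtained as a tower of smooth-centre blowups, the transition maps in the colimit defining $\CH^\dagger_*$ are exactly these $\pi^!$, and compatibility is preserved under composition, so the general case follows. Finally I would note that none of this sees the log structure beyond the combinatorial data of the Artin fan, so passing from the local atomic model to global $T$ is harmless: the strict étale descent for $\CHOP$-classes and the fact that $\pi^!$ is constructed locally let the local identities glue.

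The main obstacle I expect is matching my decomposition of $\pi^!$ with Barrott's actual definition: Barrott may define $\pi^!$ not by an explicit tower of smooth blowups but abstractly (e.g. via a deformation space attached to the log blowup, or via a logarithmically-refined intersection with the exceptional locus), and one must check that his definition does agree, on locally free targets, with the iterated-ordinary-Gysin description — equivalently, that any two factorisations of $\tilde A\to A$ into smooth blowups give the same answer. This is essentially the statement that Barrott's $\CH^\dagger_*$ is well-defined as a colimit, which one may assume from \cite{Barrott2019Logarithmic-Cho}; granting that, the compatibility with operational classes reduces cleanly to the classical blowup formula and the defining compatibilities C1 of \cite[Def 17.1]{Fulton1984Intersection-th}, and there is no further serious difficulty.
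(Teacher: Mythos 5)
Your overall strategy --- reduce Barrott's $\pi^!$ to operations that an operational class commutes with by definition --- is the right instinct, but two of your steps do not go through as written. The first and most serious is the local-to-global step at the end. You reduce to the case where $T$ admits a global chart, so that $\pi$ is pulled back along a strict map from a toric (scheme) model, and then claim that ``strict \'etale descent for $\CHOP$-classes and the fact that $\pi^!$ is constructed locally let the local identities glue.'' But the statement to be proved is an equality of two group homomorphisms $\CH_*(T)\to\CH_*(\tilde T)$, and Chow groups do not satisfy \'etale (or even Zariski) descent in any form that lets you verify such an identity on a cover: a cycle on $T$ need not be a sum of pushforwards of cycles supported on chart-admitting opens in a way compatible with both sides of the diagram. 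This is precisely the point where the paper's proof takes a different route: it observes that a cartesian square over a smooth log smooth \emph{scheme} model need not exist globally, and instead uses the single global cartesian square over the Artin fans $\ca A_T$ and $\ca A_{\tilde T}$. There $\Pi\colon \ca A_{\tilde T}\to\ca A_T$ is a representable l.c.i.\ morphism of smooth log smooth algebraic \emph{stacks}, Barrott's $\pi^!$ is identified with Kresch's refined Gysin pullback $\Pi^!$, and the only remaining issue is that operational classes are a priori only required to commute with l.c.i.\ Gysin maps of schemes; the paper isolates this as a separate lemma (commutation with Kresch's Gysin pullback along regular closed immersions of stacks, proved by unwinding Kresch's construction into proper pushforwards, flat pullbacks and scheme-level Gysin maps). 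Your proposal never confronts the stackiness of the Artin fan because you try to stay with scheme charts, and that is exactly what creates the descent gap.

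The second problem is your decomposition of $\tilde{\ca A}\to\ca A$ into a tower of blowups in smooth centres. For a general smooth subdivision of a smooth fan this factorisation is not available: the assertion that every smooth refinement is an iterated star subdivision is (a strong form of) Oda's conjecture and is not known, so you cannot assume $\pi$ itself is such a tower. Your fallback --- dominate $\pi$ by a tower $\pi\circ\rho$ and descend --- requires the identity $\pi^!=\rho_*(\pi\circ\rho)^!$ for Barrott's pullbacks, which is an extra compatibility you would have to extract from \cite{Barrott2019Logarithmic-Cho} and is not obviously weaker than the statement you are trying to prove. Both difficulties evaporate if you follow the paper and treat $\Pi$ as a single l.c.i.\ morphism (no factorisation into smooth blowups needed, since Kresch's refined Gysin pullback is defined for any representable l.c.i.\ map), at the price of having to prove the auxiliary lemma about operational classes and stacky regular embeddings. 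I would encourage you to restructure your argument around that global l.c.i.\ square rather than around local toric models and the blowup formula.
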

\begin{proof}
Suppose first that we can find a cartesian square 
\begin{equation}
 \begin{tikzcd}
  \tilde T \arrow[r]\arrow[d, "\pi"] & \tilde {\ca T} \arrow[d, "\Pi"]\\
T \arrow[r] & \ca T 
\end{tikzcd}
\end{equation}
with horizontal arrows strict, and $\Pi$ a log blowup between smooth log smooth log schemes (in particular, $\Pi$ is l.c.i.). Then Barrott's $\pi^!$ is just the Gysin pullback induced by $\Pi$, and the result is immediate from the commutation of $z$ with smooth pullbacks and with Gysin pullback along regular embeddings. 

In general such a cartesian square need not (we suppose) exist. However, it's role can be played by the Artin fans $\ca A_T$ and  $\ca A_{\tilde T}$ of $T$ and $\tilde T$. We have a cartesian diagram 
\begin{equation}
 \begin{tikzcd}
  \tilde T \arrow[r]\arrow[d, "\pi"] & \ca A_{\tilde T} \arrow[d, "\Pi"]\\
T \arrow[r] & \ca A_T 
\end{tikzcd}
\end{equation}
where $\Pi$ is a representable l.c.i. map between smooth log smooth log algebraic stacks. Following \cite[\S 3.1]{Kresch1999Cycle-groups-fo} we have a refined Gysin pullback $\Pi^!$, and this coincides with Barrott's pullback. 

Now, the operational class $z$ commutes (by definition) with Gysin pullbacks for lci morphisms of schemes, but not a-priori lci morphisms of stacks. However, this is in fact automatic. For smooth morphisms this is clear, so it is enough to check compatibility with Gysin pullbacks along regular closed immersions of stacks. The precise statement is \ref{lem:stacky_Gysin_comp}. 
%
%
\end{proof}

\begin{lemma}\label{lem:stacky_Gysin_comp}
Let $X$ be a scheme of finite type over $k$, and let $z \in \CHop(X)$ be an operational class. Let $f\colon S' \to S$ be a regular closed immersion of algebraic stacks of finite type over $k$. Let $T$ be a finite-type $k$ scheme with maps to $X$ and to $G$. Consider the fibre diagram 
\begin{equation}
 \begin{tikzcd}
T' \arrow[r]\ar[d] & S' \ar[d, "f"]\\
T \ar[r] \ar[d]& S\\
X. 
\end{tikzcd}
\end{equation}
Then the diagram 
\begin{equation}
 \begin{tikzcd}
  \CH_*(T) \arrow[d, "z_T"] \arrow[r, "f^!"] & \CH_*(T') \arrow[d, "z_{T'}"]\\
    \CH_*(T)  \arrow[r, "f^!"] & \CH_*(T') 
\end{tikzcd}
\end{equation}
commutes, where $f^!$ is Kresch's Gysin pullback. 
\end{lemma}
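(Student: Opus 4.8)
The plan is to reduce the statement entirely to operations on \emph{schemes}, where the defining compatibilities of $\CHop$ apply on the nose. First I would note that, since $f$ is a closed immersion, it is affine and in particular representable, so $T' = T\times_S S'$ is again a finite-type $k$-scheme and $T'\hookrightarrow T$ is a closed immersion; composing $T'\to T\to X$ lets $z$ act on $\CH_*(T')$, and likewise on the Chow groups of all the auxiliary schemes appearing below, each of which carries a canonical map to $T$ and hence to $X$. The point to keep in mind is that $T'\hookrightarrow T$ need \emph{not} itself be a regular immersion, so one cannot simply invoke compatibility of $z$ with a Gysin pullback along $T'\hookrightarrow T$; the \emph{refined} Gysin homomorphism built from the normal bundle of $f$ is what is needed.

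Next I would unwind Kresch's construction of $f^!$ (\cite[\S 3.1]{Kresch1999Cycle-groups-fo}) in the case where the test object $T$ is a scheme. Writing $g\colon T'\to S'$ for the induced map, $N = g^*N_{S'/S}$ for the pullback to $T'$ of the normal bundle of $f$ (a rank-$d$ vector bundle on the scheme $T'$, $d=\operatorname{codim} f$), $C = C_{T'/T}T\hookrightarrow N$ for the normal cone viewed as a closed subcone of $N$, and $M = M^{\circ}_{T'/T}T$ for the deformation-to-the-normal-cone space (a scheme flat over $\mathbb{P}^1$, with $M\setminus M_\infty\cong T\times\mathbb{A}^1$ and $M_\infty=C$), the map $f^!\colon \CH_k(T)\to\CH_{k-d}(T')$ is the composite of the specialisation $\sigma\colon \CH_*(T)\to\CH_*(C)$, the proper pushforward $\iota_*\colon\CH_*(C)\to\CH_*(N)$, and the inverse $0_N^!$ of the flat-pullback isomorphism $\CH_{*-d}(T')\xrightarrow{\sim}\CH_*(N)$ along $N\to T'$ (equivalently, the Gysin pullback along the zero section, a regular closed immersion of \emph{schemes} of codimension $d$). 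Moreover $\sigma$ is itself assembled from flat pullback to the open $T\times\mathbb{A}^1\subseteq M$, extension of the resulting cycle to $M$, and Gysin pullback along the principal Cartier divisor $M_\infty=C\hookrightarrow M$ (again a regular closed immersion of schemes, of codimension $1$). For $T$ a scheme this is the refined Gysin homomorphism of \cite[Ch.~6]{Fulton1984Intersection-th}.

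Finally I would check, step by step, that $z$ commutes with each ingredient: with proper pushforward and flat pullback by the definition of $\CHop$, with $0_N^!$ because it is the inverse of a flat pullback (hence commutes with $z$ by the same token), and with the Gysin pullback along $M_\infty\hookrightarrow M$ because that is a regular closed immersion, a fortiori lci, of schemes, so the refined-Gysin axiom for $\CHop$ (alternatively \cite[Def.~17.1]{Fulton1984Intersection-th}) applies. The one point needing a little care — and the place I expect the only (mild) obstacle — is the specialisation map $\sigma$, since it involves the non-canonical choice of an extension $\tilde\alpha\in\CH_*(M)$ of the flat pullback of $\alpha$. Here the observation is that $z$ commutes with flat pullback and with restriction to the open $T\times\mathbb{A}^1$, so $z_M(\tilde\alpha)$ is again a valid extension of the flat pullback of $z_T(\alpha)$; since $\sigma$ is independent of the chosen extension, $z_C(\sigma(\alpha))=z_C(i_\infty^!\tilde\alpha)=i_\infty^!(z_M\tilde\alpha)=\sigma(z_T(\alpha))$. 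Assembling these compatibilities yields $z_{T'}\circ f^! = f^!\circ z_T$. If preferred, this last paragraph can be compressed into an appeal to the general principle that a bivariant class automatically commutes with every operation built from proper pushforward, flat pullback, and Gysin pullback along regular embeddings (\cite[Ch.~17]{Fulton1984Intersection-th}), which is precisely what Kresch's refined Gysin pullback along a regular embedding amounts to once the test object is a scheme.
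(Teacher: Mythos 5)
Your proof is correct and follows essentially the same route as the paper's: the paper simply observes that Kresch's construction of $f^!$ decomposes into proper pushforwards, flat pullbacks, and Gysin pullbacks along maps of schemes, each of which commutes with the operational class $z$ by definition. You carry out the same reduction, just more explicitly — your careful treatment of the non-canonical extension step in the specialisation map is a point the paper leaves implicit, and it is handled correctly.
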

\begin{proof}
The `stacky Gysin pullback' $f^!$ is constructed in \cite[\S 3.1]{Kresch1999Cycle-groups-fo} as a composite of several maps. In our situation each step in the construction is either a proper pushforward, flat pullback, or Gysin pullback along a map of schemes. In particular, these all commute with the action of $z$. 
\end{proof}
Having finished proving \ref{lem:op_compatibilities}, we collect some consequences. First, \ref{lem:op_compatibilities} yields a ring homomorphism 
\begin{equation}\label{lem:cohom_to_barr}
\CHop(X) \to \CH_\dagger^*(X). 
\end{equation}

\begin{lemma}
If $\pi\colon \tilde X \to X$ is a log blowup, there is a canonical isomorphism $\CH_\dagger^*(\tilde X) = \CH_\dagger^*(X)$, and the diagram 
\begin{equation}
 \begin{tikzcd}
  \CHOP(X) \arrow[r] \arrow[d]&  \CHop(X) \arrow[r] \arrow[d]& \CH_\dagger^*( X)\arrow[d] \\
    \CHOP(\tilde  X) \arrow[r] &    \CHop(\tilde  X) \arrow[r] & \CH_\dagger^*(\tilde X)
\end{tikzcd}
\end{equation}
commutes, where the right horizontal arrows are from \ref{lem:cohom_to_barr}, and the middle and left vertical arrows are the pullback of Chow cohomology. 
\end{lemma}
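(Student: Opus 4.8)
The main content is the first assertion: the existence of a canonical isomorphism $\pi^*\colon \CH_\dagger^*(X) \xrightarrow{\sim} \CH_\dagger^*(\tilde X)$. Once this is in place, the commutativity of the diagram is almost formal. The plan is to exhibit $\pi^*$ together with an explicit inverse.

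First I would recall the basic invariance of log Chow \emph{groups}: for a locally free log scheme $T$ and a log blowup $\rho\colon \tilde T \to T$ with $\tilde T$ locally free, Barrott's Gysin pullback induces a canonical isomorphism $\iota_\rho\colon \CH^\dagger_*(T) \xrightarrow{\sim} \CH^\dagger_*(\tilde T)$. This holds because log blowups of $\tilde T$ are cofinal among log blowups of $T$ — given $S\to T$ one passes to $S\times_T\tilde T \to \tilde T$ — and the transition maps in the two defining colimits match under $\rho^!$; I would also record that the $\iota_\rho$ compose and are compatible with arbitrary base change, all of which should follow from the expected formal properties of Barrott's constructions. Consequently, for any $T\to X$ the base change $\tilde T := T\times_X\tilde X \to T$ is a log blowup (being the base change of $\pi$), so we obtain $\iota\colon \CH^\dagger_*(T) \xrightarrow{\sim}\CH^\dagger_*(\tilde T)$ together with the structure map $\tilde T \to \tilde X$.

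Next I would construct $\pi^*$ and an inverse $\Psi$. The map $\pi^*$ is just restriction: for $z\in\CH_\dagger^*(X)$ and $T\to\tilde X$, set $(\pi^* z)_T := z_T$ via $T\to\tilde X\to X$, the required compatibilities being inherited. For $\Psi$, given $w\in\CH_\dagger^*(\tilde X)$ and $T\to X$, choose a log blowup $\rho\colon S\to T$ admitting a lift $\ell\colon S\to\tilde X$ of $S\to T\to X$ — for instance $S = T\times_X\tilde X$ with $\rho = \mathrm{pr}_1$ and $\ell = \mathrm{pr}_2$ — and set $(\Psi w)_T := \iota_\rho\circ w_S\circ\iota_\rho^{-1}$, where $w_S$ uses $\ell$. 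The two things to check are that $(\Psi w)_T$ is independent of the choice of $(S,\rho,\ell)$, and that it is compatible with saturated proper pushforward, log flat pullback and strict Gysin pullback. For independence, given two choices one forms the common refinement $S_{12}=S_1\times_T S_2$, again a log blowup of $T$ carrying a compatible lift to $\tilde X$; the morphisms $S_{12}\to S_i$ are log blowups, hence log flat, and compatibility of $w$ with log flat pullback — one of the three defining properties of a class in $\CH_\dagger^*$ — together with the identification of $\iota$ with the relevant Gysin/log flat pullback forces the two expressions to agree. Compatibility with the three operations along $T'\to T$ follows by base changing the chosen datum and invoking the corresponding compatibilities of $w$ with the base-change compatibility of $\iota$.

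Finally I would verify that $\Psi$ and $\pi^*$ are mutually inverse, and then the diagram. For $\pi^*\circ\Psi = \mathrm{id}$: if $T\to\tilde X$ then $(S,\rho,\ell)=(T,\mathrm{id},T\to\tilde X)$ is an admissible choice, so $(\Psi w)_T = w_T$, hence $(\pi^*\Psi w)_T = w_T$. For $\Psi\circ\pi^* = \mathrm{id}$: for $T\to X$ one computes $(\Psi\pi^* z)_T$ via $S = T\times_X\tilde X$ as $\iota_\rho\circ z_S\circ\iota_\rho^{-1}$, which equals $z_T$ by compatibility of $z$ with the log flat pullback $\rho$. As for the diagram, the right-hand square commutes essentially by definition of the map $\CHop(X)\to\CH_\dagger^*(X)$ of \ref{lem:op_compatibilities}: for $z\in\CHOP(X)$ (resp.\ $\CHop(X)$) and $T\to\tilde X$, both $(\pi^* z^\dagger)_T$ and $((\pi^* z)^\dagger)_T$ are the endomorphism of $\CH^\dagger_*(T)$ that $z$ induces via $T\to\tilde X\xrightarrow{\pi} X$; the left-hand square commutes by functoriality of pullback of Chow cohomology along $\pi$ and of the restriction maps $\CHOP\to\CHop$. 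The main obstacle I anticipate lies entirely in the first step: since Barrott's theory is still under development, one must check that for a log blowup the Gysin pullback used as transition map in $\CH^\dagger_*$ agrees with the log flat pullback with which $\CH_\dagger^*$-classes commute, and that the resulting canonical isomorphism is natural in the log blowup and stable under base change — granting these (which are among the properties Barrott's constructions are designed to have), the remainder is bookkeeping.
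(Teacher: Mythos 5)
Your argument is correct and is essentially an expanded version of the paper's proof, which simply declares the lemma ``immediate from the definition of the pullback on Chow cohomology and the construction of the map'' --- you supply the details (cofinality of log blowups, the explicit inverse $\Psi$, the checks of mutual inverseness) that the paper leaves implicit, resting on the same expected formal properties of Barrott's under-development theory that the paper also takes for granted. One cosmetic point: with your convention $\iota_\rho\colon \CH^\dagger_*(T)\to\CH^\dagger_*(S)$, the conjugation should read $(\Psi w)_T=\iota_\rho^{-1}\circ w_S\circ\iota_\rho$ rather than the order you wrote.
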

\begin{proof}
Immediate from the definition of the pullback on Chow cohomology and the construction of the map \ref{lem:cohom_to_barr}. 
\end{proof}

In this way we build the promised ring homomorphism
\begin{equation}
\label{eq:maptobarrot}
\LogCH(X) \to \CH^*_\dagger(X). 
\end{equation}

The following lemma on compatibility of pullbacks follows immediately.
\begin{lemma}
\label{lem:pullbackcommutebarrot}
Let $f\colon X \to Y$ be a map of dominable log stacks. Then there is a commutative square
\[
 \begin{tikzcd}
  \LogCH(Y) \arrow[r, "f^*"] \arrow[d]& \LogCH(X)\arrow[d] \\
    \CH^*_\dagger(Y) \arrow[r, "f^*"] & \CH^*_\dagger(X)
\end{tikzcd}
\]
\end{lemma}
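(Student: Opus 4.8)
The plan is to trace the pullback through both sides and reduce to a single naturality square over a fixed log modification. Recall the ingredients. Given $z\in\LogCH(Y)$, choose as in \ref{subsubsec:logpullback} a log modification $p\colon\tilde Y\to Y$ by a finite-type algebraic stack with log structure, together with a lift $\tilde z\in\CHOP(\tilde Y)$ of $z$, and form the cartesian square with $\tilde X=X\times_{Y}\tilde Y$, projections $q\colon\tilde X\to X$ and $\tilde f\colon\tilde X\to\tilde Y$. Since log modifications are stable under base change, $q$ is again a log modification and $\tilde X$ is admissible in the colimit defining $\LogCH(X)$, so $f^{*}z$ is by definition the class of $\tilde f^{*}\tilde z\in\CHOP(\tilde X)$, where $\tilde f^{*}$ is the operational (restriction) pullback. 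On the Barrott side, $f^{*}\colon\CH^*_\dagger(Y)\to\CH^*_\dagger(X)$ is the evident restriction pullback: an element of $\CH^*_\dagger(Y)$ is a compatible family of operators on $\CH^\dagger_*(T)$ indexed by log schemes $T\to Y$, and precomposition with $f$ turns it into such a family indexed over $X$, preserving compatibility with saturated proper pushforward, log flat pullback and strict Gysin pullback. Finally, by the construction of \ref{eq:maptobarrot} out of \ref{lem:op_compatibilities}, the comparison map $\LogCH(W)\to\CH^*_\dagger(W)$ sends the class of $w\in\CHOP(\tilde W)$ to $w^{\dagger}\in\CH^*_\dagger(\tilde W)$ transported along the canonical identification $\CH^*_\dagger(\tilde W)=\CH^*_\dagger(W)$ of the lemma just above \ref{eq:maptobarrot}; that identification is precisely restriction pullback along the log blowup $\tilde W\to W$, hence in particular injective.

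The verification is then a diagram chase resting on two formal inputs: (i) restriction pullback is functorial, $(g\circ h)^{*}=h^{*}\circ g^{*}$, both on $\CHOP$ and on $\CH^*_\dagger$; and (ii) the comparison map $w\mapsto w^{\dagger}$ from $\CHOP$ (equivalently $\CHop$) to $\CH^*_\dagger$ commutes with restriction pullback along arbitrary morphisms, which is immediate from \ref{lem:key_Gysin_compatibility}, since $w^{\dagger}$ is by construction $w$ acting through its own operational structure and restriction merely re-indexes the test objects. Writing $\zeta\in\CH^*_\dagger(Y)$ for the image of $z$, so that $p^{*}\zeta=\tilde z^{\dagger}$ in $\CH^*_\dagger(\tilde Y)$, and using $f\circ q=p\circ\tilde f$, one computes in $\CH^*_\dagger(\tilde X)$ that
\[
q^{*}(f^{*}\zeta)=(f\circ q)^{*}\zeta=(p\circ\tilde f)^{*}\zeta=\tilde f^{*}(p^{*}\zeta)=\tilde f^{*}(\tilde z^{\dagger})=(\tilde f^{*}\tilde z)^{\dagger}.
\]
On the other hand $(\tilde f^{*}\tilde z)^{\dagger}$ is, by construction, $q^{*}$ applied to the image of $f^{*}z$ in $\CH^*_\dagger(X)$. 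Since $q^{*}$ is injective, $f^{*}\zeta$ equals the image of $f^{*}z$, which is exactly the claimed commutativity.

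I expect the only real difficulty to be bookkeeping around Barrott's still-developing framework: one must confirm that restriction pullback along an arbitrary morphism is genuinely available in $\CH^*_\dagger$ with the functoriality in (i), and that the isomorphism $\CH^*_\dagger(\tilde W)=\CH^*_\dagger(W)$ for a log blowup is compatible, as restriction, both with this pullback and with $w\mapsto w^{\dagger}$. These are formal consequences of $\CH^*_\dagger$ being a bivariant-type theory and are already implicit in the construction of \ref{eq:maptobarrot}, but deserve to be spelled out. There is also the minor point that \ref{lem:cohom_to_barr} was phrased for log schemes whereas $\tilde X,\tilde Y$ are algebraic stacks with log structure; this is handled exactly as elsewhere in this appendix (strict \'etale atlases, or Barrott's stacky theory) and does not affect the argument.
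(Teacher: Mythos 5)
Your argument is correct and is exactly the definition-unwinding the paper has in mind: the paper offers no proof at all, stating only that the lemma ``follows immediately'' from the construction of the comparison map \ref{eq:maptobarrot} and the preceding compatibility lemmas. Your chase through the cartesian square, using functoriality of restriction pullback, the compatibility of $w\mapsto w^{\dagger}$ with operational pullback, and the invariance of $\CH^*_\dagger$ under log modifications to conclude via injectivity of $q^*$, is the intended argument spelled out.
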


We do not know whether the map \ref{eq:maptobarrot} is injective or surjective. In this paper we work with the naive ring $\LogCH(X)$, but with this map and the compatibility lemma \oref{lem:pullbackcommutebarrot} our constructions and results transfer immediately to Barrott's setup of log Chow cohomology rings $\CH^*_\dagger(X)$. 

\section{Tropical gluing}
\label{sec:tropical}

In this section we will introduce \emph{tropical pointed curves}. These are the tropical versions of log pointed curves, and many of the properties enjoyed by log pointed curves have a tropical analogue. For example, there are tropical gluing maps and tropical evaluation maps. In this section we omit all proofs, and refer the interested reader to the logarithmic versions of the statements. 

We start by recalling the notion of a tropical unpointed curve. For more details we refer to \cite[Section~3.1]{Cavalieri2020A-Moduli-Stack}.

\begin{definition}
We denote by $G_{g,n}$ the set of graphs
\[
G = (V, H, L = (p_i)_{i = 1}^n \subset H, r: H \to V, i: H \to H, g: V \to \N)
\]
where
\begin{enumerate}
  \item $V$ is the set of vertices;
  \item $H$ is the set of half edges;
  \item $i$ is an involution on $H$.
  \item $r$ assigns to every half edge the vertex it is incident to. 
  \item $L \subset H$ is a list of the legs, the fixed points of the involution $i$, required to be of size $n$;
  \item $(V,H)$ is a connected graph;
  \item $g(v)$ is the genus of vertex $v$;
  \item the graph is stable, i.e. for every vertex $v$ we have $2g(v)-2 + n(v) > 0$ where $n(v)$ is the number of half edges incident to $v$;
  \item the total genus $\sum_{v \in V} g(v) + h^1(G)$ is $g$;
\end{enumerate}
We call this a genus $g$ graph with $n$ markings.
\end{definition}

Recall the category of rational polyhedral cones $\RPC$ and the category of rational polyhedral cone complexes $\RPCC$ from \cite[Section~2.1]{Cavalieri2020A-Moduli-Stack}. 

Now we can define the notion of a pointed tropical curve as in \cite{Cavalieri2020A-Moduli-Stack}.

\begin{definition}
Let $\sigma \in \RPC$ and let $P$ be the corresponding sharp dual monoid. An pointed tropical curve $\Gamma/\sigma$ of genus $g$ with $n$ markings consists of a graph $G \in G_{g,n}$ and a length function $\ell: H \setminus L \to P_{>0}$ invariant under the involution $i$. 
\end{definition}

This notion then automatically extends to a category over $\RPCC$, fibred in groupoids. This category is called $\Mbar_{g,n}^\trop$, and it is shown in \cite{Cavalieri2020A-Moduli-Stack} that this is a cone stack and has many similarities to the moduli stack of log curves.

Similar to the moduli stack of log curves, there is no notion of gluing for pointed tropical curves. However, as for log curves, one can define a notion of tropical pointed curves for which gluing does exist.

\begin{definition}
\label{def:tropicalpointed}
Let $\sigma \in \RPC$ and let $P$ be the corresponding sharp dual monoid. An tropical pointed curve $\Gamma/\sigma$ of genus $g$ with $n$ markings consists of a graph $G \in J_{g,n}$ and a $\ell: H \to P_{>0}$ invariant under the involution $i$. 
\end{definition}

This again immediately extends to a category over $\RPCC$ fibred in groupoids.

\begin{definition}
We let $\Mpt_{g,n}^\trop/\RPCC$ denote the cone stack of tropical pointed curves.
\end{definition}

\begin{remark}
Under the equivalence of cone stacks and Artin fans, $\Mpt_{g,n}^\trop$ is the Artin fan of $\Mptst_{g,n}$.
\end{remark}

\begin{remark}
Compare \Cref{def:tropicalpointed} with \Cref{def:ellpi}, where it is shown that for a log pointed curve the legs naturally have lengths whose values lie in the characteristic monoid of the base. In fact, imitating \cite[Section~7]{Cavalieri2020A-Moduli-Stack}, a stable log pointed curve over an algebraically closed field can be tropicalised to obtain a tropical pointed curve.
\end{remark}

\begin{proposition}
\label{prop:trop:pointedtimesrn}
There is a natural isomorphism of cone stacks \[\Mpt_{g,n}^\trop \cong \Mbar_{g,n}^\trop \times \R_{\geq 0}^n.\]
\end{proposition}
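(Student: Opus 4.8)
The plan is to read the isomorphism off the combinatorial presentations of the two cone stacks; morally it is the tropical shadow of the splitting of characteristic monoids $\ghost_{\Mptst_{g,n}} \cong \ghost_{\Mbar_{g,n}} \oplus \ul{\N}^n$ (cf.\ the remark following the computation of $\dim \ul{\Mpt}_{g,n}$) together with the fact, noted just above, that $\Mpt_{g,n}^\trop$ is the Artin fan of $\Mptst_{g,n}$.

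First I would recall from \cite{Cavalieri2020A-Moduli-Stack} that $\Mbar_{g,n}^\trop$ is presented as a colimit, over the category $\mathbf{G}_{g,n}$ whose objects are the stable $n$-marked genus $g$ graphs and whose morphisms are generated by graph isomorphisms and edge contractions, of the cones $\sigma_G \coloneqq \R_{\geq 0}^{E(G)}$, with transition maps the evident face inclusions. Since the $n$ legs are \emph{labelled}, a morphism of such graphs fixes each leg; in particular no leg is ever contracted, so the set of combinatorial types of tropical pointed curves (the $J_{g,n}$ of \ref{def:tropicalpointed}) coincides with $G_{g,n}$, and $\mathbf{G}_{g,n}$ is equally the indexing category for $\Mpt_{g,n}^\trop$.

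The key observation is then that a tropical pointed curve of combinatorial type $G$ is precisely a tropical curve of type $G$ together with a length in $\R_{\geq 0}$ for each of its $n$ legs, so the cone of $\Mpt_{g,n}^\trop$ attached to $G$ is $\sigma_G \times \R_{\geq 0}^n$, and by the previous paragraph every transition map of the resulting diagram acts as a face inclusion on the first factor and as the identity on the second, while graph isomorphisms act trivially on the second factor. I would then exhibit the isomorphism directly: forgetting the leg-lengths gives a morphism of cone stacks $\Mpt_{g,n}^\trop \to \Mbar_{g,n}^\trop$, the $n$-tuple of leg-lengths gives a morphism $\Mpt_{g,n}^\trop \to \R_{\geq 0}^n$, and the resulting morphism $\Mpt_{g,n}^\trop \to \Mbar_{g,n}^\trop \times \R_{\geq 0}^n$ is an isomorphism because on each cone of the presentation it is the tautological identity $\sigma_G \times \R_{\geq 0}^n = \sigma_G \times \R_{\geq 0}^n$. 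Equivalently, since the factor $\R_{\geq 0}^n$ is constant over the connected diagram $\mathbf{G}_{g,n}$, it pulls out of the colimit, giving
\[
\Mpt_{g,n}^\trop \;=\; \colim_{G \in \mathbf{G}_{g,n}} \bigl( \sigma_G \times \R_{\geq 0}^n \bigr) \;\cong\; \Bigl( \colim_{G \in \mathbf{G}_{g,n}} \sigma_G \Bigr) \times \R_{\geq 0}^n \;=\; \Mbar_{g,n}^\trop \times \R_{\geq 0}^n .
\]

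The main obstacle --- really the only point where something must be verified rather than merely assembled --- is the matching-up of the two combinatorial presentations: one must be sure that equipping the legs with lengths neither enlarges the poset of cones nor alters the face and gluing maps between them. This is immediate once one notes that legs are labelled, hence never contracted, and impose no stability condition of their own beyond that already carried by the underlying graph; I therefore expect this step to be routine.
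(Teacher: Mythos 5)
Your argument is correct and is essentially the proof the paper intends: Appendix B explicitly omits all proofs, and the statement is meant to follow from exactly the combinatorial observation you make --- the indexing category of stable graphs is unchanged because labelled legs are never contracted, so each cone just acquires a constant extra factor $\R_{\geq 0}^n$ of leg lengths that pulls out of the colimit, mirroring $\ghost_{\Mpt_{g,n}} = \ghost_{\Mbar_{g,n}} \oplus \N^n$ on the log side. The only point worth noting is that \ref{def:tropicalpointed} literally asks for leg lengths in $P_{>0}$; for the isomorphism to hit all of $\R_{\geq 0}^n$ rather than $\R_{>0}^n$ one must read the definition as permitting leg length $0$ (as the paper itself does in \ref{subsub:secondminimal}, where it considers legs of length $0$), which is the reading you have implicitly and correctly adopted.
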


From now on we use tropical curve to mean tropical pointed curve. We can now see that we can glue tropical curves, as per the following definition.

\begin{definition}[Cf. \Cref{sec:sewing}]
\label{def:trop:gluing}
Let $\Gamma/\sigma$ be a (not necessarily connected) tropical curve and let $p_1, p_2 \in H$ be two legs. Let $G$ denote the graph of $\Gamma$ and let $G^\gl$ denote the graph $G$ where $p_1, p_2$ have been removed and been replaced by two half-edges $h_1,h_2$ with $i(h_1) = h_2$. We define the \emph{gluing} $\Gamma^\gl$ to be be the tropical curve with underlying graph $G^\gl$ and length function $\ell$ given by $\ell(h_1) = \ell(h_2) = \ell(p_1) + \ell(p_2)$, and all other lengths equal to those of $\Gamma$.
\end{definition}

By applying this to either the disjoint union of two tropical curves, or to a single tropical curve, we get the following tropical gluing maps.

\begin{definition}[Cf. \Cref{sec:sewing:sep}]
The gluing construction \Cref{def:trop:gluing} defines natural gluing maps
\[
\Mpt_{g_1,n_1+1}^\trop \times \Mpt_{g_2,n_2+1}^\trop \to \Mpt_{g_1+g_2,n_1+n_2}^\trop
\]
and 
\[
\Mpt_{g-1,n+2}^\trop \to \Mpt_{g,n}^\trop.
\]
\end{definition}

Now we will discuss the other benefits of tropical pointed curves, namely evaluation maps and the ability to glue maps. To do this, we first recall that any unpointed tropical curve $\Gamma/\sigma$ can be realised as a relative cone complex of relative dimension $1$. One can realise pointed tropical curves either by realising the legs as having length $0$ or length $\infty$, both with their advantages and disadvantages. For tropical pointed curves, we have the following notion of realisation.

\begin{definition}
Let $\Gamma/\sigma$ be a tropical curve with $n$ marked points. We write $\Gamma_{\R}/\sigma$ for the relative cone complex of relative dimension $1$ containing one copy of $\sigma$ for every vertex, and for every half leg $h$ up to the involution, one cone of relative dimension $1$ over $\sigma$ such that the fiber over $x \in \sigma$ has length $\ell(h)(x)$. For $i = 1, \dots, n$ we write $p_i\colon \sigma \to \Gamma_{\R}$ for the map that over every fiber, sends $s \in \sigma$ to the endpoint of leg $i$ in $\Gamma_{\R}|_{s}$.
\end{definition}

Now we can state the existence of evaluation maps and the universal property of the gluing.

\begin{definition}[Cf. \Cref{subsec:evaluation}]
Let $X$ be a cone complex. Let $\Gamma/\sigma$ be a tropical curve, and let $p_1, \dots, p_n$ be the legs. For $X$ a cone complex and $f\colon \Gamma_{\R} \to X$ a map of cone complexes, we write $f(p_i)$ for the composition $f \circ p_i\colon \sigma \to X$.
\end{definition}

\begin{theorem}[Cf. \Cref{thm:universalpropertygluing}]
Let $X$ be a cone complex. Let $\Gamma/\sigma$ be a (not necessarily connected) tropical curve, and let $p_1, p_2$ be two legs. Then there is a natural bijection
\begin{equation}
\Hom(\Gamma_{\R}^{\gl}, X) \isom \left\{f \in \Hom(\Gamma_{\R}, X) : \begin{aligned}&f(p_1) = f(p_2) \text{ and the slopes} \\& \text{of } f \text{ along } p_1, p_2 \text{ add up to } 0\end{aligned}\right\}.
\end{equation}
\end{theorem}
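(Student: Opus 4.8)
The plan is to mirror the proof of \ref{thm:universalpropertygluing}: realise $\Gamma_\R^\gl$ as a straightening of a pushout of cone complexes and then read off two successive bijections. (Alternatively, one can deduce the statement formally from \ref{thm:universalpropertygluing} via the dictionary between log schemes over geometric log points and cone complexes, but I will describe the direct argument.)

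First I would realise the glued curve as a colimit. Writing $p_1 \sqcup p_2\colon \sigma \sqcup \sigma \to \Gamma_\R$ for the two endpoint sections and $\nabla\colon \sigma \sqcup \sigma \to \sigma$ for the fold map, form the pushout $\widetilde{\Gamma}_\R := \Gamma_\R \sqcup_{\sigma \sqcup \sigma} \sigma$ of (relative, generalised) cone complexes. Concretely this identifies the endpoints of the legs $p_1$ and $p_2$, so that $\widetilde{\Gamma}_\R$ is obtained from the graph $G^\gl$ of \ref{def:trop:gluing} by inserting a two-valent vertex $w$ on the new edge, splitting it into pieces of lengths $\ell(p_1)$ and $\ell(p_2)$. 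By the universal property of the pushout, for any cone complex $X$ the map $\Gamma_\R \to \widetilde{\Gamma}_\R$ would induce a bijection between $\Hom(\widetilde{\Gamma}_\R, X)$ and the set of $f \in \Hom(\Gamma_\R, X)$ with $f(p_1) = f(p_2)$; this is the tropical analogue of the identification of $\Hom(\tilde C, X)$ with pre-gluing data used in \ref{thm:universalpropertygluing}.

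Next I would compare $\widetilde{\Gamma}_\R$ with $\Gamma_\R^\gl$. The realisation $\Gamma_\R^\gl$ carries a genuine straight edge of length $\ell(p_1)+\ell(p_2)$ where $\widetilde{\Gamma}_\R$ has the broken edge through $w$, and there is a natural collapsing map $\widetilde{\Gamma}_\R \to \Gamma_\R^\gl$. A morphism $f\colon \widetilde{\Gamma}_\R \to X$ factors, necessarily uniquely, through this map exactly when $f$ is linear across the two-valent vertex $w$, i.e.\ when the two one-sided slopes of $f$ at $w$ are opposite. With the orientation convention that after gluing one leg runs into $w$ and the other out of $w$, these two slopes are precisely the outgoing slopes of $f$ along the legs $p_1$ and $p_2$ of $\Gamma_\R$, so the factorisation condition is exactly that the slopes of $f$ along $p_1$ and $p_2$ add up to $0$. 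Composing the two bijections then yields the theorem, and naturality in $X$ (and in $\Gamma/\sigma$) is immediate from the colimit description.

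The step I expect to be the main obstacle is making sense of the pushout $\widetilde{\Gamma}_\R$: just as the pushout $\tilde C$ of \ref{sec:local_pushout} is only quasi-fine and not fs, a pushout of cone complexes along a ray need not be an honest (fs) cone complex. One resolves this either by working in a suitable category of generalised cone complexes / cone spaces in the sense of \cite{Cavalieri2020A-Moduli-Stack} and checking that this particular codimension-one pushout is representable there, or by sidestepping it entirely: transporting \ref{thm:universalpropertygluing} along the equivalence between the relevant categories of log structures over geometric log points and of cone complexes, under which $C^\gl$ corresponds to $\Gamma_\R^\gl$, the piercing to the endpoint realisation of the legs, and the two slope conditions match verbatim.
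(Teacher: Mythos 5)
The paper gives no proof of this statement at all --- Appendix~B explicitly says ``we omit all proofs'' and refers the reader to the logarithmic analogue \ref{thm:universalpropertygluing} --- and your argument is precisely the tropical mirror of that proof: the pushout identifying the two leg endpoints computes the pre-gluing data $\{f : f(p_1)=f(p_2)\}$, and restricting to maps whose slopes at the new two-valent vertex sum to zero recovers $\Hom(\Gamma_{\R}^{\gl},X)$, exactly as $\Hom(\tilde C, X)$ is cut down to $\Hom(C^\gl,X)$ in \ref{sec:local_pushout}. You also correctly identify the one genuine wrinkle (the pushout is only a generalised cone complex, just as $\tilde C$ is only quasi-fine rather than fs) and the transport-via-the-log-dictionary escape route, so this is essentially the intended argument.
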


\bibliographystyle{alpha} 
\bibliography{../prebib.bib}

\end{document}